\newcommand{\R}{\mathbb{R}}
\newcommand{\N}{\mathbb{N}}
\newcommand{\tu}{\widetilde{u}}
\newcommand{\hu}{\widehat{u}}
\newcommand\norm[1]{\left\lVert#1\right\rVert}
\newcommand{\Grad}{\nabla}
\newcommand{\Div}{\operatorname{div}}
\newcommand{\dom}{\Omega}
\newcommand{\Vh}{\mathbb{V}_h}
\newcommand{\Ph}{\mathbb{P}_h}
\newcommand{\Yh}{\mathbb{Y}_h}
\newcommand{\Uh}{\mathbb{U}_h}
\newcommand{\weak}{\rightharpoonup}
\newcommand{\weakstar}{\overset{\star}\rightharpoonup}
\newcommand{\PUh}{\mathcal{P}_{\Uh}}
\newcommand{\PPh}{\mathcal{P}_{\Ph}}
\newcommand{\PN}{\mathcal{P}_{N}}
\newcommand{\PVh}{\mathcal{P}_{\Vh}}
\newcommand{\Pl}{\mathcal{P}} 
\newtheorem{lemma}{Lemma}[section]
\newtheorem{theorem}[lemma]{Theorem}
\newtheorem{remark}[lemma]{Remark}
\newtheorem*{maintheorem*}{Main Theorem}
\theoremstyle{definition}{\newtheorem{definition}[lemma]{Definition}}
\numberwithin{equation}{section}
\title[Convergence Projection Method]{A convergence proof for a  finite element discretization of Chorin's projection method of the incompressible Navier-Stokes equations}
\date{\today}
\thanks{This work was supported in part by the U.S. Department of Energy, Office of Science, Office of Advanced Scientific Computing Research's Applied Mathematics Competitive Portfolios program under Contract No. AC02-05CH11231 and by National Science Foundation award DMS 2042454.}
\author[F. Weber]{Franziska Weber}
\address[Franziska Weber]{\newline Department of Mathematics \newline UC Berkeley \newline Evans Hall, Berkeley,  CA 94702, USA.}
\email[]{fweber@berkeley.edu}
    \subjclass[2020]{65M12; 65M60; 76D05; 76M10}
\begin{document}

\begin{abstract}
	We study Chorin’s projection method combined with a finite element spatial discretization for the time-dependent incompressible Navier–Stokes equations. The scheme advances the solution in two steps: a prediction step, which computes an intermediate velocity field that is generally not divergence-free, and a projection step, which enforces (approximate) incompressibility by projecting this velocity onto the (approximately) divergence-free subspace.
	We establish convergence, up to a subsequence, of the numerical approximations generated by this scheme to a Leray–Hopf weak solution of the Navier–Stokes equations, without any additional regularity assumptions beyond square-integrable initial data. A discrete energy inequality yields a priori estimates, which we combine with a new compactness result to prove precompactness of the approximations in $L^2([0,T]\times\dom)$, where $[0,T]$ is the time interval and $\dom$ is the spatial domain. Passing to the limit as the discretization parameters vanish, we obtain a weak solution of the Navier–Stokes equations.
	A central difficulty is that different a priori bounds are available for the intermediate and projected velocity fields; our compactness argument carefully integrates these estimates to complete the convergence proof.

\end{abstract}

\maketitle

\section{Introduction}

We consider the incompressible Navier–Stokes equations describing the motion of a homogeneous viscous fluid in a bounded domain $\dom \subset \R^d$, $d=2,3$, over a finite time interval $[0,T]$:
\begin{equation}
	\label{eq:NS}
	\begin{split}
		\partial_t u + (u\cdot\nabla)u + \nabla p &= \mu \Delta u + f,\quad (t,x)\in [0,T]\times\Omega,\\
		\Div u & = 0,\quad (t,x)\in [0,T]\times\Omega,
	\end{split}
\end{equation}
where $u=u(t,x):[0,T]\times\Omega \to \mathbb{R}^d$ is the fluid velocity, $p=p(t,x):[0,T]\times\Omega\to \mathbb{R}$ is the pressure, and $f:[0,T]\times\Omega\to \mathbb{R}^d$ is a given forcing term. The domain $\Omega\subset\mathbb{R}^d$, $d=2,3$, is a bounded Lipschitz domain, $\mu>0$ is the kinematic viscosity (inversely proportional to the Reynolds number), and $T>0$ is a fixed final time. We impose homogeneous Dirichlet boundary conditions on $u$ and prescribe initial data
\begin{equation*}
	u(0,\cdot)=u_0\in L^2_{\text{div}}(\dom),
\end{equation*} 
where $L^2_{\mathrm{div}}(\Omega)$ denotes the subspace of square-integrable divergence-free vector fields with vanishing normal trace on $\partial\Omega$. We assume $f\in L^2(0,T;L^2_{\mathrm{div}}(\Omega))$, though the analysis can likely be extended to $f\in L^2(0,T;(H^1_{\mathrm{div}}(\Omega))^*)$ with a careful treatment of the forcing approximation in the finite element space.

The mathematical and numerical analysis of the Navier–Stokes system continues to be a central focus in computational fluid dynamics, owing both to its fundamental role in engineering and physics and to the profound mathematical difficulties it poses.
	From a numerical perspective, the key difficulty lies in resolving the pressure–velocity coupling while ensuring the discrete velocity field satisfies the divergence-free condition, or an approximate version of it, crucial for physical fidelity and stability.
	A widely used approach to this problem are projection methods, first introduced by Chorin~\cite{Chorin1967,Chorin1968} and Temam~\cite{Temam1969}, which decouple the velocity and pressure computations by a fractional-step time discretization. At each time step, one first solves a nonlinear convection–diffusion problem to compute an intermediate velocity $\tu^{m+1}$ that does not necessarily satisfy incompressibility, followed by a projection step that enforces the divergence-free constraint to yield the corrected velocity $u^{m+1}$. This approach significantly simplifies the numerical implementation and reduces computational cost compared to fully coupled solvers.
	More precisely, the original Chorin projection scheme~\cite{Chorin1967,Chorin1968} consists of a prediction step
	\begin{equation}
			\label{eq:chorinprediction}
		\frac{\tu^{m+1}-u^m}{\Delta t}+(u^m\cdot \nabla)\tu^{m+1} = \mu \Delta \tu^{m+1} + f^m,
	\end{equation}
		followed by a projection step solving
	\begin{equation}
			\label{eq:projectioncontinuous1}
		\frac{u^{m+1}-\tu^{m+1}}{\Delta t}+\nabla p^{m+1} = 0,\quad
		\Div u^{m+1} = 0.
	\end{equation}
	An important variant is the incremental projection method~\cite{Goda1979,vanKan1986,Guermond1996,Shen1996}, which introduces pressure increments in the prediction step to improve the pressure accuracy:
		\begin{equation}
			\label{eq:incrementalprediction}
		\frac{\tu^{m+1}-u^m}{\Delta t}+(u^m\cdot \nabla)\tu^{m+1} +\nabla p^m = \mu \Delta \tu^{m+1} + f^m,
		\end{equation}
			with the projection
	\begin{equation}
			\label{eq:projectioncontinuous2}
		\frac{u^{m+1}-\tu^{m+1}}{\Delta t}+\nabla (p^{m+1} - p^m) = 0,\quad
	\Div u^{m+1} = 0.
	\end{equation}
	These projection methods have enjoyed enduring popularity in both practical computations and theoretical studies due to their conceptual simplicity and modularity. They allow for separate solvers for velocity and pressure and have been adapted to various spatial discretizations, including finite differences, finite volumes, and finite elements.
	
	Despite their long history, rigorous convergence analyses of projection methods to weak solutions of the Navier–Stokes equations, particularly in fully discrete settings and for nonsmooth data, have only recently gained traction. Classical analyses often assume high regularity of the exact solution or are limited to semi-discrete (time-discrete) schemes~\cite{Guermond1996,Guermond1998,Hou1992,Hou1993,Achdou2000,E2002,Shen1996,Temam1968,Temam1977,Guermond2006}. In three spatial dimensions, where uniqueness and regularity are open problems, convergence to a Leray–Hopf weak solution (existence is guaranteed by Leray~\cite{Leray1934} and Hopf~\cite{Hopf1951}) is the natural benchmark.
	
	Fully discrete convergence results have been established over the last few years for finite difference schemes on arbitrary Lipschitz domains~\cite{Kuroki2020,Maeda2022}, and for finite volume methods on rectangular domains~\cite{Gallouet2023}. More recently, Eymard and Maltese~\cite{Eymard2024} proved convergence of an incremental projection finite element method with Poisson formulation of the projection step under the assumption of conforming finite element spaces, relying on a careful construction of a specialized interpolation operator. However, no convergence proof existed so far for the original Chorin projection method discretized via finite elements using the Darcy-type projection formulation. 
	
	Our work fills this gap by providing a proof of convergence for a fully discrete finite element scheme based on the original Chorin projection method with a Darcy formulation of the projection step. Our analysis covers Lipschitz domains in two and three dimensions with minimal regularity assumptions on the initial data and forcing. 
	The main difficulty arises from the lack of sufficient bounds on the pressure approximation and the intermediate velocity $\tu$ not being divergence free. So we can only obtain a bound on the time derivative of the approximation of $\tu$ in the dual space of $H^1_{\Div}\cap H^k(\dom)$ (where $k\in \N$ is sufficiently large and $H^1_{\Div}(\dom)$ denotes the space of almost everywhere divergence free $H^1_0(\dom)$ functions). Therefore, classical compactness results such as the Aubin-Lions-Simon lemma cannot be applied directly. To overcome this, we take a new approach to precompactness that carefully combines the a priori estimates on the approximation of the intermediate velocity $\tu$ with the approximately divergence free property of the approximation of the corrected velocity $u$. This way, we achieve strong precompactness in $L^2([0,T]\times\dom)$ for the approximations of $\tu$ and $u$.
	This new approach represents the core technical advance of our paper and could be of independent interest for other fractional step methods or multiphysics systems with split variables. Utilizing this tool, we show convergence (up to a subsequence) of the finite element approximations to a Leray–Hopf weak solution of the Navier–Stokes equations as both the spatial mesh size and time step tend to zero.
	
	From a numerical perspective, the Darcy formulation of the projection step we employ is attractive due to its direct variational form and compatibility with mixed finite element spaces. It is also convenient for coupling with porous media flow or multiphase flow solvers where Darcy equations naturally arise. Our analysis thus opens the door to rigorously justified finite element simulations of unsteady incompressible flow using this formulation, which had not been previously analyzed in the case of non-smooth solutions.
	We also revisit the incremental projection scheme analyzed in~\cite{Eymard2024}, showing that the convergence proof can be simplified and generalized to a wider class of (higher-order) finite element pairs without the need for specialized interpolation operators. This makes the approach more accessible and broadens its applicability. 
	
	Finally, it is important to emphasize that all projection methods considered here are first order in time. While they remain widely used due to their simplicity and robustness, extending rigorous convergence proofs for possibly non-smooth solutions to higher-order-in-time projection methods, such as for example~\cite{Bell1989}, remains an open and challenging problem, especially in fully discrete contexts. Such extensions would be highly valuable for increasing accuracy in long-time simulations of incompressible flows.
	
	The remainder of this paper is organized as follows. In Section~\ref{sec:prelim}, we introduce the necessary notation and recall the notion of Leray–Hopf weak solutions. Section~\ref{sec:num} presents the fully discrete finite element scheme for the Chorin projection method and establishes key a priori estimates. Section~\ref{sec:convergence} contains the proof of convergence to a weak solution. 
	In Section~\ref{sec:poisson}, we adapt the convergence proof of the incremental projection scheme from~\cite{Eymard2024} to a more general setting. Auxiliary results are collected in the Appendix.
	
\section{Preliminaries}\label{sec:prelim}
We start by introducing notation that will be used frequently in the following, and the definition of Leray-Hopf solutions for~\eqref{eq:NS}.

\subsection{Notation}
We denote the norm of a Banach space $X$ as $\|\cdot\|_X$ and its dual space by $X^*$.  We will denote $L^p$ spaces (for example, $L^2(\dom)$ for square integrable functions defined over $\dom$), Sobolev spaces and Bochner spaces in standard ways, and will not distinguish between scalar, vector-valued and tensor-valued function spaces when it is clear from the context. In particular, we use $L^p(0,T; X)$ to denote the space of functions $f:[0,T]\to X$ which are $L^p$-integrable in the time variable $t\in [0,T]$ and take values in   $X$.  The inner product on $L^2$ will be denoted by $(\cdot, \cdot)$.

We denote for a vector field $u:\dom\to\R^d$  the divergence $\Div u = \sum_{j=1}^d \partial_j u^j$ and the gradient $\Grad u = (\partial_i u^j)_{i,j=1}^d$.

We will use the subscript $\Div$  to indicate the divergence-free vector spaces, for example, 
\begin{equation*}
\label{eq:sigma_space}
\begin{aligned}
C_{c,\Div}^\infty(\dom)=\{{\phi}\in C_c^\infty(\dom); \Div {\phi}=0\},&\quad L^2_{\Div}(\dom)=\{ {\phi}\in L^2(\dom):\Div  {\phi}=0,  {\phi}\cdot {n}|_{\partial\dom}=0\}=\overline{C_{c,\Div}^\infty(\dom)}^{L^2(\dom)},\\
&\quad H^1_{ \Div}(\dom)=H^1_0(\dom)\cap L^2_{\Div}(\dom),
\end{aligned}
\end{equation*}
where we used $n$ to denote the outward normal vector of the domain $\dom$.
\begin{equation*}
	L^2_0(\dom)= \{\phi\in L^2(\dom); \, \int_{\dom} \phi dx =0\}
\end{equation*}
will be used to denote the space of $L^2$-functions that have zero average over the domain.
We denote the Leray projector by $\Pl: L^2(\dom)\to L^2_{\Div}(\dom)$, which is an orthogonal projection induced by the Helmholtz-Hodge decomposition $ {f}=\nabla g+ {h}$ for any $ {f}\in L^2(\dom)$~\cite{Temam1977}. Here, $g\in H^1(\dom)$ is a scalar field, and $ {h}\in L^2_{\Div}(\dom)$ is a divergence-free vector field. This decomposition is unique in $L^2(\dom)$. Then for all $ {f}\in L^2(\dom)$, it holds that $\mathcal{P}{f}={h}$.  

Next, we define  Leray-Hopf solutions for the incompressible Navier-Stokes equations which were introduced by Leray~\cite{Leray1934} for $\dom=\R^d$ and Hopf~\cite{Hopf1951} for bounded domains. 
\begin{definition}[Leray-Hopf solutions of~\eqref{eq:NS}]
	\label{def:weaksol}
Assume that $u_0\in L^2_{\Div}(\dom)$ and $f\in L^2(0,T;L^2_{\Div}(\dom))$.
	Let $u:[0,T]\times\dom\to \R^d$ be a vector field that is divergence-free, i.e., $\Div u=0$ for almost every $(t,x)\in [0,T]\times\dom$, and that satisfies
	\begin{equation}
	\label{eq:regularity}
\begin{split}
	&u\in L^\infty(0,T;L^2_{\Div}(\dom))\cap L^2(0,T;H^1_{\Div}(\dom)), \\
	&\partial_t u \in L^{4/3}(0,T;(H^1_{\Div}(\dom))^*), 
\end{split}
	\end{equation}
	and 
	\begin{equation}
	\label{eq:initdata}
	u(0,x) = u_0(x),\quad \text{a.e. }\, (t,x)\in [0,T]\times\dom.
	\end{equation}
Assume that $u$ satisfies 
 the distributional version of~\eqref{eq:NS}:
		\begin{equation}
		\label{eq:weakformu}
 \int_0^T  ({u},\partial_t v) dt +\int_0^T\int_\dom ((u\cdot\Grad )v) \cdot u dxdt
		=-\int_0^T (f,v) dt+\mu\int_0^T  (\Grad u,\Grad v )dt,
		\end{equation}	
for test functions $v\in C_c^\infty((0,T);C^\infty_{c,\Div}(\dom))$.
Moreover, let $u$ satisfy the energy inequality:
\begin{equation}
\label{eq:energyineq}
\frac12\norm{u(t)}_{L^2(\dom)}^2  +\mu\int_0^t\norm{\Grad u(s)}_{L^2}^2 ds
\leq \frac12\norm{u_0}_{L^2(\dom)}^2 + \int_0^t (f,u) ds,
\end{equation}
for a.e. $t\in [0,T]$ and be continuous at zero in $L^2(\dom)$, i.e.,
\begin{equation*}
	\lim_{t\to 0} \norm{u(t)-u_0}_{L^2(\dom)} = 0.
\end{equation*}
Then we call $u$ a Leray-Hopf solution of~\eqref{eq:NS}.
\end{definition}

\section{Numerical scheme}\label{sec:num}
Our scheme will be based on a linearly implicit discretization in time and LBB stable finite elements in space. To deal with the incompressiblility constraint, we will use a version of the projection method introduced by Chorin and Temam~\cite{Chorin1967,Chorin1968,Temam1969,Temam1977}. We start by describing the time discretization.

\subsection{Time discretization} 
We use a   linearly implicit time discretization based on Chorin's projection method~\cite{Chorin1967,Chorin1968}. This choice enables us to retain a discrete version of the energy inequality~\eqref{eq:energyineq}.
The idea of the projection method (or fractional step method) for the incompressible Navier-Stokes equations is to split the evolution of the Navier-Stokes equations~\eqref{eq:NS} into two steps: In the first step the velocity field is evolved according to the convection and the dissipation terms which may violate the divergence constraint. An intermediate velocity field $\tu$ is computed. In the second step the intermediate velocity field is projected onto divergence free fields. 

Specifically, we let $N\in \N$ the number of time levels and   discretize the time interval $[0,T]$ into time levels $t^m=m\Delta t$, $m=0,1,\dots, N$ where $\Delta t = T/N$. At each time level $t^m$, we seek discrete approximations $u^m(x)$ and $p^m(x)$ to $u(t^m,x)$ and $ p(t^m,x)$. We denote the initial data $(u^0, p^0)=(u_0,0)$. Then at every time level $m= 0,\dots, M-1$, given $(u^m,p^m)$,  we update $(u^{m+1}, p^{m+1})$ according to the following two steps:
\begin{enumerate}
	\item[{\bf Step 1}] (Prediction step): We compute $\tu^{m+1}$ through
		
		\begin{equation}\label{eq:step1semi}
		\frac{\tu^{m+1}-u^m}{\Delta t}+ (u^m\cdot\Grad)\tu^{m+1}= \mu \Delta \tu^{m+1}+f^{m}
		\end{equation}
	with boundary conditions 
	\begin{equation*}
	\left.\tu^{m+1}\right|_{\partial\dom}=0,
	\end{equation*}
	and where we have defined
	\begin{equation*}
		f^m(x)=\frac{1}{\Delta t}\int_{t^m}^{t^{m+1}}f(s,x) ds
	\end{equation*}
	\item[{\bf Step 2}] (Projection step): Next we define $(u^{m+1},p^{m+1})$ via
	\begin{subequations}
	\label{eq:projection}
	\begin{align}
	\frac{u^{m+1}-\tu^{m+1}}{\Delta t }&= -\Grad p^{m+1},\\
	\Div u^{m+1} & = 0.
		\end{align}	
	\end{subequations}
	with boundary condition
	\begin{equation}
	\label{eq:bcstep2}
	\left.u^{m+1}\cdot n\right|_{\partial \dom}=0.
	\end{equation}
\end{enumerate}
\begin{lemma}
	\label{thm:Solvability_scheme}
	Given an initial approximation $ u^0 \in H^1_0(\Omega) $,
	the numerical scheme \eqref{eq:step1semi}--\eqref{eq:projection} can be solved iteratively with $u^m\in H^1(\Omega)$ for every $m\in\mathbb{Z}$. Moreover, the numerical scheme~\eqref{eq:step1semi}--\eqref{eq:projection} is unconditionally energy stable and satisfies the semi-discrete energy dissipation law:
	\begin{multline}
	\label{eq:basic_semi-discrete_energy_law}
	\frac{1}{2}\norm{u^{M}}^2_{L^2}+\frac{1}{2}\sum_{m=0}^{M-1}\norm{u^{m+1}-\tu^{m+1}}_{L^2}^2
	+\frac{1}{2}\sum_{m=0}^{M-1}\norm{\tu^{m+1}-u^m}_{L^2}^2
	+\mu \sum_{m=0}^{M-1}\norm{\nabla\tu^{m+1}}_{L^2}^2\Delta t\\
	=\frac{1}{2}\norm{u^{0}}_{L^2}^2+\Delta t \sum_{m=0}^{M-1}(f^m,\tu^{m+1}),
	\end{multline}
	for all integers $M\in \left\{0,1,\dots,\frac{T}{\Delta t}\right\}$. 
\end{lemma}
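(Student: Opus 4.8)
The statement splits into solvability of the two substeps and the energy identity; I would establish solvability first and then derive \eqref{eq:basic_semi-discrete_energy_law} as a purely algebraic consequence of the two defining relations. For solvability I would induct on $m$, carrying the hypothesis that $u^m$ is divergence free with vanishing normal trace (true at $m=0$ since $u^0=u_0\in L^2_{\Div}(\dom)\cap H^1_0(\dom)$). Writing the prediction step \eqref{eq:step1semi} weakly, I seek $\tu^{m+1}\in H^1_0(\dom)$ with
\begin{equation*}
(\tu^{m+1},v)+\Delta t\,((u^m\cdot\Grad)\tu^{m+1},v)+\mu\,\Delta t\,(\Grad\tu^{m+1},\Grad v)=(u^m,v)+\Delta t\,(f^m,v)
\end{equation*}
for all $v\in H^1_0(\dom)$, and invoke the Lax--Milgram theorem. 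Boundedness of the bilinear form $a(w,v)=(w,v)+\Delta t\,((u^m\cdot\Grad)w,v)+\mu\,\Delta t\,(\Grad w,\Grad v)$ follows from Cauchy--Schwarz together with the Sobolev embedding $H^1_0(\dom)\hookrightarrow L^6(\dom)$ (valid for $d\le 3$), which controls the convection term by $\norm{u^m}_{L^3}\,\norm{\Grad w}_{L^2}\,\norm{v}_{L^6}$. Coercivity is exactly where the divergence-free hypothesis enters: for $v\in H^1_0(\dom)$ an integration by parts gives $((u^m\cdot\Grad)v,v)=-\tfrac12\int_\dom(\Div u^m)|v|^2\,dx=0$, so $a(v,v)=\norm{v}_{L^2}^2+\mu\,\Delta t\,\norm{\Grad v}_{L^2}^2$, which is coercive by Poincaré \emph{for any} $\Delta t>0$. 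The projection step \eqref{eq:projection} I would read as the Helmholtz--Hodge decomposition of $\tu^{m+1}$: set $u^{m+1}=\Pl\tu^{m+1}\in L^2_{\Div}(\dom)$ and recover $p^{m+1}$ from $\Delta t\,\Grad p^{m+1}=\tu^{m+1}-u^{m+1}$, which preserves the induction hypothesis.

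For the energy law I would test the prediction equation with $\tu^{m+1}$ itself. The convection term vanishes by the same cancellation, the Laplacian integrates by parts to $-\mu\norm{\Grad\tu^{m+1}}_{L^2}^2$, and the discrete time derivative is handled by the polarization identity $(a-b,a)=\tfrac12(\norm{a}^2-\norm{b}^2+\norm{a-b}^2)$ with $a=\tu^{m+1}$, $b=u^m$. Multiplying by $\Delta t$ yields the per-step relation
\begin{equation*}
\tfrac12\norm{\tu^{m+1}}_{L^2}^2-\tfrac12\norm{u^m}_{L^2}^2+\tfrac12\norm{\tu^{m+1}-u^m}_{L^2}^2+\mu\,\Delta t\,\norm{\Grad\tu^{m+1}}_{L^2}^2=\Delta t\,(f^m,\tu^{m+1}).
\end{equation*}
The projection step contributes the decisive orthogonality: since $u^{m+1}$ is divergence free with vanishing normal trace while $u^{m+1}-\tu^{m+1}=-\Delta t\,\Grad p^{m+1}$ is a gradient, these fields are $L^2$-orthogonal, so $(u^{m+1},u^{m+1}-\tu^{m+1})=0$ and the Pythagorean identity gives $\norm{\tu^{m+1}}_{L^2}^2=\norm{u^{m+1}}_{L^2}^2+\norm{u^{m+1}-\tu^{m+1}}_{L^2}^2$. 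Substituting this replaces $\tfrac12\norm{\tu^{m+1}}_{L^2}^2$ in the per-step relation by $\tfrac12\norm{u^{m+1}}_{L^2}^2+\tfrac12\norm{u^{m+1}-\tu^{m+1}}_{L^2}^2$, leaving a telescoping leading pair $\tfrac12\norm{u^{m+1}}_{L^2}^2-\tfrac12\norm{u^m}_{L^2}^2$.

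Summing over $m=0,\dots,M-1$ then telescopes these leading terms to $\tfrac12\norm{u^M}_{L^2}^2-\tfrac12\norm{u^0}_{L^2}^2$ and reproduces \eqref{eq:basic_semi-discrete_energy_law} verbatim; since no constraint on $\Delta t$ was ever imposed, unconditional stability is immediate. The algebra is routine, so the one genuinely delicate point is the regularity bookkeeping that closes the solvability induction on a Lipschitz domain: the convection coefficient of the next prediction step needs $u^{m+1}$ in at least $L^3(\dom)$, yet the Leray projection of an $H^1$ field need not be $H^1$ when $H^2$ elliptic regularity for the underlying Neumann/Helmholtz problem fails. I would resolve this either by appealing to $L^p$-boundedness of the Helmholtz projection to retain $u^{m+1}\in L^p(\dom)$ with $p>2$ (enough to define the trilinear form), or by reading the $H^1$ assertion at the level of the finite dimensional finite element spaces of the convergence analysis, where solvability and all norm bounds hold automatically; note that the energy identity above requires only $u^{m+1}\in L^2(\dom)$ and is therefore unaffected by this subtlety.
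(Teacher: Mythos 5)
Your proposal is correct and follows essentially the same route as the paper, which omits the semi-discrete proof and points to the fully discrete one: your test with $\tu^{m+1}$ plus the polarization identity matches the paper's derivation of~\eqref{eq:heart}, your Helmholtz--Hodge orthogonality $(u^{m+1},u^{m+1}-\tu^{m+1})=0$ is exactly the continuous analogue of the paper's step of testing~\eqref{eq:projection1} with $u^{m+1}_h$, and your use of $\Div u^m=0$ for coercivity is the natural replacement for the skew-symmetry of $b$, which the semi-discrete scheme~\eqref{eq:step1semi} does not build in. Your closing remark is a genuine value-add rather than a flaw: the lemma's assertion $u^m\in H^1(\Omega)$ is indeed delicate on a general Lipschitz domain, since $\Pl$ need not preserve $H^1$ there, and your fallback via $L^p$-boundedness of the Helmholtz projection (giving $u^{m+1}\in L^3(\dom)$ from $\tu^{m+1}\in H^1_0(\dom)\subset L^6(\dom)$) is exactly what is needed to keep the convective form bounded and close the induction, while the energy identity~\eqref{eq:basic_semi-discrete_energy_law} needs only $L^2$ and is untouched.
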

We omit the proof as it is very similar to the upcoming proof  for the fully discrete scheme. 
\begin{remark}
	Using Gr\"onwall's inequality one can convert this to an energy bound, however with a constant that grows exponentially with time, i.e.,
	\begin{multline}
		\label{eq:basic_semi-discrete_energy_lawgr}
		\frac{1}{2}\norm{u^{M}}^2_{L^2}+\frac{1}{2}\sum_{m=0}^{M-1}\norm{u^{m+1}-\tu^{m+1}}_{L^2}^2
			+\frac{1}{2}\sum_{m=0}^{M-1}\norm{\tu^{m+1}-u^m}_{L^2}^2 \\
			+\mu \sum_{m=0}^{M-1}\norm{\nabla\tu^{m+1}}_{L^2}^2\Delta t  \leq C e^{c t^M} 	\left(\frac{1}{2}\norm{u^{0}}^2_{L^2}+\frac{\Delta t}{2} \sum_{m=0}^{M-1}\norm{f^m}_{L^2}^2\right),
		\end{multline}
		for constants $c,C>0$, c.f. Remark~\ref{rem:fneq0} below for the details on how to derive it. 
	\end{remark}
\subsection{Spatial discretization}\label{sec:spatial}
Next we describe the spatial discretization for the problem. 
We let $\mathcal{T}_h=\{K\}$ be families of conforming quasi-uniform triangulations of $\dom$ made of simplices with mesh size $h>0$. Here $h=\max_{K\in \mathcal{T}_h}\text{diam}(K)$. For simplicity, we assume that $\dom=\dom_h$, so that there is no geometric error caused by the domain approximation. We let $\Uh\subset H^1_0(\dom)^d$, and $\Ph\subset L^2_0(\dom)\cap H^1(\dom)$  be finite dimensional subspaces scaled by a meshsize $h>0$ that we will use for the spatial approximation of the velocity, and the pressure  respectively.  
The finite element spaces $\Uh$ and $\Ph$ need to satisfy the following two requirements: First, we require the $L^2$-orthgonal projections 
$\PUh:L^2(\dom)^d\to \Uh$, $\PPh:L^2(\dom)\to \Ph$ defined by
\begin{equation}\label{eq:L2projection}
	(u,\PUh v) = (u,v),\quad \forall \, u\in \Uh, \quad 	(p,\PUh q) = (p,q),\quad \forall \, p\in \Ph,
\end{equation}
to be stable on $H^1$ and $L^r$, $1\leq r\leq \infty$, and have  good approximation properties. In particular, we need for some constant $C>0$, independent of the mesh size $h>0$,
\begin{subequations}
	\label{eq:L2projproperties}
	\begin{align}
		\norm{\PUh v}_{L^r(\dom)}\leq C\norm{v}_{L^r(\dom)},&\quad \forall v \in L^r(\dom),\, r\in [1,\infty],\label{eq:Lr}\\
		\left|\PUh v\right|_{H^1(\dom)}\leq C\left|v\right|_{H^1(\dom)},&\quad \forall v\in H^1(\dom),\label{eq:H1}\\
		\norm{\PUh v - v}_{L^2(\dom)}\leq C h^s \norm{v}_{H^s(\dom)},&\quad \forall v\in H^s(\dom),\, s\in \left(\frac12,k+1\right],\label{eq:L2approx}\\
			\left|v-\PUh v\right|_{H^1(\dom)}\leq C h^{s-1} |v|_{H^s(\dom)},&\quad \forall v\in H^s(\dom),\, s\in [1,k+1],\label{eq:H1approx}
	\end{align}
\end{subequations}
where $k\in \N$ is the order of the finite element space, and we require the same properties for $\PPh$ for a possibly different $k$. Under the assumption of quasi-uniformity of the meshes,~\eqref{eq:L2projproperties} can be achieved for finite elements based on piecewise polynomial spaces such as
\begin{equation*}
	\Uh = \{v\in H^1_0(\dom)\, | \, \left.v\right|_K\in P_k(K),\, \forall \, K\in \mathcal{T}_h \},
\end{equation*}
where $P_k(K)$ is the space of polynomials of degreee $\leq k\in \N$.  See~\cite{Douglas1975} for~\eqref{eq:Lr},~\cite[Prop 22.19]{Ern2021} for~\eqref{eq:L2approx} and~\cite[Prop. 22.21]{Ern2021} for~\eqref{eq:H1} and~\eqref{eq:H1approx}. The condition of quasi-uniformity can be weakend in some cases, see Remark 22.23 in~\cite{Ern2021}.

Second, we assume that the pair $(\Uh,\Ph)$ satisfies the LBB (Ladyzhenskaya–Babu\v{s}ka–Brezzi) condition~\cite{Boffi2013}, meaning there exists $\beta^*>0$ independent of the mesh size $h>0$ such that  
\begin{equation}\label{eq:LBB}
\inf_{0\neq q\in \Ph}\sup_{0\neq v\in \Uh}\frac{(\Div v,q)}{\norm{\Grad v}_{L^2}\norm{q}_{L^2}}\geq \beta^*>0,
\end{equation}
where $\beta^*$ is independent of the discretization parameter $h>0$.
This is crucial for obtaining a stable approximation of the velocity-pressure pair of the Stokes equations (see e.g.~\cite{Boffi2013,Boyer2013,Guermond1998} and references therein).  For examples of pairs of finite element spaces $(\Uh,\Ph)$ that satisfy the LBB condition~\eqref{eq:LBB}, see for example~\cite{Girault1986,Ern2004}. The results that we present in the following apply to any pair of finite element spaces $(\Uh,\Ph)\subset H^1_0(\dom)^d\times L^2_0(\dom)\cap H^1(\dom)$ satisfying~\eqref{eq:L2projproperties} and~\eqref{eq:LBB}. One popular option are the Taylor-Hood finite elements, but also the MINI element by Arnold, Brezzi and Fortin~\cite{Arnold1984} satisfies the requirements. 

Next, we define the skew-symmetric trilinear form $b$ for functions $u,v,w\in H^1(\dom)$ with $u\cdot n=0$ on $\partial\dom$ as
\begin{equation}
\label{eq:defb}
b(u,v,w) = \int_{\dom} (u\cdot\Grad) v \cdot w dx +\frac12 \int_{\dom} \Div u v\cdot w dx.
\end{equation}
We observe that $b(u,w,v)= - b(u,v,w)$ and hence $b(u,v,v)=0$ for $u,v\in H^1(\dom)$ with $u\cdot n =0$ on $\partial\dom$ or $v=0$ on $\partial\dom$.  
  
 We start by approximating the initial data for $\tu$:
\begin{equation}
\label{eq:initdataapprox}
\tu^0_h = \PUh u_0, 
\end{equation} 
where $\PUh$ is the $L^2$-orthogonal projection.
Since $\tu_h^0$ may not be divergence free, we project it to an approximately divergence free function using the following projection step: Find $(u_h^0,p_h^0)\in \Uh\times\Ph$ such that
\begin{align*}
\left(\frac{u^{0}_h-\tu^{0}_h}{\Delta t }, v\right)&= (p^{0}_h,\Div v),\\
\left(\Div u^{0}_h,q\right)&=0,	
\end{align*}
for all $(v,q)\in \Uh\times \Ph$.

Then we define the following iterative scheme: 
\begin{enumerate}
	\item[{\bf Step 1}] (Prediction step):
	For any $m\geq 0$, given $ u_h^{m},\tu_h^m \in \Uh $, find $\tu_h^{m+1} \in \Uh $, such that for all $v\in \Uh $,
	\begin{equation}
		\label{eq:step1fully}
	\left(\frac{\tu^{m+1}_h-u^m_h}{\Delta t},v\right)+ b(\tu^m_h,\tu^{m+1}_h,v)+\mu(\Grad \tu_h^{m+1},\Grad v)= (\PUh f^{m}, v),
	\end{equation}
\item[{\bf Step 2}] (Projection step): Next we seek $(u^{m+1}_h,p^{m+1}_h)\in \Uh\times\Ph$ which satisfy for all $(v,q)\in \Uh\times\Ph$,
\begin{subequations}
	\label{eq:projectionfullydiscrete}
	\begin{align}
	\label{eq:projection1}
	\left(\frac{u^{m+1}_h-\tu^{m+1}_h}{\Delta t }, v\right)&= ( p^{m+1}_h,\Div v),\\
\label{eq:projection2}
	\left(\Div u^{m+1}_h,q\right)&=0,	
	\end{align}	
\end{subequations}
\end{enumerate}

\subsection{Solvability of the scheme}
Next, we show that the scheme~\eqref{eq:step1fully}--\eqref{eq:projectionfullydiscrete} is well-posed, i.e., given $(u^m_h,p_h^m)\in \Uh\times\Ph$, there is a unique $(u^{m+1}_h,p_h^{m+1})\in \Uh\times\Ph$ solving~\eqref{eq:step1fully}--\eqref{eq:projectionfullydiscrete}.  
\begin{lemma}
	\label{lem:solvability}
	For every $\Delta t, h>0$ and every $m=0,1,\dots$, given $(\tu_h^m,u^m_h,p^m_h)\in \Uh\times\Uh\times\Ph$, then there exists a unique $(\tu_h^{m+1},u^{m+1}_h,p^{m+1}_h)\in \Uh\times \Uh\times\Ph$ solving~\eqref{eq:step1fully}--\eqref{eq:projectionfullydiscrete}.
\end{lemma}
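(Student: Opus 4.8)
The plan is to solve the two steps sequentially, since the prediction step~\eqref{eq:step1fully} decouples from the projection step~\eqref{eq:projectionfullydiscrete}. First I would treat Step 1: with $u_h^m,\tu_h^m$ given, equation~\eqref{eq:step1fully} is a square linear system for $\tu_h^{m+1}\in\Uh$, since $b(\tu_h^m,\cdot,\cdot)$ and $\mu(\Grad\cdot,\Grad\cdot)$ are bilinear in their last two arguments and the convecting field $\tu_h^m$ is frozen. Because $\Uh$ is finite dimensional, existence of a unique solution is equivalent to injectivity of the associated linear map, which I would obtain via an energy estimate: testing the homogeneous version ($u_h^m=0$, $f^m=0$) with $v=\tu_h^{m+1}$ gives $\tfrac{1}{\Delta t}\norm{\tu_h^{m+1}}_{L^2}^2+b(\tu_h^m,\tu_h^{m+1},\tu_h^{m+1})+\mu\norm{\Grad\tu_h^{m+1}}_{L^2}^2=0$. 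The skew-symmetry property $b(\tu_h^m,v,v)=0$ noted after~\eqref{eq:defb} (valid since $\tu_h^m\in\Uh\subset H^1_0$, so the trace vanishes) kills the trilinear term, forcing $\tu_h^{m+1}=0$. Hence the system is injective and therefore uniquely solvable.

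Next I would treat Step 2. Given $\tu_h^{m+1}$ from Step 1, system~\eqref{eq:projection1}--\eqref{eq:projection2} is a discrete Darcy/saddle-point problem for $(u_h^{m+1},p_h^{m+1})\in\Uh\times\Ph$. The natural tool is the classical theory of mixed finite element methods: I would verify the two Brezzi conditions. Coercivity on the kernel is immediate and in fact trivial here, since the bilinear form acting on $u_h^{m+1}$ is simply $\tfrac{1}{\Delta t}(u_h^{m+1},v)$, which is coercive on all of $\Uh$ (not merely the divergence-free kernel) with constant $\tfrac{1}{\Delta t}$. The inf-sup condition on the coupling term $(p,\Div v)$ is exactly the LBB condition~\eqref{eq:LBB} assumed on the pair $(\Uh,\Ph)$, which guarantees surjectivity of the divergence onto $\Ph$ and hence control of the pressure. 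By the Brezzi splitting theorem, these two conditions yield existence and uniqueness of $(u_h^{m+1},p_h^{m+1})$.

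Alternatively, and perhaps more cleanly for a finite-dimensional problem, I would argue directly. Since the problem is linear on a finite-dimensional space, uniqueness implies existence, so it suffices to show that the homogeneous problem (with $\tu_h^{m+1}=0$) has only the trivial solution. Testing~\eqref{eq:projection1} with $v=u_h^{m+1}$ gives $\tfrac{1}{\Delta t}\norm{u_h^{m+1}}_{L^2}^2=(p_h^{m+1},\Div u_h^{m+1})$, and~\eqref{eq:projection2} with $q=p_h^{m+1}$ gives $(\Div u_h^{m+1},p_h^{m+1})=0$; combining the two forces $u_h^{m+1}=0$. Then~\eqref{eq:projection1} reduces to $(p_h^{m+1},\Div v)=0$ for all $v\in\Uh$, and the LBB condition~\eqref{eq:LBB} forces $p_h^{m+1}=0$. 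This establishes injectivity, hence unique solvability of Step 2.

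The two steps together give the claimed unique $(\tu_h^{m+1},u_h^{m+1},p_h^{m+1})$. I expect the only genuinely delicate point to be the pressure uniqueness: one must invoke the LBB condition~\eqref{eq:LBB} rather than merely the coercivity estimate, since the velocity equation alone does not see the pressure. Everything else is a routine finite-dimensional linear-algebra argument dressed in the language of energy estimates, and the skew-symmetry of $b$ and the LBB inf-sup bound are precisely the two structural ingredients that make each step invertible.
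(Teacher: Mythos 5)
Your proof is correct, but for the projection step it takes a genuinely different route from the paper, so a comparison is in order. For Step 1 the two arguments are essentially the same: the paper verifies coercivity and boundedness of the frozen-coefficient form $a^m(\tu,v)=(\tu,v)+\Delta t\, b(\tu^m_h,\tu,v)+\Delta t\,\mu(\Grad\tu,\Grad v)$ and invokes Lax--Milgram, with the skew-symmetry of $b$ killing the trilinear term exactly as in your energy test; your finite-dimensional ``injective $\Rightarrow$ bijective'' shortcut merely lets you skip the boundedness check, which is legitimate since $\Uh$ is finite dimensional for each fixed $h$. For Step 2, however, the paper does not argue via Brezzi theory or a kernel computation: it invokes Corollary 3.1 of Guermond (1996) to obtain the orthogonal decomposition $\Uh=\mathbb{H}_h\oplus B_h^\top(\Ph^1)$, where $B_h$ is the discrete divergence with $(B_h v,q)=-(\Div v,q)$ and $\mathbb{H}_h=\ker(B_h)$, proves uniqueness of this decomposition by an orthogonality argument (using the zero-mean normalization of $\Ph$ for pressure uniqueness), and then observes that the projection step \emph{is} precisely the decomposition $\tu^{m+1}_h=u^{m+1}_h+\Delta t\,B_h^\top p_h^{m+1}$ with $u^{m+1}_h\in\mathbb{H}_h$. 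Your direct argument --- test the homogeneous saddle-point system with $v=u^{m+1}_h$ and $q=p^{m+1}_h$ to force $u^{m+1}_h=0$, then use the LBB condition~\eqref{eq:LBB} to force $p^{m+1}_h=0$, and conclude by squareness of the system --- is more elementary and self-contained, avoiding the external citation entirely; both routes ultimately rest on the LBB condition for pressure uniqueness, exactly as you flagged. What the paper's decomposition viewpoint buys is conceptual: identifying Step 2 as an orthogonal splitting of $\tu^{m+1}_h$ foreshadows the later identification $u_h^{m+1}=\PVh\tu_h^{m+1}$ and the role of the projection $\PVh$ in the compactness argument, which your purely algebraic treatment does not expose.
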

\begin{proof}
	We start by showing that if $(u_h^m,\tu_h^m,p^m_h)\in\Uh\times\Uh\times\Ph$, then there exists a unique $\tu^{m+1}_h\in \Uh$ satisfying~\eqref{eq:step1fully}, i.e., the first step of the algorithm is well-posed. To do so, we write~\eqref{eq:step1fully} as a linear variational problem: Find $\tu\in \Uh $ such that for all $v\in \Uh $,
	\begin{equation}
	\label{eq:elliptic}
	a^m( \tu ,v)= \mathcal{F}^m(v),
	\end{equation}
	where 
	\begin{equation}
	\label{eq:defa}
	a^m(\tu,v) =  	\left(\tu,v\right)+ \Delta t b(\tu^m_h,\tu ,v)+\Delta t\mu(\Grad \tu ,\Grad v) 
	\end{equation}
and
	\begin{equation}
	\label{eq:deffn}
	\mathcal{F}^m(v) = (u^m_h,v)+\Delta t(\PUh f^{m},v).
		\end{equation}
We show that $a^m$ is coercive and bounded on $\Uh$ and that $\mathcal{F}^m:\Uh\to \R$ is bounded. Then the Lax-Milgram theorem will imply well-posedness on $\Uh$ with the norm $\norm{v}_{\Uh}:=\norm{v}_{H^1_0}:= \norm{\Grad v}_{L^2}$.	To show coercivity, we plug in $v=\tu$:
	\begin{equation*}
		a^m(\tu,\tu) =  	\norm{\tu}_{L^2}^2+ \Delta t b(\tu^m_h,\tu ,\tu)+\Delta t\mu(\Grad \tu ,\Grad \tu)
= \norm{\tu}_{L^2}^2+ \Delta t\mu\norm{\Grad \tu}_{L^2}^2
 \geq c\norm{\tu}_{H^1_0}^2,
	\end{equation*}
	using the skew-symmetry of $b$. 
 This proves the coercivity on $\Uh$. For the boundedness, we have
	\begin{align*}
	\left| 	a^m(\tu,v)\right| \leq  &  	\norm{\tu}_{L^2}\norm{v}_{L^2}+ \Delta t \norm{\tu^m_h}_{L^4}\norm{\tu}_{H^1_0}\norm{v}_{L^4}+\Delta t\mu\norm{\Grad \tu}_{L^2}\norm{\Grad v}_{L^2}\\
\leq & \norm{\tu}_{L^2}\norm{v}_{L^2}+ \Delta t( \norm{\tu^m_h}_{L^4}+\mu)\norm{\tu}_{H^1_0}\norm{v}_{H^1_0}\leq C \norm{\tu}_{H^1_0}\norm{v}_{H^1_0}, 
	\end{align*}
	where we used the Sobolev embedding theorem and the Poincar\'e inequality. Here we also used that $\tu^m_h\in H^1_0(\dom)$ by assumption (and induction).
	 Hence $a^m$ is bounded on $\Uh$. 
	To show boundedness of $\mathcal{F}^m$ (for every $m$), we estimate:
	\begin{equation*}
	|\mathcal{F}^m(v)|\leq \norm{u^m_h}_{L^2}\norm{v}_{L^2}+\Delta t \norm{\PUh f^{m}}_{L^2}\norm{v}_{L^2}\leq \norm{u^m_h}_{L^2}\norm{v}_{L^2}+ \Delta t\norm{  f^{m}}_{L^2}\norm{v}_{L^2}.
	\end{equation*}
	 Thus, with the Poincar\'e inequality, we obtain boundedness of $\mathcal{F}^m$ on $\Uh$ since both $u^m_h$ and $f^m$ are in $L^2(\dom)$.
	We also note that $\Uh$ is dense in $H^1_0(\dom)$ and that these estimates hold for every fixed $h>0$.
	This implies the solvability of Step 1 of the algorithm.
	
	 The solvability of Step 2 follows from the choice of the LBB-stable pair $(\Uh,\Ph)$: Corollary 3.1 in~\cite{Guermond1996} implies the orthogonal decomposition $\Uh = \mathbb{H}_h \oplus B_h^\top (\Ph^1)$, 
	 where $B_h:\Uh\to \Ph$ is the operator defined by 
	 	\begin{equation*}
	 		(B_h v ,q ):=-(\Div v ,q ) = ( v,B_h^\top q ) ,\quad \forall\, q \in \Ph,\, v\in \Uh,
	 		\end{equation*}
	 and
	 where $\mathbb{H}_h = \ker (B_h)$ and $\Ph^1$ is the subset of $q\in \Ph$ for which $\norm{q}_{\Ph^1}=\norm{B_h^\top q}_{L^2(\dom)}$ is bounded. (Here $\norm{\cdot}_{\Ph^1}$ is induced by the inner product $(p,q)_{\Ph^1} = (B_h^\top p,B_h^\top q)$). Thus any $v\in \Uh$ can be decomposed into $v= w + B_h^\top p$, where $p\in \Ph^1$ and $w\in \Uh$ satisfies $B_h w=0,$ and $w$ and $B_h^\top p$ are orthogonal. This decomposition is unique. To see this, assume by contradiction that there are  $w_1,w_2\in \mathbb{H}_h$ and $p_1,p_2\in \Ph^1$ such that $v=w_1 + B_h^\top p_1 = w_2 + B_h^\top p_2$. We subtract the two equations from each other to obtain
	 \begin{equation}\label{eq:w1w2}
	 	0 = w_1 -w_2 + B_h^\top (p_1-p_2).
	 \end{equation}
	 $w_1-w_2\in \mathbb{H}_h$ and $p_1-p_2\in \Ph^1$, therefore $w_1-w_2$ and $B_h^\top(p_1-p_2)$ are orthogonal with respect  to $(\cdot,\cdot)$. Thus taking the inner product of~\eqref{eq:w1w2} with $w_1-w_2$, we obtain
	 \begin{equation*}
	 	(w_1-w_2,w_1-w_2)=0,
	 \end{equation*}
	 hence $w_1=w_2$ from which it also follows that $B_h^\top p_1 = B_h^\top p_2$. Since $\int p_1 dx =\int p_2 dx =0$, this also implies $p_1=p_2$, a contradiction. Now the projection step~\eqref{eq:projectionfullydiscrete} is exactly the decomposition of $\tu^{m+1}_h$ in $\mathbb{H}_h$ and $B_h^\top (\Ph^1)$: Indeed, we can write it as
	  \begin{equation*}
	  	\begin{split}
	  		\tu^{m+1}_h &= u^{m+1}_h +\Delta t B_h^\top p_h^{m+1},\\
	  		B_h u^{m+1}_h & = 0,
	  	\end{split}
	  \end{equation*}
	  i.e., $u^{m+1}_h\in \ker(B_h)=\mathbb{H}_h$ and $B_h^\top p_h^{m+1}\in B_h^{\top }(\Ph^1)$ with $\tu^{m+1}_h = u^{m+1}_h +\Delta t B_h^\top p_h^{m+1}$ and we already know this decomposition is unique (see also~\cite{Guermond1998}).
\end{proof}
\subsection{Energy stability}
Next, we show the discrete energy inequality which will result in a priori estimates on the approximations that will allow us to derive precompactness of the approximations. 

\begin{lemma}
	\label{lem:discenergyestimate}
	The approximations computed by the scheme~\eqref{eq:step1fully}--\eqref{eq:projectionfullydiscrete} satisfy
	\begin{equation}\label{eq:discenergybalance}
	E^M_h	 
	 +\frac12\sum_{m=0}^{M-1}\norm{\tu_h^{m+1}-u^m_h}_{L^2}^2 
	+\frac12\sum_{m=0}^{M-1}\norm{u^{m+1}_h-\tu^{m+1}_h}^2_{L^2}+\mu\Delta t\sum_{m=0}^{M-1}\norm{\Grad \tu_h^{m+1}}_{L^2}^2
 = E^0_h + \Delta t \sum_{m=0}^{M-1}(  f^m,\tu_h^{m+1}),
	\end{equation}	
	where
	\begin{equation*}
	E^M_h := 	\frac{1}{2}\norm{u^{M}_h}_{L^2}^2,
	\end{equation*}
	for any integer $M\in \left\{0,1,\dots, \frac{T}{\Delta t}\right\}$.
\end{lemma}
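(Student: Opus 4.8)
The plan is to derive~\eqref{eq:discenergybalance} by testing each step of the scheme against a well-chosen element of $\Uh$ and applying the elementary polarization identity $(a-b,a) = \tfrac12\norm{a}_{L^2}^2 - \tfrac12\norm{b}_{L^2}^2 + \tfrac12\norm{a-b}_{L^2}^2$, then summing over $m$ so that the kinetic-energy contributions telescope.

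First, in the prediction step~\eqref{eq:step1fully} I would take the test function $v = \tu_h^{m+1}$, which is admissible since $\tu_h^{m+1}\in\Uh$. The nonlinear term drops out because $b(\tu_h^m,\tu_h^{m+1},\tu_h^{m+1}) = 0$ by the skew-symmetry noted after~\eqref{eq:defb} (the last two arguments coincide and $\tu_h^{m+1}\in H^1_0(\dom)^d$ vanishes on $\partial\dom$). Multiplying by $\Delta t$ and polarizing $(\tu_h^{m+1}-u_h^m,\tu_h^{m+1})$ yields
\begin{equation*}
\frac12\norm{\tu_h^{m+1}}_{L^2}^2 - \frac12\norm{u_h^m}_{L^2}^2 + \frac12\norm{\tu_h^{m+1}-u_h^m}_{L^2}^2 + \mu\Delta t\norm{\Grad\tu_h^{m+1}}_{L^2}^2 = \Delta t(\PUh f^m,\tu_h^{m+1}).
\end{equation*}
Because $\PUh$ is the $L^2$-orthogonal projection onto $\Uh$ and $\tu_h^{m+1}\in\Uh$, the right-hand side equals $\Delta t(f^m,\tu_h^{m+1})$, matching the target forcing term.

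Next, in the projection step I would test~\eqref{eq:projection1} with $v = u_h^{m+1}\in\Uh$ and invoke~\eqref{eq:projection2} with $q = p_h^{m+1}\in\Ph$ to eliminate the pressure term, since this gives $(p_h^{m+1},\Div u_h^{m+1}) = 0$. Multiplying by $\Delta t$ and polarizing $(u_h^{m+1}-\tu_h^{m+1},u_h^{m+1})$ produces
\begin{equation*}
\frac12\norm{u_h^{m+1}}_{L^2}^2 - \frac12\norm{\tu_h^{m+1}}_{L^2}^2 + \frac12\norm{u_h^{m+1}-\tu_h^{m+1}}_{L^2}^2 = 0.
\end{equation*}

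Finally, I would add these two per-step identities; the $\tfrac12\norm{\tu_h^{m+1}}_{L^2}^2$ terms cancel, leaving a single-step balance relating $\tfrac12\norm{u_h^{m+1}}_{L^2}^2$ to $\tfrac12\norm{u_h^m}_{L^2}^2$. Summing over $m = 0,\dots,M-1$ telescopes the kinetic energy to $E_h^M - E_h^0$ and yields exactly~\eqref{eq:discenergybalance}. I expect no genuine obstacle: this is a routine testing argument, and the only points requiring a moment of care are the admissibility of substituting the scheme's own iterates as test functions (immediate, since both $\tu_h^{m+1}$ and $u_h^{m+1}$ lie in $\Uh$), the vanishing of the convective term via skew-symmetry of $b$, and the self-adjointness of $\PUh$ that lets us replace $\PUh f^m$ by $f^m$ when paired with $\tu_h^{m+1}\in\Uh$.
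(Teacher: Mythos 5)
Your proposal is correct and follows essentially the same route as the paper's proof: testing the prediction step with $v=\tu_h^{m+1}$ (skew-symmetry of $b$ killing the convective term), testing the projection step with $v=u_h^{m+1}$ and using $q=p_h^{m+1}$ in~\eqref{eq:projection2} to eliminate the pressure, applying the identity $2a(a-b)=a^2-b^2+(a-b)^2$ in both steps, and replacing $(\PUh f^m,\tu_h^{m+1})$ by $(f^m,\tu_h^{m+1})$ via orthogonality of the projection before summing and telescoping. There is nothing to add or correct.
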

\begin{proof}
	To derive the energy estimate, we take as a test function $v=\tu^{m+1}_h\in\Uh$  in~\eqref{eq:step1fully} and use the simple identity $2a(a-b)=a^2-b^2+(b-a)^2$:
		\begin{multline*}
	\frac{1}{2\Delta t}\left(\norm{\tu^{m+1}_h}_{L^2}^2-\norm{u^m_h}^2_{L^2}+\norm{\tu_h^{m+1}-u^m_h}_{L^2}^2\right)+ b(\tu^m_h,\tu^{m+1}_h, \tu_h^{m+1})\\
= -\mu(\Grad \tu_h^{m+1},\Grad \tu_h^{m+1})+ (\PUh f^{m},\tu^{m+1}_h).
	\end{multline*}
	Using the skew-symmetry of $b$, we obtain
	\begin{equation}\label{eq:energytemp}
	\frac{1}{2\Delta t}\left(\norm{\tu^{m+1}_h}_{L^2}^2-\norm{u^m_h}^2_{L^2}+\norm{\tu_h^{m+1}-u^m_h}_{L^2}^2\right)
	= -\mu\norm{\Grad \tu_h^{m+1}}_{L^2}^2 + (\PUh f^{m},\tu^{m+1}_h).
	\end{equation}
	This looks promising, except we would prefer to have $u_h^{m+1}$ instead of $\tu^{m+1}_h$ in the kinetic part of the energy.
	Therefore, we take $v=u^{m+1}_h\in \Uh$ as a test function in~\eqref{eq:projection1}. Then since $u^{m+1}_h$ satisfies~\eqref{eq:projection2} for all $q\in\Ph$ and $p^{m+1}_h\in\Ph$, we see that the second term is zero and hence
	\begin{equation}\label{eq:utildebound}
	0=\left(\frac{u^{m+1}_h-\tu^{m+1}_h}{\Delta t},u^{m+1}_h\right)=\frac{1}{2\Delta t}\left(\norm{u^{m+1}_h}_{L^2}^2-\norm{\tu^{m+1}_h}^2_{L^2}+\norm{u^{m+1}_h-\tu^{m+1}_h}^2_{L^2}\right),
	\end{equation} 
	where we have used the identity $2a(a-b)= (a^2-b^2+(a-b)^2)$ once more.
Using this identity in~\eqref{eq:energytemp}, and the definition of the $L^2$-projection,~\eqref{eq:L2projection}, we obtain
\begin{multline}\label{eq:heart}
	\frac{1}{2\Delta t}\left(\norm{u^{m+1}_h}_{L^2}^2-\norm{u^m_h}^2_{L^2}+\norm{\tu_h^{m+1}-u^m_h}_{L^2}^2+\norm{u^{m+1}_h-\tu^{m+1}_h}^2_{L^2}\right)\\
= -\mu\norm{\Grad \tu_h^{m+1}}_{L^2}^2+ (  f^{m},\tu^{m+1}_h).
\end{multline}
Now, we can multiply by $\Delta t$, and sum over $m=0,\dots, M-1$ to obtain
\begin{multline*}
	\frac{1}{2}\norm{u^{M}_h}_{L^2}^2 +\frac12\sum_{m=0}^{M-1}\norm{\tu_h^{m+1}-u^m_h}_{L^2}^2 
+\frac12\sum_{m=0}^{M-1}\norm{u^{m+1}_h-\tu^{m+1}_h}^2_{L^2}+\mu\Delta t\sum_{m=0}^{M-1}\norm{\Grad \tu_h^{m+1}}_{L^2}^2
 \\
= \frac12\norm{u^0_h}^2_{L^2} +\Delta t \sum_{m=0}^{M-1} (  f^m,\tu^{m+1}_h)
\end{multline*}	
which is what we wanted to prove.	
\end{proof}
\begin{remark}\label{rem:fneq0}
	If $f\neq 0$, we can show an estimate on the energy using a discrete version of Gr\"onwall's inequality: 
	We estimate with Young's inequality
	\begin{equation*}
		\left|(f^m,\tu_h^{m+1})\right|\leq \frac{1}{2}\left(\norm{f^m}_{L^2}^2+\norm{\tu_h^{m+1}}_{L^2}^2\right),
	\end{equation*}
	and then use~\eqref{eq:utildebound},
		\begin{equation*}
		\left|(f^m,\tu_h^{m+1})\right|\leq \frac{1}{2}\left(\norm{f^m}_{L^2}^2+\norm{u_h^{m+1}}_{L^2}^2+ \norm{u^{m+1}_h-\tu^{m+1}_h}_{L^2}^2\right),
	\end{equation*}
	thus, we can estimate in~\eqref{eq:heart}  
	\begin{equation*}
		(1-\Delta t)E_h^{m+1} + (1-\Delta t)D_h^{m+1}\leq E_h^m + \frac{\Delta t}{2}\norm{f^m}_{L^2}^2,
	\end{equation*}
	where we denoted
	\begin{equation*}
		D_h^{m+1} = \frac12\norm{\tu_h^{m+1}-u^m_h}_{L^2}^2+\frac12\norm{u^{m+1}_h-\tu^{m+1}_h}^2_{L^2} + 
	\Delta t\mu\norm{\Grad \tu_h^{m+1}}_{L^2}^2.
	\end{equation*}
	Since $\frac{1}{1-\Delta t}\leq 1+2\Delta t$ for $\Delta t$ small enough, we get
	\begin{equation*}
		E_h^{m+1} + D_h^{m+1}\leq (1+2\Delta t)E_h^m + \frac{(1+2\Delta t)\Delta t}{2}\norm{f^m}_{L^2}^2,
	\end{equation*}
	which with an application of a discrete Gr\"onwall inequality, Lemma~\ref{lem:discretegronwall}, leads to
	\begin{equation*}
		E^M_h +\sum_{m=1}^{M}D_h^m\leq Ce^{2M\Delta t}\left(E_h^0+\frac{ \Delta t}{2}\sum_{m=0}^{M-1}\norm{f^m}_{L^2}^2\right).
	\end{equation*}
\end{remark}
\begin{remark}
	\label{rem:utildeL2bound}
	We also obtain from~\eqref{eq:utildebound}  that 
	\begin{equation}\label{eq:utildeL2bound}
		\frac{1}{2}\norm{\tu^M_h}_{L^2}^2\leq Ce^{2M\Delta t}\left(E_h^0+\frac{ \Delta t}{2}\sum_{m=0}^{M-1}\norm{f^m}_{L^2}^2\right),
	\end{equation}
	for any $M\in \N$.
\end{remark}
\section{Convergence analysis of the scheme}\label{sec:convergence}
Next, we will show that interpolations of the approximations defined by the scheme~\eqref{eq:step1fully}--\eqref{eq:projectionfullydiscrete} converge up to a subsequence to a weak solution of~\eqref{eq:NS}. For this purpose, we define the piecewise linear and piecewise constant interpolants in time:
\begin{alignat}{2}
	u_h(t)& = u_h^m+\frac{t-t^m}{\Delta t}(u^{m+1}_h-u^m_h),\qquad &t\in [t^m,t^{m+1}),\label{eq:defuh}\\
	\bar{u}_h(t) &= \tu_h^m,\qquad &t\in [t^m,t^{m+1}),\\
		\hu_h(t)& = \tu_h^{m+1}+\frac{t-t^m}{\Delta t}(\tu^{m+2}_h-\tu^{m+1}_h),\qquad &t\in [t^m,t^{m+1}),\\
	\widetilde{u}_h(t) &= \widetilde{u}_h^{m+1},\qquad &t\in (t^m,t^{m+1}],\\
	p_h(t)& = p_h^{m+1},\qquad & t\in (t^m,t^{m+1}],\\
	f_h(t)& = \frac{1}{\Delta t}\int_{t^m}^{t^{m+1}}\PUh f(s)ds,\qquad &t\in (t^m,t^{m+1}],\label{eq:defph}
\end{alignat}
for $m=0,1,2,\dots$ with
\begin{equation*}
	\tu_h(0)=u^0_h,\quad \text{and}\quad p_h(0)=p_h^0.
\end{equation*}
From the energy balance in the last section, Lemma~\ref{lem:discenergyestimate} and Remark~\ref{rem:fneq0}, we obtain the following uniform a priori estimates for the sequences $\{u_h\}_{h>0}$, $\{\bar{u}_h\}_{h>0}$, $\{\hu_h\}_{h>0}$, and $\{\tu_h\}_{h>0}$:
\begin{align*}
	&\{u_h\}_{h>0}\subset L^\infty(0,T;L^2(\dom)),\\
	&\{\bar{u}_h\}_{h>0},\{\widetilde{u}_h\}_{h>0}, \{\widehat{u}_h\}_{h>0}\subset L^\infty(0,T;L^2(\dom))\cap L^2(0,T;H^1_0(\dom)).
\end{align*}
These uniform estimates imply, using the Banach-Alaoglu theorem, that there exist weakly convergent subsequences, which, for the ease of notation, we still denote by $h\to 0$,
\begin{align}
	u_h\weakstar u,\quad 	\bar{u}_h\weakstar \bar{u},\quad \widetilde{u}_h\weakstar \widetilde{u},\quad \widehat{u}_h\weakstar \widehat{u},&\quad \text{in }\, L^\infty(0,T;L^2(\dom)),\label{eq:uhweakconv}\\
\bar{u}_h\weak \bar{u},\quad	\widetilde{u}_h\weak \widetilde{u},\quad 	\widehat{u}_h\weak \widehat{u},&\quad \text{in }\, L^2(0,T;H^1_0(\dom)).\label{eq:Hhweakconv}
\end{align}
In order to prove convergence of the scheme, we will need to derive strong precompactness of the sequences in $L^2([0,T]\times\dom)$. To achieve this, we will need to adapt the Aubin-Lions-Simon lemma~\cite{Temam1977,Simon1987} to our situation. This requires a uniform bound  on the time derivative of $u_h$ or $\hu_h$. We prove this in the following lemma:
\begin{lemma}
	\label{lem:timederivativebounds}
	Under the assumption that $h^{s-1}\leq C \sqrt{\Delta t}$, the time derivative  of   $\hu_h$ satisfies uniformly in $h>0$:
	\begin{equation}
		\label{eq:timederu}
		 \{\partial_t \hu_h\}_{h>0}\subset L^2(0,T;(H_{\Div}^1(\dom)\cap H^s(\dom))^*).
	\end{equation}
	for any $s\in \N$, $2\leq s\leq k+1$, where $k$ is the degree of the polynomial space used in $\Uh$.
\end{lemma}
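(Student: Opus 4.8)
The plan is to exploit that $\partial_t\hu_h$ is piecewise constant in time, equal to $(\tu_h^{m+2}-\tu_h^{m+1})/\Delta t$ on each interval $(t^m,t^{m+1})$, so that
\begin{equation*}
\int_0^T\norm{\partial_t\hu_h}_{(H^1_{\Div}\cap H^s)^*}^2\,dt = \sum_{m}\Delta t\,\norm{\frac{\tu_h^{m+2}-\tu_h^{m+1}}{\Delta t}}_{(H^1_{\Div}\cap H^s)^*}^2,
\end{equation*}
and it suffices to bound this sum uniformly in $h$. For a fixed $v\in H^1_{\Div}(\dom)\cap H^s(\dom)$ I would write $\tu_h^{m+2}-\tu_h^{m+1} = (\tu_h^{m+2}-u_h^{m+1})+(u_h^{m+1}-\tu_h^{m+1})$ and, since both differences lie in $\Uh$ while $v-\PUh v$ is $L^2$-orthogonal to $\Uh$, replace $v$ by $\PUh v=:w$ in the $L^2$ pairing at no cost. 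The prediction step~\eqref{eq:step1fully} at level $m+1$ and the projection step~\eqref{eq:projection1} at level $m$ then give
\begin{equation*}
\Big(\tfrac{\tu_h^{m+2}-\tu_h^{m+1}}{\Delta t},v\Big) = -b(\tu_h^{m+1},\tu_h^{m+2},w)-\mu(\Grad\tu_h^{m+2},\Grad w)+(\PUh f^{m+1},w)+(p_h^{m+1},\Div w),
\end{equation*}
reducing the task to estimating these four terms by $\norm{v}_{H^s}$ times factors that are square-summable against $\Delta t$.

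The viscous and forcing terms are routine: by the $H^1$-stability~\eqref{eq:H1} and $L^2$-stability~\eqref{eq:Lr} of $\PUh$ they are controlled by $C\mu\norm{\Grad\tu_h^{m+2}}_{L^2}\norm{v}_{H^s}$ and $C\norm{f^{m+1}}_{L^2}\norm{v}_{H^s}$, whose squares sum against $\Delta t$ by the discrete energy balance (Lemma~\ref{lem:discenergyestimate}) and $f\in L^2(0,T;L^2(\dom))$. The nonlinear term is where the extra regularity of $v$ is used: since $s\ge 2$ and $d\le 3$ we have $H^s(\dom)\hookrightarrow L^\infty(\dom)$, so the $L^\infty$-stability~\eqref{eq:Lr} gives $\norm{w}_{L^\infty}\le C\norm{v}_{H^s}$; writing $b$ in its two integral parts from~\eqref{eq:defb}, placing the single gradient on $\tu_h^{m+2}$ in the first part and using $\norm{\Div\tu_h^{m+1}}_{L^2}\le C\norm{\Grad\tu_h^{m+1}}_{L^2}$ in the second, together with the uniform $L^2$-bound on $\tu_h$ from Remark~\ref{rem:utildeL2bound}, yields $|b(\tu_h^{m+1},\tu_h^{m+2},w)|\le C(\norm{\Grad\tu_h^{m+1}}_{L^2}+\norm{\Grad\tu_h^{m+2}}_{L^2})\norm{v}_{H^s}$. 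Crucially this leaves only \emph{one} factor of $\norm{\Grad\tu_h}_{L^2}$, which is exactly square-summable against $\Delta t$; this is why $L^2$ in time (rather than only $L^{4/3}$) is attainable here.

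The hard part is the pressure term $(p_h^{m+1},\Div w)$, for which no a priori bound on $p_h^{m+1}$ is available and a direct estimate produces an uncontrollable factor $\Delta t^{-1}$. Here I would use that $v$ is genuinely divergence-free: since $\Div v=0$,
\begin{equation*}
(p_h^{m+1},\Div\PUh v) = (p_h^{m+1},\Div(\PUh v-v)),
\end{equation*}
and the approximation property~\eqref{eq:H1approx} bounds $\norm{\Div(\PUh v-v)}_{L^2}\le C\,h^{s-1}\norm{v}_{H^s}$. The missing pressure bound is recovered from the projection step and the LBB condition~\eqref{eq:LBB}: testing~\eqref{eq:projection1} with functions in $\Uh$ and using Poincar\'e gives $\norm{p_h^{m+1}}_{L^2}\le (C/\Delta t)\,\norm{u_h^{m+1}-\tu_h^{m+1}}_{L^2}$, so this term contributes at most $(C\,h^{s-1}/\Delta t)\norm{u_h^{m+1}-\tu_h^{m+1}}_{L^2}\norm{v}_{H^s}$ to the dual norm. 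Squaring, multiplying by $\Delta t$, and summing yields $C\,(h^{2(s-1)}/\Delta t)\sum_m\norm{u_h^{m+1}-\tu_h^{m+1}}_{L^2}^2$; the last sum is bounded by Lemma~\ref{lem:discenergyestimate}, and the hypothesis $h^{s-1}\le C\sqrt{\Delta t}$ gives $h^{2(s-1)}/\Delta t\le C$, so the contribution is uniformly bounded. Collecting all four contributions bounds the sum above uniformly, which is the claim; the one delicate point is precisely the interplay in the pressure term between the $\Delta t^{-1}$ blow-up of the pressure and the $h^{s-1}$ gain from testing against a divergence-free function, balanced exactly by the scaling $h^{s-1}\le C\sqrt{\Delta t}$.
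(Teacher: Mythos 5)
Your proposal is correct and follows essentially the same route as the paper's proof: the same decomposition through $\PUh$ (killing the $(v-\PUh v)$ pairing since $\partial_t\hu_h\in\Uh$), the same substitution of the prediction and projection steps, the same single-gradient bound on the trilinear term via $\norm{\PUh v}_{L^\infty}\leq C\norm{v}_{H^s}$, and the same treatment of the pressure term combining the LBB bound $\norm{p_h^{m+1}}_{L^2}\leq C\Delta t^{-1}\norm{u_h^{m+1}-\tu_h^{m+1}}_{L^2}$ with $\norm{\Div(\PUh v-v)}_{L^2}\leq Ch^{s-1}\norm{v}_{H^s}$ and the balance $h^{s-1}\leq C\sqrt{\Delta t}$. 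The only difference is cosmetic: you estimate the dual norm per time step and sum squares against $\Delta t$, while the paper pairs against an arbitrary $v\in L^2(0,T;H^1_{\Div}\cap H^s)$ and applies Cauchy--Schwarz in time, which is an equivalent formulation.
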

\begin{proof}
	Note that
	\begin{equation*}
 		\partial_t \hu_h =\frac{\tu^{m+2}_h-\tu_h^{m+1}}{\Delta t},\quad t\in (t^m,t^{m+1}).
 \end{equation*}
	Recall that $\PUh:L^2(\dom)^d\to \Uh$ is the $L^2$-orthogonal projection onto the finite element space.
	Then we have
	for $v\in L^2(0,T;H^1_{\Div}\cap H^s(\dom))$, using~\eqref{eq:step1fully} and~\eqref{eq:projection1}
	\begin{align*}
		\int_0^T(\partial_t \hu_h,v) dt& = \int_0^T(\partial_t \hu_h,v-\PUh v) dt + \int_0^T(\partial_t \hu_h,\PUh v) dt \\
		& = \underbrace{\int_0^T(\partial_t \hu_h,v-\PUh v) dt}_{\text{I}}
		-\underbrace{\int_{\Delta t}^{T+\Delta t} b(\bar{u}_h,\tu_h,\PUh v(t-\Delta t))dt}_{\text{II}}-\underbrace{\int_{\Delta t}^{T+\Delta t}\mu(\Grad \tu_h,\Grad \PUh v(t-\Delta t))dt}_{\text{III}}\\
		&\quad +\underbrace{\int_{0}^{T}(p_h,\Div (\PUh v-v)) dt}_{\text{IV}} +\underbrace{\int_{\Delta t}^{T+\Delta t}(f_h,\PUh v(t-\Delta t))dt}_{\text{V}} ,
	\end{align*}
	where we have used that $v$ is divergence free. We extend $v$ and $f$ by zero for $t>T$. We estimate each of the five terms. 
	Since $\partial_t \hu_h\in \Uh$, we have that
	\begin{equation*}
		(\partial_t \hu_h,v)= (\partial_t \hu_h,\PUh v),
	\end{equation*} 
	and hence the first term $\text{I}$ vanishes.
	
	For the second term, we have
	\begin{align*}
		|\text{II}|&\leq \left| \sum_{m=1}^{N} \int_{t^m}^{t^{m+1}}b(\tu_h^m,\tu_h^{m+1},\PUh v(t-\Delta t))dt\right|\\
		& \leq \sum_{m=1}^{N-1} \left(\norm{\tu^m_h}_{L^2} \norm{\Grad\tu_h^{m+1}}_{L^2}+\frac12\norm{\tu^{m+1}_h}_{L^2} \norm{\Div\tu_h^{m}}_{L^2}\right)\int_{t^m}^{t^{m+1}}\norm{\PUh v(t-\Delta t)}_{L^\infty}dt\\
		& \leq C\max_{k=1,\dots,N}\norm{\tu^k_h}_{L^2}\left(\Delta t \sum_{m=1}^{N}\norm{\Grad\tu_h^{m}}_{L^2}^2\right)^{\frac12}\norm{\PUh v}_{L^2(0,T;L^\infty(\dom))}\\
		&\leq C (E_h^0+ \norm{f}^2_{L^2([0,T]\times\dom)})	\norm{\PUh v}_{L^2(0,T;L^\infty(\dom))}\\
		&\leq C	\norm{v}_{L^2(0,T;L^\infty(\dom))}\\
		&\leq C	\norm{v}_{L^2(0,T;H^2(\dom))}
	\end{align*}
	where we used Morrey's inequality, the Sobolev embedding theorem and the stability of the $L^2$-projection operator, equation~\eqref{eq:Lr}, for the last inequalities. 
	We proceed to term III:
	\begin{align*}
		|\text{III}|& \leq \mu \left|\sum_{m=1}^{N }\int_{t^m}^{t^{m+1}}(\Grad \tu^{m+1}_h,\Grad\PUh v(t-\Delta t))dt  \right|\\
		& \leq\mu \left(\Delta t \sum_{m=1}^{N-1}\norm{\Grad \tu^{m+1}_h}_{L^2}^2\right)^{\frac12}\norm{\Grad\PUh v}_{L^2([0,T]\times\dom)}\\
		&\leq C (E_h^0 	+\norm{f}_{L^2([0,T]\times\dom)}^2)\norm{\Grad v}_{L^2([0,T]\times\dom)}\\
		&\leq C \norm{v}_{L^2(0,T;H^1(\dom))},	
	\end{align*}
	where we have used the stability of $\PUh$ on $H^1$,~\eqref{eq:H1}. 
 	For the fourth term, IV, we have,
	\begin{align}\label{eq:pressuretimecont}
		|\text{IV}|&\leq \norm{p_h}_{L^2([0,T]\times \dom)}\norm{\Div(\PUh v-v)}_{L^2([0,T]\times\dom)}\\
		& \leq \frac{C}{\sqrt{\Delta t}}\left(\sum_{m=0}^{N-1}\norm{\tu^{m+1}_h-u^{m+1}_h}_{L^2}^2\right)^{\frac12}\left(\int_0^T|v-\PUh v|_{H^1}^2 dt\right)^{\frac12}\notag\\
		&\leq \frac{C (E_h^0+\norm{f}_{L^2([0,T]\times\dom)}^2)^{1/2} h^{s-1}}{\sqrt{\Delta t}}\norm{v}_{L^2(0,T;H^s(\dom))},\notag
	\end{align}
	using~\eqref{eq:H1approx}.
	To deduce the estimate for the pressure, we have used the LBB stability condition,~\eqref{eq:LBB}, combined with~\eqref{eq:projection1} and the Poincar\'e inequality:
	\begin{align*}
		\norm{p_h^{m+1}}_{L^2}&\leq \frac{1}{\beta^*}\sup_{0\neq v\in\Uh}\frac{(\Div v,p^{m+1}_h)}{\norm{\Grad v}_{L^2}}\\
		& =  \frac{1}{\beta^*}\sup_{0\neq v\in\Uh}\frac{\left( v,\frac{u^{m+1}_h-\tu^{m+1}_h}{\Delta t}\right)}{\norm{\Grad v}_{L^2}}\\
		& \leq C\norm{\frac{u^{m+1}_h-\tu^{m+1}_h}{\Delta t}}_{L^2}.
	\end{align*}
	Finally, for the last term,  V, we have
	\begin{equation*}
		|\text{V}|\leq \left|\int_{\Delta t}^{T+\Delta t}(f_h, v(t-\Delta t))dt \right|  \leq \norm{f_h}_{L^2(0,T;L^2(\dom))}\norm{v}_{L^2(0,T ;L^2(\dom))}\leq C\norm{f}_{L^2(0,T;L^2(\dom))}\norm{v}_{L^2(0,T;L^2(\dom))}.
	\end{equation*}
	Combining the estimates I--V, we obtain
	\begin{equation*}
		\sup_{0\neq v\in L^2(0,T;H^1_{\Div}\cap H^s)}\left|\frac{(\partial_t \hu_h,v)}{\norm{v}_{L^2(0,T;H^1_{\Div}\cap H^s)}}\right|\leq C\left(1+\frac{h^{s-1}}{\sqrt{\Delta t}}\right),
	\end{equation*}
	which is uniformly bounded under the assumption that $h^{s-1}\leq C\sqrt{\Delta t}$. Thus $\partial_t \hu_h\subset L^2(0,T;(H^1_{\Div}(\dom)\cap H^s(\dom))^*)$ uniformly in $h>0$. 
	
\end{proof}
Next, we will show that the sequences  $\{\tu_h\}_{h>0}$, $\{\hu_h\}_{h>0}$, $\{u_h\}_{h>0}$, $\{\bar{u}_h\}_{h>0}$ have the same weak limit $\tu=\hu=u=\bar{u}$:
\begin{lemma}
	\label{lem:samelimits}
	Assume that  $\Delta t = o_{h\to 0}(1)$. Then, we have 
	$u=\bar{u}=\tu=\hu$ a.e. in $[0,T]\times\dom$ and in $L^2([0,T]\times\dom)$, and
	\begin{align}
		\label{eq:uubarconv}
		\lim_{h\to 0}\norm{u_h-\hu_h}_{L^2([0,T]\times\dom)} &= 0,\\
		\label{eq:utildeuhatconv}
		\lim_{h\to 0}\norm{\hu_h-\tu_h}_{L^2([0,T]\times\dom)}& = 0,\\
		\label{eq:utildeubarconv}
		\lim_{h\to 0}\norm{\bar{u}_h-\tu_h}_{L^2([0,T]\times\dom)}& = 0.
	\end{align}	
\end{lemma}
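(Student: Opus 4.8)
The plan is to prove the three strong limits \eqref{eq:uubarconv}--\eqref{eq:utildeubarconv} by an elementary computation on each time slab $[t^m,t^{m+1})$, and then read off the equality of the weak limits. The only ingredients I would use are the two summable increments supplied by the discrete energy balance: by Lemma~\ref{lem:discenergyestimate} together with its Gr\"onwall form in Remark~\ref{rem:fneq0}, both $\sum_{m}\norm{\tu_h^{m+1}-u_h^m}_{L^2}^2$ (the prediction increments) and $\sum_{m}\norm{u_h^{m+1}-\tu_h^{m+1}}_{L^2}^2$ (the projection increments) are bounded by a constant independent of $h$ and of the number of time steps.

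Writing $\theta=(t-t^m)/\Delta t\in[0,1)$ on $[t^m,t^{m+1})$, I would compute the pointwise difference of the interpolants and integrate. For \eqref{eq:uubarconv} one has $u_h-\hu_h=(1-\theta)(u_h^m-\tu_h^{m+1})+\theta(u_h^{m+1}-\tu_h^{m+2})$, so squaring, integrating over the slab (which produces a factor $\Delta t$) and summing over $m$ gives $\norm{u_h-\hu_h}_{L^2([0,T]\times\dom)}^2\le C\Delta t\sum_m\norm{\tu_h^{m+1}-u_h^m}_{L^2}^2\le C\Delta t$; this difference involves only prediction increments. For \eqref{eq:utildeuhatconv} I find $\hu_h-\tu_h=\theta(\tu_h^{m+2}-\tu_h^{m+1})$ on $(t^m,t^{m+1})$, and for \eqref{eq:utildeubarconv} likewise $\bar{u}_h-\tu_h=\tu_h^m-\tu_h^{m+1}$. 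The key point is that I cannot bound these raw time increments directly in $L^2(\dom)$, since the time-derivative estimate of Lemma~\ref{lem:timederivativebounds} only controls them in the weak dual norm. Instead I would split each increment into one prediction and one projection piece, $\tu_h^{m+2}-\tu_h^{m+1}=(\tu_h^{m+2}-u_h^{m+1})+(u_h^{m+1}-\tu_h^{m+1})$ and $\tu_h^{m+1}-\tu_h^{m}=(\tu_h^{m+1}-u_h^{m})+(u_h^{m}-\tu_h^{m})$, both summable, and conclude as before that the $L^2([0,T]\times\dom)$ norms are $O(\Delta t)$. The single boundary increment $u_h^0-\tu_h^0$ is harmless: the initial projection realizes $u_h^0$ as the orthogonal projection of $\tu_h^0=\PUh u_0$, so it is bounded, and it still carries a factor $\Delta t$. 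Since $\Delta t=o_{h\to0}(1)$, all three right-hand sides vanish as $h\to0$.

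Finally, the uniform $L^\infty(0,T;L^2(\dom))$ bounds guarantee that all four sequences converge weakly in $L^2([0,T]\times\dom)$ to the same limits as the weak-$\star$ limits in \eqref{eq:uhweakconv}. Because each paired difference in \eqref{eq:uubarconv}--\eqref{eq:utildeubarconv} converges to zero strongly, uniqueness of weak limits forces $u=\hu$, $\hu=\tu$, and $\bar{u}=\tu$ in $L^2([0,T]\times\dom)$, hence almost everywhere. The main obstacle throughout is precisely the absence of $L^2(\dom)$ control on the raw time increments $\tu_h^{m+2}-\tu_h^{m+1}$; routing them through the prediction and projection increments of the energy balance, rather than through the dual-norm time derivative, is the decisive step.
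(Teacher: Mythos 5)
Your proposal is correct and takes essentially the same route as the paper: the paper likewise computes the slab-wise differences of the interpolants, bounds the raw time increments $\tu_h^{m+1}-\tu_h^{m}$ and $\tu_h^{m+2}-\tu_h^{m+1}$ by splitting them into prediction increments $\tu_h^{m+1}-u_h^{m}$ and projection increments $u_h^{m}-\tu_h^{m}$ made summable by Lemma~\ref{lem:discenergyestimate} and Remark~\ref{rem:fneq0}, obtains $O(\Delta t)$ bounds on all three $L^2([0,T]\times\dom)$ differences, and identifies the limits by uniqueness of weak limits. Your explicit handling of the boundary increment $u_h^0-\tu_h^0$ is a harmless refinement that the paper passes over silently.
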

\begin{proof}
		We have
	\begin{align*}
		\norm{\bar{u}_h-\tu_h}_{L^2([0,T]\times\dom)}^2 & = \Delta t\sum_{m=0}^{N-1}\int_{\dom}\left|\tu^m_h-\tu^{m+1}_h\right|^2 dx \\
		& \leq 2 \Delta t\sum_{m=0}^{N-1}\int_{\dom}\left|u^m_h-\tu^{m+1}_h\right|^2 + \left|\tu_h^m-u_h^m\right|^2 dx \\
		&\leq C\Delta t (E_h^0+\norm{f}_{L^2([0,T]\times\dom)}^2),
	\end{align*}
	by the energy balance, Lemma~\ref{lem:discenergyestimate} and Remark~\ref{rem:fneq0}. This shows~\eqref{eq:utildeubarconv}. 
	Using this and that both $\bar{u}_h$ and $\tu_h$ converge weakly, it then follows easily that for any test function $\varphi\in L^2([0,T]\times\dom) $,
	\begin{multline*}
		\int_0^T\!\!\int_{\dom}\bar{u}\cdot \varphi\, dx dt = \lim_{h\to 0} \int_0^T\!\!\int_{\dom}\bar{u}_h\cdot \varphi\, dx dt\\
		= \lim_{h\to 0}\int_0^T\!\!\int_{\dom}(\bar{u}_h-\tu_h)\cdot \varphi\, dx dt+ \lim_{h\to 0}\int_0^T\!\!\int_{\dom}\tu_h\cdot \varphi \,dx dt =\int_0^T\!\!\int_{\dom}\tu\cdot \varphi\, dx dt,
	\end{multline*}
	hence $\tu=\bar{u}$ a.e. in $[0,T]\times\dom$ and in $L^2([0,T]\times\dom)$.
In the same way, we compute
	\begin{align*}
		\norm{\hu_h-u_h}_{L^2([0,T]\times\dom)}^2 & = \sum_{m=0}^{N-1}\int_{t^m}^{t^{m+1}}\!\!\int_{\dom}\left|\frac{t-t^m}{\Delta t}(u^{m+1}_h-\tu^{m+2}_h)+ \frac{t^{m+1}-t}{\Delta t}(u^m_h - \tu^{m+1}_h)\right|^2 dx dt\\
		&\leq 2\Delta t\sum_{m=0}^{N-1}\int_{\dom}\left|u^{m+1}_h-\tu^{m+2}_h\right|^2+\left|\tu^{m+1}_h-u^m_h\right|^2 dx \\
		&\leq C\Delta t (E_h^0+\norm{f}_{L^2([0,T]\times\dom)}^2).
	\end{align*}
	Thus as $\Delta t,h \to 0$, we obtain~\eqref{eq:uubarconv}, and in the same way as before, one shows that the limits $u$ and $\hu$ agree almost everywhere. Lastly, we have
	 	\begin{align*}
	 	\norm{\hu_h-\tu_h}_{L^2([0,T]\times\dom)}^2 & = \sum_{m=0}^{N-1}\int_{t^m}^{t^{m+1}}\!\!\int_{\dom}\left|\frac{t-t^m}{\Delta t}(\tu^{m+2}_h-\tu^{m+1}_h)\right|^2 dx dt\\
	 	&\leq \Delta t\sum_{m=0}^{N-1}\int_{\dom}\left|\tu^{m+2}_h-\tu^{m+1}_h\right|^2 dx \\
	 	&\leq 2\Delta t\sum_{m=0}^{N-1}\int_{\dom}\left|u^{m+1}_h-\tu^{m+2}_h\right|^2+\left|\tu^{m+1}_h-u^{m+1}_h\right|^2 dx \\
	 	&\leq C\Delta t (E_h^0+\norm{f}_{L^2([0,T]\times\dom)}^2),
	 \end{align*}
	 which implies~\eqref{eq:utildeuhatconv} and in the same way as before, that the limits $\tu$ and $\hu$ agree almost everywhere.
\end{proof}
As a next step, we show that the limit $u$ is weakly divergence free, i.e., $(u(t),\Grad q)=0$ for almost every $t\in [0,T]$ and $q\in H^1(\dom)$. Since $u\in L^2(0,T;H^1_0(\dom))$ this then implies after integration by parts (the boundary term vanishes) that $\Div u=0$ a.e.. This follows from the next lemma:
\begin{lemma}\label{lem:divfree2}
	Any weak limit $u$ of the sequence $\{u_h\}_{h>0}$ satisfies for a.e. $t\in [0,T]$,
	\begin{equation*}
		(u,\Grad q) = 0,\quad \forall \, q\in H^1(\dom).
	\end{equation*}
	
\end{lemma}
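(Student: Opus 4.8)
The plan is to exploit the discrete incompressibility constraint~\eqref{eq:projection2}, which states that every corrected velocity $u_h^m$ satisfies $(\Div u_h^m,q_h)=0$ for all $q_h\in\Ph$; note this includes $u_h^0$, by the initial projection step. Since the piecewise linear interpolant $u_h(t)$ is, at each $t$, a convex combination of two consecutive nodal values, linearity of the divergence gives $(\Div u_h(t),q_h)=0$ for all $q_h\in\Ph$ and all $t\in[0,T]$. The difficulty is that we have no uniform $H^1$ bound on the corrected velocities $u_h^m$ — only the $L^\infty(0,T;L^2(\dom))$ bound — so we cannot control $\Div u_h$ directly in $L^2(\dom)$. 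This is the main obstacle, and it dictates the whole strategy: we must move the derivative onto the (regular) test function and absorb the lack of regularity of $u_h$ into the finite element approximation error.

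Concretely, I would first fix $q\in H^s(\dom)$ with $2\le s\le k+1$, and observe that since $\Grad q$ is unchanged when a constant is added to $q$, we may assume $q\in L^2_0(\dom)$, so that $\PPh q\in\Ph$ behaves well. Because $u_h(t)\in\Uh\subset H^1_0(\dom)^d$, integration by parts carries no boundary contribution, whence $(\Div u_h(t),\psi)=-(u_h(t),\Grad\psi)$ for every $\psi\in H^1(\dom)$. Testing the discrete constraint with $\psi=\PPh q$ gives $(u_h(t),\Grad\PPh q)=0$, so that
\[
(u_h(t),\Grad q)=(u_h(t),\Grad(q-\PPh q)).
\]
Estimating the right-hand side by Cauchy--Schwarz together with the $H^1$ approximation property~\eqref{eq:H1approx} of $\PPh$ yields
\[
\left|(u_h(t),\Grad q)\right|\le \norm{u_h(t)}_{L^2}\,\left|q-\PPh q\right|_{H^1}\le C\,h^{s-1}\norm{u_h}_{L^\infty(0,T;L^2)}\,|q|_{H^s},
\]
which tends to zero uniformly in $t$ as $h\to 0$, by the a priori bound on $u_h$.

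Next I would integrate against an arbitrary $\phi\in C_c^\infty(0,T)$ and pass to the limit. Since $\phi(\Cdot)\Grad q\in L^1(0,T;L^2(\dom))$ and $u_h\weakstar u$ in $L^\infty(0,T;L^2(\dom))$, the weak-$\star$ convergence gives $\int_0^T\phi\,(u_h,\Grad q)\,dt\to\int_0^T\phi\,(u,\Grad q)\,dt$, while the display above shows the left-hand side vanishes in the limit. Hence $\int_0^T\phi(t)\,(u(t),\Grad q)\,dt=0$ for all $\phi\in C_c^\infty(0,T)$, so $(u(t),\Grad q)=0$ for a.e.\ $t$, for each fixed $q\in H^s(\dom)$ with $s\ge 2$.

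Finally, to reach all $q\in H^1(\dom)$ I would use a density argument. For a.e.\ fixed $t$ the linear functional $q\mapsto(u(t),\Grad q)$ is bounded on $H^1(\dom)$ with norm at most $\norm{u(t)}_{L^2}$. Choosing a countable subset of $H^2(\dom)$ dense in $H^1(\dom)$ and discarding the countable union of null sets on which the identity fails for these test functions, we obtain a single null set outside of which $(u(t),\Grad q)=0$ for all $q$ in the dense subset, and therefore, by continuity of the functional, for every $q\in H^1(\dom)$. The only genuinely nontrivial ingredient is the integration-by-parts/approximation-error maneuver forced by the absence of an $H^1$ bound on the corrected velocity; the remaining steps are routine.
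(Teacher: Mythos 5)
Your proposal is correct and takes essentially the same route as the paper's proof: replace $q$ by $\PPh q$ using the discrete constraint~\eqref{eq:projection2} together with integration by parts (valid since $u_h(t)\in H^1_0(\dom)$), and then estimate $(u_h,\Grad(q-\PPh q))$ via the $L^\infty(0,T;L^2)$ bound and the approximation properties of $\PPh$. The additional bookkeeping you supply --- reducing to $q\in L^2_0(\dom)$ so that $\PPh q$ is a meaningful approximant, integrating against $\phi\in C_c^\infty(0,T)$ to convert the weak-$\star$ convergence into an a.e.-in-$t$ statement, and the countable-density argument extending from $H^2(\dom)$ to all of $H^1(\dom)$ --- merely makes explicit points the paper's shorter proof leaves implicit, and is sound.
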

\begin{proof}
	Since $u$ is the weak limit of $\{{u}_h\}_{h>0}$, we have (up to a subsequence)
	\begin{equation*}
		(u,\Grad q) = \lim_{h\to 0} ({u}_h,\Grad q),
	\end{equation*}
	for any $q\in H^1(\dom)$ for a.e. $t\in [0,T]$. Then since ${u}_h(t)\in H^1_0(\dom)$ for any fixed $h>0$ and $t\in [0,T]$, we can integrate by parts in the scheme~\eqref{eq:projection2} to obtain
	\begin{equation}\label{eq:weakapproxdivfree}
		0=(\Div {u}_h,q) = -({u}_h,\Grad q),
	\end{equation}
	for $q\in \Ph$.
	Then we can write
	\begin{equation*}
		|({u}_h ,\Grad q)| = 	|({u}_h ,\Grad (q-\PPh q)) |\leq \norm{{u}_h}_{L^2}\norm{\Grad (q-\PPh q)}_{L^2}\stackrel{h\to 0}{\longrightarrow} 0,
	\end{equation*}
	since ${u}_h\in L^\infty([0,T];L^2(\dom))$ uniformly in $h>0$ and $\PPh q \to q$ in $H^1(\dom)$ as $h\to 0$ for any $q\in H^1(\dom)$ by~\eqref{eq:L2projproperties}. Thus, in the limit $(u,\Grad q) = 0$ for any $q\in H^1(\dom)$ and a.e. $t\in [0,T]$.
\end{proof}

Next, we would like to apply the Aubin-Lions-Simon lemma~\cite{Simon1987} and obtain a strongly convergent subsequence of $\{u_h\}_{h>0}$ in $L^2([0,T]\times\dom)$. However,   we have the time continuity for $\tu_h$ with values in the dual space of $H^1_{\Div}(\dom)\cap H^s(\dom)$ and the spatial regularity in $H^1_0(\dom)$ for the sequence of approximations $\{\tu_h\}_{h>0}$, but they are not divergence free. In order to apply the Aubin-Lions lemma, we would either need them to be divergence free or uniform bounds $\partial_t \tu_h \in L^2(0,T;(H^1_0(\dom)\cap H^s(\dom))^*)$ for some $s\in \N$. However, this does not seem possible to achieve due to the lack of pressure bounds.
We therefore need a variant of a classical lemma by Lions~\cite[Ch.III, Lem. 2.1]{Temam1977}, also called Ehrling's lemma, adapted to our setting. 
The lemma we need is inspired by a similar result in~\cite{Eymard2024}. We first define the space
\begin{equation}
	\label{eq:spaceV}
	\Vh = \left\{v\in L^2(\dom)\, ,\, (v,\Grad q) = 0,\quad \forall q\in \Ph \right\}.
\end{equation}
Note that this is not a finite element space and not finite dimensional. We clearly have $L^2_{\Div}(\dom)\subset \Vh$. We denote the orthogonal projection from $L^2(\dom)$ onto $\Vh$ by $\PVh:L^2(\dom)\to \Vh$, i.e., we have for any $v \in L^2(\dom)$ that
\begin{equation*}
	(\PVh v,w) = (v,w),\quad \forall w\in \Vh.
\end{equation*}
Moreover, $\PVh$ is the identity on $\Vh$.
This implies that
\begin{equation*}
	\norm{\PVh v - v}_{L^2(\dom)}^2 = \norm{\PVh v }_{L^2(\dom)}^2 + \norm{v}_{L^2(\dom)}^2 - 2(\PVh v,v) = \norm{v}_{L^2(\dom)}^2 -  \norm{\PVh v }_{L^2(\dom)}^2,
\end{equation*}
thus
\begin{equation*}
	\norm{\PVh v}_{L^2(\dom)}\leq \norm{v}_{L^2(\dom)}. 
\end{equation*}
We note that due to~\eqref{eq:weakapproxdivfree}, we have $\PVh \tu^{m+1}_h = u^{m+1}_h$ for all $m$ and hence $u_h = \PVh \hu_h(\cdot -\Delta t)$.

We also have the following simple `commuting' property for $\PVh$ and $\Pl: L^2(\dom)\to L^2_{\Div}(\dom)$, the Leray projector:
\begin{lemma}
	\label{lem:commuting}
	We have that
	\begin{equation}\label{eq:commuting}
		\Pl \PVh v = \PVh \Pl v = \Pl v,\quad \forall \, v\in L^2(\dom).
	\end{equation}
\end{lemma}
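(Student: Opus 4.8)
The plan is to derive both identities from a single structural observation: the space of genuinely divergence-free fields sits inside the discrete space $\Vh$, that is, $L^2_{\Div}(\dom)\subseteq \Vh$. Indeed, for $w\in L^2_{\Div}(\dom)$ the conditions $\Div w=0$ and $w\cdot n|_{\partial\dom}=0$ give, after integration by parts, $(w,\Grad q)=-(\Div w,q)=0$ for every $q\in H^1(\dom)$, and in particular for every $q\in \Ph\subset H^1(\dom)$; hence $w\in\Vh$. Combined with the two orthogonality characterizations — $\PVh$ is the orthogonal projection onto $\Vh$, and $\Pl$ is the orthogonal projection onto $L^2_{\Div}(\dom)$, so that $\ker\Pl=(L^2_{\Div}(\dom))^\perp$ is the $L^2$-closure of gradients arising from the Helmholtz--Hodge decomposition — this inclusion will yield \eqref{eq:commuting} almost immediately.

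For the first equality, I note that for any $v\in L^2(\dom)$ we have $\Pl v\in L^2_{\Div}(\dom)\subseteq\Vh$. Since $\PVh$ acts as the identity on $\Vh$ (as recorded just above the statement of the lemma), it follows that $\PVh\Pl v=\Pl v$.

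For the second equality, I would argue by complements. From $L^2_{\Div}(\dom)\subseteq\Vh$ I obtain the reverse inclusion of orthogonal complements, $\Vh^\perp\subseteq (L^2_{\Div}(\dom))^\perp=\ker\Pl$. By definition of the orthogonal projection $\PVh$, the residual $v-\PVh v$ lies in $\Vh^\perp$, hence in $\ker\Pl$, so that $\Pl(v-\PVh v)=0$, i.e. $\Pl\PVh v=\Pl v$. Chaining the two identities gives $\Pl\PVh v=\PVh\Pl v=\Pl v$, as claimed.

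The argument is short, and the only point requiring a little care — the main (modest) obstacle — is the identification $\ker\Pl=(L^2_{\Div}(\dom))^\perp$ together with the characterization of $L^2_{\Div}(\dom)$ as exactly those $L^2$ fields orthogonal to all gradients $\Grad q$, $q\in H^1(\dom)$. Both follow from the Helmholtz--Hodge decomposition $f=\Grad g+h$ recalled in Section~\ref{sec:prelim}, which guarantees that $L^2(\dom)$ splits as the orthogonal direct sum of $L^2_{\Div}(\dom)$ and the closed subspace of gradients; no further estimates are needed.
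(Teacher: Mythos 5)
Your proof is correct and takes essentially the same approach as the paper: both rest on the inclusion $L^2_{\Div}(\dom)\subseteq\Vh$, and your derivation of $\PVh\Pl v=\Pl v$ (namely $\Pl v\in L^2_{\Div}(\dom)\subseteq\Vh$ and $\PVh$ acts as the identity on $\Vh$) is verbatim the paper's. For the remaining identity the paper computes $(\Pl\PVh v,w)=(\PVh v,\Pl w)=(v,\PVh\Pl w)=(v,\Pl w)=(\Pl v,w)$ using self-adjointness of the two projections, and your orthogonal-complement argument---$\Vh^\perp\subseteq(L^2_{\Div}(\dom))^\perp=\ker\Pl$, hence $\Pl(v-\PVh v)=0$---is simply the geometric equivalent of that duality computation, so the two proofs coincide in substance.
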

\begin{proof}
	Since $L^2_{\Div}(\dom)\subset \Vh$ for any $h>0$, we have that $\PVh$ is the identity on $L^2_{\Div}(\dom)$ and hence $\PVh \Pl v = \Pl v$. For the other identity, we take $v,w\in L^2(\dom)$ and compute, using that $\Pl$ and $\PVh$ are self-adjoint,
	\begin{equation*}
		(\Pl\PVh v,w) = (\PVh v, \Pl w) = (v,\PVh\Pl w) = (v,\Pl w) = (\Pl v,w),
	\end{equation*}
	implying the claim.
\end{proof}
Using this, we will prove:
\begin{lemma}\label{lem:lionslike}
	Denote $Z = (H^k(\dom)\cap H^1_{\Div}(\dom))^*$ for some $k\in \N$. Then for any $\eta>0$ there exists a constant $C_\eta>0$ and $h_\eta>0 $ such that for all $h\leq h_{\eta}$ we have
	\begin{equation}\label{eq:lionslike}
		\norm{\PVh w}_{L^2(\dom)}\leq \eta \norm{w}_{H^1_0(\dom)} + C_\eta \norm{w}_Z,\quad \forall w\in H^1_0(\dom).
	\end{equation}
\end{lemma}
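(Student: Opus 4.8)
The plan is to argue by contradiction and to reduce everything to a single purely spatial fact: that the projections $\PVh$ converge strongly to the Leray projector $\Pl$ as $h\to 0$. I first record the estimate underlying this. If $w=\Grad g$ is a pure gradient with $g\in H^1(\dom)\cap L^2_0(\dom)$ (we may fix the zero-mean representative since $\Grad g$ only depends on $g$ modulo constants), then for any $q\in\Ph$ we have $(\Grad q,\PVh\Grad g)=0$ by the definition \eqref{eq:spaceV} of $\Vh$, so that
\begin{equation*}
\norm{\PVh\Grad g}_{L^2(\dom)}^2=(\Grad g,\PVh\Grad g)=(\Grad(g-q),\PVh\Grad g)\leq\norm{\Grad(g-q)}_{L^2(\dom)}\norm{\PVh\Grad g}_{L^2(\dom)},
\end{equation*}
whence $\norm{\PVh\Grad g}_{L^2(\dom)}\leq\inf_{q\in\Ph}|g-q|_{H^1(\dom)}$. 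The density of $\bigcup_h\Ph$ in $H^1(\dom)\cap L^2_0(\dom)$ (a standard consequence of the approximation properties \eqref{eq:L2projproperties} of $\Ph$) then forces $\norm{\PVh\Grad g}_{L^2(\dom)}\to 0$. Combined with the commuting identity of Lemma~\ref{lem:commuting}, any $w\in L^2(\dom)$ decomposes via Helmholtz--Hodge as $w=\Pl w+\Grad g$, and $\PVh w=\Pl w+\PVh\Grad g\to\Pl w$ in $L^2(\dom)$. This is the key convergence I will use.

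With this in hand I would prove \eqref{eq:lionslike} by contradiction. Suppose it fails for some $\eta_0>0$; then there exist $h_n\to 0$ and $0\neq w_n\in H^1_0(\dom)$ with
\begin{equation*}
\norm{\mathcal{P}_{\mathbb{V}_{h_n}}w_n}_{L^2(\dom)}>\eta_0\norm{w_n}_{H^1_0(\dom)}+n\norm{w_n}_Z.
\end{equation*}
Normalizing $\norm{w_n}_{H^1_0(\dom)}=1$ and using the contraction property $\norm{\mathcal{P}_{\mathbb{V}_{h_n}}w_n}_{L^2}\leq\norm{w_n}_{L^2}\leq C_P$ (Poincar\'e), I obtain both $\norm{\mathcal{P}_{\mathbb{V}_{h_n}}w_n}_{L^2}>\eta_0$ and $\norm{w_n}_Z\leq C_P/n\to 0$.

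Since $\{w_n\}$ is bounded in $H^1_0(\dom)$ and $\dom$ is a bounded Lipschitz domain, the compact Rellich embedding $H^1_0(\dom)\hookrightarrow\hookrightarrow L^2(\dom)$ yields a subsequence with $w_n\to w$ strongly in $L^2(\dom)$. The embedding $L^2(\dom)\hookrightarrow Z$ is continuous, since for $\phi\in H^k(\dom)\cap H^1_{\Div}(\dom)$ one has $|(w,\phi)|\leq\norm{w}_{L^2}\norm{\phi}_{L^2}$, giving $\norm{w}_Z\leq\norm{w}_{L^2}$; hence $\norm{w}_Z=\lim_n\norm{w_n}_Z=0$. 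As $H^k(\dom)\cap H^1_{\Div}(\dom)\supset C^\infty_{c,\Div}(\dom)$ is dense in $L^2_{\Div}(\dom)$, this forces $w\perp L^2_{\Div}(\dom)$, i.e.\ $\Pl w=0$. Finally I split
\begin{equation*}
\mathcal{P}_{\mathbb{V}_{h_n}}w_n=\mathcal{P}_{\mathbb{V}_{h_n}}(w_n-w)+\mathcal{P}_{\mathbb{V}_{h_n}}w,
\end{equation*}
where the first term is bounded by $\norm{w_n-w}_{L^2(\dom)}\to 0$ and the second tends to $\Pl w=0$ by the key convergence above (applied along $h_n$ to the fixed limit $w$). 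Thus $\norm{\mathcal{P}_{\mathbb{V}_{h_n}}w_n}_{L^2}\to 0$, contradicting $\norm{\mathcal{P}_{\mathbb{V}_{h_n}}w_n}_{L^2}>\eta_0$, and \eqref{eq:lionslike} follows.

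The main obstacle is the strong convergence $\PVh\to\Pl$ and, in particular, the interaction of the two limiting parameters: the mesh size enters through the space $\Vh$ itself. A direct decomposition $w=\Pl w+\Grad g$ applied to an arbitrary $w\in H^1_0(\dom)$ would require controlling $\norm{\Pl w}_{H^1}$ by $\norm{w}_{H^1_0}$, which can fail on a general Lipschitz domain since the Leray projector need not be $H^1$-bounded. Framing the argument by contradiction sidesteps this: the compactness is applied to the full bounded sequence $\{w_n\}\subset H^1_0(\dom)$ rather than to its (possibly non-$H^1$) divergence-free part, and the Helmholtz decomposition is invoked only for the fixed limit $w$, where mere $g\in H^1(\dom)$ suffices to give $\inf_{q\in\Ph}|g-q|_{H^1(\dom)}\to 0$. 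The one point needing care is that the density/approximation property of $\Ph$ in $H^1(\dom)$ holds for the chosen finite element pair, which is guaranteed by \eqref{eq:L2projproperties}.
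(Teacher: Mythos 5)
Your proof is correct, and while it shares the paper's overall skeleton---argue by contradiction, normalize, use the compact embedding $H^1_0(\dom)\subset\subset L^2(\dom)$ to extract a strong $L^2$ limit $w$, and deduce $\Pl w=0$ from $\norm{w}_Z=0$---it resolves the decisive step by a genuinely different mechanism. The paper works directly with the doubly-indexed sequence $\PN w_N$: it first establishes the weak convergence $\PN w_N\weak\Pl w$ in $L^2(\dom)$ by testing against the Helmholtz decomposition of an arbitrary $\varphi$, and then upgrades to strong convergence via the norm identity $\norm{\PN w_N}_{L^2}^2=(\PN w_N,w_N)\to(\Pl w,w)=\norm{\Pl w}_{L^2}^2$. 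You instead isolate a cleaner, quantitative operator-convergence statement---for every \emph{fixed} $v\in L^2(\dom)$ one has $\PVh v\to\Pl v$ strongly, with the explicit bound $\norm{\PVh v-\Pl v}_{L^2(\dom)}=\norm{\PVh\Grad g}_{L^2(\dom)}\le\inf_{q\in\Ph}|g-q|_{H^1(\dom)}$ for the gradient part $\Grad g$ of $v$---and then pass from the fixed limit $w$ to the sequence $w_n$ via the uniform nonexpansiveness of orthogonal projections, $\norm{\mathcal{P}_{\mathbb{V}_{h_n}}(w_n-w)}_{L^2}\le\norm{w_n-w}_{L^2}\to 0$. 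The underlying ingredients are the same as the paper's (Lemma~\ref{lem:commuting} and $\Grad\PPh g\to\Grad g$ in $L^2$, which, as you correctly note, requires a density argument on top of~\eqref{eq:L2projproperties} for $\Ph$, since~\eqref{eq:H1approx} with $s=1$ alone gives only boundedness), but your reorganization buys a reusable convergence lemma for $\PVh\to\Pl$ with a rate in terms of the best $H^1$-approximation of the pressure potential, and a shorter endgame that avoids identifying the weak limit of the moving sequence $\PN w_N$. Two smaller deviations are also sound: you identify $\Pl w=0$ directly from the density of $C^\infty_{c,\Div}(\dom)$ in $L^2_{\Div}(\dom)$, rather than the paper's dual-norm manipulation with the Leray projector, and you normalize $\norm{w_n}_{H^1_0}=1$ instead of $\norm{\PN w_N}_{L^2}=1$, which works equally well since the Poincar\'e inequality keeps $\norm{\mathcal{P}_{\mathbb{V}_{h_n}}w_n}_{L^2}$ bounded and hence forces $\norm{w_n}_Z\le C/n\to 0$.
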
	
\begin{proof}
	The proof of this lemma is by contradiction. Assume that~\eqref{eq:lionslike} does not hold. Then there exists $\eta>0$ such that for all $C>0$ and $\bar{h}>0$ there exists $0<h<\bar{h}$ and $v\in H^1_0(\dom)$ such that
	\begin{equation*}
		\norm{\PVh v}_{L^2(\dom)}> \eta \norm{v}_{H^1_0(\dom)} + C \norm{v}_Z.
	\end{equation*}
	We generate a sequence $\{v_N\}_N$ by taking $C=\bar{h}^{-1} =n\in \N$ and $h^{-1}:=N\geq n$ strictly increasing (so that $h$ is strictly decreasing), such that
	\begin{equation*}
		\norm{\PN v_N}_{L^2(\dom)}> \eta \norm{v_N}_{H^1_0(\dom)} + n \norm{v_N}_Z,
	\end{equation*}
	where we denoted $\PN:=\PVh = \mathcal{P}_{\mathbb{V}_{N^{-1}}}$.
	We define the mapping $N = \tau(n)$ which is strictly increasing and hence has a strictly increasing inverse $\tau^{-1}$. Then the last statement can be rewritten in terms of $N$ as
	\begin{equation*}
		\norm{\PN v_N}_{L^2(\dom)}> \eta \norm{v_N}_{H^1_0(\dom)} + \tau^{-1}(N)\norm{v_N}_Z.
	\end{equation*}
	Next, we define $w_N =v_N / \norm{\PN v_N}_{L^2(\dom)}$. Clearly $\norm{\PN w_N}_{L^2(\dom)} = 1$. Therefore, the last statement translates to
	\begin{equation*}
		1>\eta \norm{w_N}_{H^1_0} + \tau^{-1}(N)\norm{w_N}_{Z}.
	\end{equation*}
	This implies that $\norm{w_N}_{H^1_0}$ is uniformly bounded in $N$ and therefore $\{w_N\}_{N}$ has a subsequence that converges strongly in $L^2(\dom)$ to a limit $w\in H^1_0(\dom)$, due to the compact embedding of $H^1_0$ into $L^2$. We still denote the subsequence by $N$ for convenience of notation. Then we must have that $\norm{w_N}_Z\to 0$ as $N\to \infty$ since $\tau^{-1}(N)\to \infty$ and the left hand side is bounded. Hence $\norm{w}_Z = 0$. 
	Since 
	\begin{equation*}
		\begin{split}
			\norm{w}_Z &= \sup_{\varphi \in H^k\cap H^1_{\Div}(\dom)} \frac{|(w,\varphi)|}{\norm{\varphi}_{H^k\cap H_{\Div}^1}} \\
			&= \sup_{\varphi \in H^k\cap H^1_{0}(\dom)} \frac{|(w,\Pl\varphi)|}{\norm{\Pl\varphi}_{H^k\cap H_0^1}}\\
			& = \sup_{\varphi \in H^k\cap H^1_{0}(\dom)} \frac{|(\Pl w,\varphi)|}{\norm{\Pl\varphi}_{H^k\cap H^1_0}} \\
			&\geq \sup_{\varphi \in H^k\cap H^1_{0}(\dom)} \frac{|(\Pl w,\varphi)|}{\norm{ \varphi}_{H^k\cap H^1_0}} \\
			&= \norm{\Pl w}_{(H^k\cap H^1_0)^*}
		\end{split}
	\end{equation*}
	using the properties of the Leray-projector $\Pl$, we obtain that
	\begin{equation*}
		\norm{\Pl w}_{(H^k\cap H^1_0)^*}=0, 
	\end{equation*}
	i.e., $\Pl w =0$ in $(H^k\cap H^1_0)^*$. That means that
	\begin{equation*}
		(\Pl w,\varphi) =0,\quad \forall \, \varphi\in H^k(\dom)\cap H^1_0(\dom).
	\end{equation*}
	Since $\Pl w\in L^2(\dom)$ and $H^1_0(\dom)\cap H^k(\dom)$ is dense in $L^2(\dom)$, we can extend this identity to $\varphi \in L^2(\dom)$ and conclude that $\norm{\Pl w}_{L^2(\dom)} = 0$. Next, we want to show that $\PN w_N\to \Pl w$ in $L^2(\dom)$ which would then imply a contradiction, since $\norm{\PN w_N}_{L^2}=1\neq 0=\norm{\Pl w}_{L^2}$. We start by showing that $\PN w_N \weak \Pl w$ in $L^2(\dom)$. Let us take $\varphi\in L^2(\dom)$ and decompose using the Helmholtz-decomposition as
	$\varphi =\Pl \varphi + \Grad g$ where $g\in H^1(\dom)$. Then we have, by Lemma~\ref{lem:commuting}, and using that $w_N\to w$ in $L^2(\dom)$, and that $\Pl$ is self-adjoint,
	\begin{equation*}
		(\PN w_N, \Pl \varphi ) = (w_N,\PN \Pl \varphi) = (w_N,\Pl \varphi) \stackrel{N\to \infty}{\longrightarrow} (w,\Pl\varphi) = (\Pl w,\varphi).
	\end{equation*}
	On the other hand, we have, using that $\Grad \PPh g \to \Grad g$ in $L^2(\dom)$ as $h\to 0$, and that $(\PN w_N,\Grad q) = 0$ for all $q\in \mathbb{P}_{N^{-1}}$
	\begin{equation*}
		|	(\PN w_N,\Grad g)| =| (\PN w_N,\Grad (g- \mathcal{P}_{\mathbb{P}_{N^{-1}}}g))| \leq \norm{\PN w_N}_{L^2}\norm{\Grad (g- \mathcal{P}_{\mathbb{P}_{N^{-1}}}g)}_{L^2} \stackrel{N\to \infty}{\longrightarrow} 0.
	\end{equation*}
	Combining these two insights, we obtain
	\begin{equation*}
		(\PN w_N,\varphi)\stackrel{N\to \infty}{\longrightarrow} (\Pl w,\varphi),
	\end{equation*}
	thus $\PN w_N\weak \Pl w$ in $L^2(\dom)$. Next, we consider the norm of $\PN w_N$:
	\begin{equation*}
		\norm{\PN w_N}_{L^2}^2 = (\PN w_N,\PN w_N) = (\PN w_N,w_N) \stackrel{N\to \infty}{\longrightarrow} (\Pl w,w) = \norm{\Pl w}_{L^2}^2,
	\end{equation*}
	where we used that $w_N\to w$ strongly and $\PN w_N\weak \Pl w$ weakly in $L^2$ and the properties of the orthogonal projections $\Pl$ and $\PN$. This implies that $\PN w_N\to \Pl w$ in $L^2$ as follows:
	\begin{equation*}
		\norm{\PN w_N - \Pl w}_{L^2}^2 = \norm{\PN w_N}_{L^2}^2 + \norm{\Pl w}_{L^2}^2 - 2(\PN w_N, \Pl w)\stackrel{N\to \infty}{\longrightarrow} 0,
	\end{equation*}
	using the weak convergence of $\PN w_N$ and the convergence of the $L^2$-norm. Thus, we conclude that 
	\begin{equation*}
		1=\lim_{N\to \infty}	\norm{\PN w_N}_{L^2}^2 =\norm{\Pl w}_{L^2}^2=0,
	\end{equation*}
	which yields the desired contradiction.
\end{proof}

With this technical lemma at hand, we can now prove the following modified Aubin-Lions lemma:

\begin{lemma}
	\label{lem:aubinlionsmodified}
	Let  $\{\hu_h\}_{h>0}, \{u_h\}_{h>0}$ be  sequences of functions $\hu_h,u_h: [0,T]\times\dom\to \R^d$ satisfying
	\begin{equation}\label{eq:samelimit}
		\lim_{h\to 0}\norm{ u_h-\hu_h}_{L^2([0,T]\times \dom)}=0,	
		\end{equation}
	and
	\begin{equation}
		\label{eq:aprioriAL}\begin{split}
			& u_h\in L^2(0,T;L^2(\dom)\cap \Vh),\\
			&\hu_h\in  L^2(0,T;H^1_0(\dom)),\\
			&\partial_t \hu_h\in L^2(0,T;(H^k(\dom)\cap H^1_{\Div}(\dom))^*),	
		\end{split}
	\end{equation}
	uniformly in $h>0$ where $k\in \N$. Then the sequences $\{\hu_h\}_{h>0}$ and $\{u_h\}_{h>0}$ are precompact in $L^2([0,T]\times\dom)$.
\end{lemma}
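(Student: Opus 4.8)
The plan is to prove precompactness of the projected sequence $v_h := \PVh \hu_h$ in $L^2([0,T]\times\dom)$ first, and then transfer this to $\{\hu_h\}$ and $\{u_h\}$ using the closeness hypothesis \eqref{eq:samelimit}. The reason for passing to $v_h$ is that the interpolation inequality of Lemma~\ref{lem:lionslike}, which is the only tool bridging the strong $H^1_0(\dom)$ regularity and the weak $Z=(H^k(\dom)\cap H^1_{\Div}(\dom))^*$ control of the time derivative, carries the projector $\PVh$ on its left-hand side; the unprojected inequality fails because $Z$ does not see gradient fields. Since $u_h(t)\in\Vh$ for a.e. $t$, we have $\PVh u_h = u_h$, so that $\hu_h - \PVh\hu_h = (\hu_h - u_h) - \PVh(\hu_h - u_h)$ and hence $\norm{\hu_h - v_h}_{L^2([0,T]\times\dom)} \le 2\norm{\hu_h - u_h}_{L^2([0,T]\times\dom)} \to 0$ by \eqref{eq:samelimit}. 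Consequently $v_h$, $\hu_h$ and $u_h$ share the same $L^2$ limit points, and precompactness of $\{v_h\}$ will immediately yield precompactness of the other two: given any subsequence, extract a sub-subsequence along which $v_h$ converges, and the closeness estimate forces $\hu_h$ and $u_h$ to converge to the same limit.

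To show $\{v_h\}$ is precompact I would apply the compactness criterion of Simon~\cite{Simon1987}, which for a bounded family in $L^2(0,T;L^2(\dom))$ requires two things: (i) relative compactness in $L^2(\dom)$ of the time averages $\int_{t_1}^{t_2} v_h\,dt$ for every $0<t_1<t_2<T$, and (ii) uniform-in-$h$ smallness of the time translations $\norm{v_h(\cdot+\tau)-v_h}_{L^2(0,T-\tau;L^2(\dom))}$ as $\tau\to 0$. For (i), since $\PVh$ commutes with integration in time, $\int_{t_1}^{t_2} v_h\,dt = \PVh g_h$ with $g_h := \int_{t_1}^{t_2}\hu_h\,dt$ bounded in $H^1_0(\dom)$ by Cauchy--Schwarz and the uniform $L^2(0,T;H^1_0(\dom))$ bound. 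By Rellich a subsequence of $g_h$ converges strongly in $L^2(\dom)$ to some $g$, and then the weak--strong argument already used in the proof of Lemma~\ref{lem:lionslike} (namely $\PVh g_h \weak \Pl g$ together with $\norm{\PVh g_h}_{L^2}^2 = (\PVh g_h, g_h)\to(\Pl g,g)=\norm{\Pl g}_{L^2}^2$) upgrades this to $\PVh g_h\to\Pl g$ strongly in $L^2(\dom)$; this is exactly where the commuting property of Lemma~\ref{lem:commuting} enters. Thus the time averages are relatively compact.

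Condition (ii) is the crux. I would apply Lemma~\ref{lem:lionslike} pointwise in $t$ to $w = \hu_h(t+\tau)-\hu_h(t)\in H^1_0(\dom)$, giving for $h\le h_\eta$
\begin{equation*}
	\norm{\PVh(\hu_h(t+\tau)-\hu_h(t))}_{L^2} \le \eta\,\norm{\hu_h(t+\tau)-\hu_h(t)}_{H^1_0} + C_\eta\,\norm{\hu_h(t+\tau)-\hu_h(t)}_Z .
\end{equation*}
Squaring, integrating in $t$, and using $(a+b)^2\le 2a^2+2b^2$, the first term contributes $\le C\eta^2$ thanks to the uniform $L^2(0,T;H^1_0(\dom))$ bound, while the second term is controlled by writing $\hu_h(t+\tau)-\hu_h(t)=\int_t^{t+\tau}\partial_s\hu_h\,ds$, so that a Cauchy--Schwarz and Fubini computation bounds it by $C_\eta^2\,\tau^2\norm{\partial_t\hu_h}_{L^2(0,T;Z)}^2 \le C\,C_\eta^2\,\tau^2$. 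An $\varepsilon$-argument then finishes: given $\varepsilon>0$, fix $\eta$ small so that the first term is $<\varepsilon/2$ uniformly, then choose $\tau$ small so that the second term is $<\varepsilon/2$; the finitely many indices with $h>h_\eta$ are handled by continuity of translation in $L^2$ and shrinking $\tau$ further. The main obstacle is precisely this step, because the only available time-derivative bound lives in the weak dual space $Z$ adapted to divergence-free test functions, and it can be converted into $L^2(\dom)$ control only after the projection $\PVh$ is inserted; reconciling the resulting $h$-dependent defect $\hu_h-\PVh\hu_h$ with the $\tau$-dependent compactness is what forces the indirect route through $v_h$ rather than a direct verification of the criterion for $\hu_h$. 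With (i) and (ii) established, Simon's criterion yields precompactness of $\{v_h\}$, and the transfer described above completes the proof.
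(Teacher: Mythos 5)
Your proposal is correct, but it takes a genuinely different route from the paper's proof of this lemma. Both arguments pivot on the same two pillars --- the interpolation inequality of Lemma~\ref{lem:lionslike} and the identity $\PVh u_h = u_h$ combined with~\eqref{eq:samelimit} to transfer compactness between the projected and unprojected sequences --- but the mechanisms differ. The paper first extracts weak limits and invokes Lemmas~\ref{lem:samelimits} and~\ref{lem:divfree2} to identify a common divergence-free limit $u$, sets $v_h = \hu_h - u$, $w_h = u_h - u$, proves $v_h \to 0$ in $L^2(0,T;Z)$ by the classical pointwise-in-time decomposition $v_h(t) = a_h + b_h$ (mean value plus integrated time derivative, with compact embedding handling $a_h$ and the uniform $L^2(0,T;Z)$ bound on $v_h'$ handling $b_h$, followed by dominated convergence), applies Lemma~\ref{lem:lionslike} once to conclude $\PVh v_h \to 0$ strongly, and transfers via $v_h = (v_h - w_h) - \PVh(v_h - w_h) + \PVh v_h$. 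You instead verify Simon's criterion (Theorem~\ref{thm:Simonlemma}) directly for $\PVh \hu_h$: compactness of time averages via Rellich plus the weak--strong upgrade from the proof of Lemma~\ref{lem:lionslike} (where Lemma~\ref{lem:commuting} enters), and uniform translation continuity via Lemma~\ref{lem:lionslike} applied to differences together with the estimate $\norm{\hu_h(\cdot+\tau)-\hu_h}_{L^2(0,T-\tau;Z)} \leq \tau \norm{\partial_t \hu_h}_{L^2(0,T;Z)}$, which is correctly derived. What each approach buys: yours is self-contained at the level of the lemma's abstract hypotheses --- notably, you never need the weak limit to be divergence-free, whereas the paper's proof quietly imports Lemma~\ref{lem:divfree2}, i.e.\ scheme-specific information beyond the stated assumptions --- and it is in fact the same strategy the paper itself deploys for the incremental scheme in Section~\ref{sec:poisson} (Lemma~\ref{lem:timecont} combined with Theorem~\ref{thm:Simonlemma}); the paper's route avoids Simon's abstract theorem and delivers in one pass the identification of the strong limit with the weak limit $u$, which the subsequent convergence proof needs anyway. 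One small caveat: your dismissal of the indices $h > h_\eta$ as ``finitely many'' is valid only in the sequential reading $h_n \to 0$, but that is the interpretation of precompactness used throughout the paper (its own proof also concludes along subsequences), so the argument stands.
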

\begin{proof}
	The proof follows the proof of the Aubin-Lions-Simon lemma with modifications to account for the fact that $\partial_t \hu_h\in (H^k(\dom)\cap H^1_{\Div}(\dom))^*$ but $\hu_h$ is not divergence free.
	
	From the Banach-Alaoglu theorem, we obtain that $u_h\weak u$ and $\hu_h\weak \hu$ in $L^2([0,T]\times\dom)$ up to a subsequence, for the ease of notation still denoted by $h>0$. From Lemma~\ref{lem:divfree2}, we obtain that the limit $u$ is divergence free (and $\PVh u =u$), and from Lemma~\ref{lem:samelimits}, we obtain that $u=\hu\in L^2(0,T;L^2_{\Div}(\dom))\cap L^2(0,T;H^1_0(\dom))$ almost everywhere and in $L^2([0,T]\times\dom)$. We let $v_h:=\hu_h-u$ and $w_h=u_h-u$, then $v_h$ and $w_h$ satisfy~\eqref{eq:samelimit} and~\eqref{eq:aprioriAL} and converge weakly to $0$ in $L^2([0,T]\times\dom)$. Note also that $w_h = \PVh w_h$.
	Let us denote $Z =  (H^k(\dom)\cap H^1_{\Div}(\dom))^*$. We start by showing that for a.e. $t\in [0,T]$,
		\begin{equation}\label{eq:uniformZ}
		\norm{v_h(t)}_{Z}\leq C,
	\end{equation}
	and that
	\begin{equation}\label{eq:aeliminZ}
		\norm{v_h(t)}_Z \stackrel{h\to 0}{\longrightarrow} 0,\quad \text{a.e. }\, t\in [0,T].
	\end{equation}
	Then, by the Lebesgue dominated convergence theorem, 
	we can conclude that $v_n\to 0$ in $L^2(0,T;Z)$. Without loss of generality, we let $t=0$. Then for any $s,t>0$,
	\begin{equation*}
		\begin{split}
			v_h(0)&= v_h(t)-\int_0^t v'_h(\tau)d\tau\\
			& = \frac{1}{s}\left(\int_0^s v_h(t)dt -\int_0^s \int_0^t v_h'(\tau)d\tau dt\right)\\
			&:= a_h + b_h.	
	\end{split}\end{equation*}
	We can rewrite $b_h$ for all $h>0$ as
	\begin{equation*}
		b_h = -\frac{1}{s}\int_0^s (s-t) v_h'(t)dt.
	\end{equation*}
	Thus given any $\epsilon>0$, we can choose $s>0$ such that for all $n\in\N$ it holds
	\begin{equation*}
		\norm{b_h}_{Z}\leq \int_0^s \norm{v_h'(t)}_{Z} dt\leq \sqrt{s}\norm{v_h'}_{L^2(0,T;Z)} \leq \frac{\epsilon}{2},
	\end{equation*}
	since $v_h'\in L^2(0,T;Z)$ uniformly in $h>0$ by the assumptions.
	On the other hand, we have, since $Z\subset (H^1_0(\dom)\cap H^k(\dom))^*$ that
	\begin{equation*}
		\norm{a_h}_{Z}\leq C \norm{a_h}_{(H^1_0(\dom)\cap H^k(\dom))^*} \leq  C \norm{a_h}_{L^2(\dom)}= C \norm{\frac{1}{s}\int_0^s v_h(t)dt}_{L^2(\dom)}.
	\end{equation*}
	Due to the compact embedding $H^1_0(\dom)\subset\subset L^2(\dom)$ and~\eqref{eq:aprioriAL}, we have that $\frac{1}{s}\int_0^s v_h(t)dt\in H^1(\dom)$ is precompact in $L^2(\dom)$ and converges up to subsequence to zero in $L^2(\dom)$.
	Thus, we can find $\bar{h}>0$ such that for $h<\bar{h}$ (along a subsequence),
	\begin{equation*}
		\norm{a_h}_{Z}<\frac{\epsilon}{2}.
	\end{equation*}
	Thus~\eqref{eq:aeliminZ} holds.
	The uniform bound~\eqref{eq:uniformZ} follows in the same way, going over these estimates and observing that $a_h$ and $b_h$ are uniformly bounded independently of the time $t\in [0,T]$. 
	Hence, we obtain that $v_h(t)\to 0$ almost everywhere in $[0,T]$ and hence in $L^2(0,T;Z)$.
	Next, we have by Lemma~\ref{lem:lionslike},  that for any $\eta>0$, there exists $C_{\eta}>0$ and $h_\eta>0$ such that for all $h<h_\eta$,
	\begin{equation*}
		\norm{\PVh v_h}_{L^2([0,T]\times\dom)}\leq \eta \norm{v_h}_{L^2(0,T;H^1_0(\dom))}+C_{\eta}\norm{v_h}_{L^2(0,T;Z)}.
	\end{equation*}
	By the assumption~\eqref{eq:aprioriAL}, we have that
	\begin{equation*}
		\norm{v_h}_{L^2(0,T;H^1_0(\dom))}\leq C_0,
	\end{equation*}
	for some constant $C_0>0$ independent of $h$. 
	We now let $\epsilon>0$ arbitrary and pick $\eta = \epsilon/(2 C_0)$ (and $h_\eta$ correspondingly). As we have already shown that $\norm{v_h}_{L^2(0,T;Z)}\to 0$, we can choose $0<h_1\leq h_\eta$ such that for all $h<h_1$, we have
	\begin{equation*}
		C_\eta\norm{v_h}_{L^2(0,T;Z)}\leq \frac{\epsilon}{2}.
	\end{equation*}
	Thus for $h<h_1$, we have
	\begin{equation}\label{eq:PVhvconv}
		\norm{\PVh v_h}_{L^2([0,T]\times\dom)}\leq \epsilon.
	\end{equation}
	So we obtain that $\PVh v_h \to 0$ strongly in $L^2([0,T]\times\dom)$ along a subsequence. Next, we write
	\begin{equation*}
		v_h = v_h - \PVh v_h -w_h +\PVh w_h +\PVh v_h,
	\end{equation*}
	using that $w_h = \PVh w_h$. So we can estimate using the triangle inequality,
	\begin{align*}
		\norm{v_h}_{L^2([0,T]\times\dom)}&\leq \norm{v_h-w_h}_{L^2([0,T]\times\dom)} + \norm{\PVh v_h-\PVh w_h}_{L^2([0,T]\times\dom)} + \norm{\PVh v_h}_{L^2([0,T]\times\dom)}\\
		&\leq 2\norm{v_h-w_h}_{L^2([0,T]\times\dom)}   + \norm{\PVh v_h}_{L^2([0,T]\times\dom)},
	\end{align*}
	using that $\PVh$ is an orthogonal projection. The first term on the right hand side goes to zero by assumption~\eqref{eq:samelimit} and the second goes to zero thanks to~\eqref{eq:PVhvconv}. Thus $v_h\to 0$ in $L^2([0,T]\times\dom)$ up to a subsequence and by~\eqref{eq:samelimit} again, also $w_h\to 0$ in $L^2([0,T]\times\dom)$.
	Thus $v_h, \PVh v_h,w_h$ are precompact in $L^2([0,T]\times\dom)$ implying the precompactness of the sequences $\{\hu_h\}_{h>0}$, $\{u_h\}_{h>0}$ and $\{\PVh \hu_h\}_{h>0}$ in $L^2([0,T]\times\dom)$.
\end{proof}

Clearly, the discrete energy inequality, Lemma~\ref{lem:discenergyestimate}, the time-continuity result, Lemma~\ref{lem:timederivativebounds}, Lemma~\ref{lem:samelimits}, and Lemma~\ref{lem:divfree2}, enable us to apply Lemma~\ref{lem:aubinlionsmodified} to obtain that $\{u_h\}_{h>0}$, $\{\widetilde{u}_h\}_{h>0}$, $\{\widehat{u}_h\}_{h>0}$  and $\{\bar{u}_h\}_{h>0}$ defined in~\eqref{eq:defuh}--\eqref{eq:defph} converge up to a subsequence strongly in $L^2([0,T]\times\dom)$ to a limit $u\in L^\infty(0,T;L^2_{\Div}(\dom))\cap L^2(0,T;H^1_0(\dom))$.  

Using these preliminary estimates, we can now prove:
\begin{theorem}\label{thm:convergence}
	The sequences $\{u_h,\bar{u}_h,\hu_h,\widetilde{u}_h \}_{h>0}$ converge up to a subsequence to a Leray-Hopf solution $u$ of~\eqref{eq:NS} as in Definition~\ref{def:weaksol} under the condition that {$h^k = o(\sqrt{\Delta t})$} as $h,\Delta t\to 0$, where $k$ is the polynomial degree of the finite element space $\Uh$.
\end{theorem}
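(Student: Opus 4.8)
My plan is to pass to the limit $h,\Delta t\to 0$ in the variational form of the scheme tested against smooth divergence-free functions, and then to recover the energy inequality, the initial condition, and the regularity of $\partial_t u$ as separate verifications. Since the strong $L^2([0,T]\times\dom)$ convergence of $u_h,\bar{u}_h,\hu_h,\tu_h$ to a common limit $u\in L^\infty(0,T;L^2_{\Div}(\dom))\cap L^2(0,T;H^1_0(\dom))$, together with $\Div u=0$, has already been established (Lemmas~\ref{lem:samelimits}, \ref{lem:divfree2} and~\ref{lem:aubinlionsmodified}), the only remaining task is to identify $u$ as a Leray--Hopf solution in the sense of Definition~\ref{def:weaksol}. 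First I fix a test function $v\in C_c^\infty((0,T);C_{c,\Div}^\infty(\dom))$ and insert $\PUh v$ into the prediction step~\eqref{eq:step1fully}, using the projection step~\eqref{eq:projection1} to absorb the pressure, exactly as in the proof of Lemma~\ref{lem:timederivativebounds}. This produces the discrete identity
\[
\int_0^T(\partial_t\hu_h,\PUh v)\,dt = -\int_0^T b(\bar{u}_h,\tu_h,\PUh v)\,dt-\mu\int_0^T(\Grad\tu_h,\Grad\PUh v)\,dt+\int_0^T(p_h,\Div\PUh v)\,dt+\int_0^T(f_h,\PUh v)\,dt,
\]
up to the harmless $\Delta t$ time-shifts that appear there and disappear as $\Delta t\to 0$. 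I then integrate the left-hand side by parts in time (boundary terms vanish by compact support of $v$) to obtain $-\int_0^T(\hu_h,\PUh\partial_t v)\,dt$ and pass to the limit term by term.

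The individual limits are the routine part. The time-derivative term tends to $-\int_0^T(u,\partial_t v)\,dt$ since $\hu_h\to u$ strongly and $\PUh\partial_t v\to\partial_t v$ in $L^2$; the viscous term tends to $\mu\int_0^T(\Grad u,\Grad v)\,dt$ since $\Grad\tu_h\weak\Grad u$ in $L^2$ against the strongly convergent $\Grad\PUh v$; and the forcing term tends to $\int_0^T(f,v)\,dt$ because $f_h\to f$ in $L^2$. For the convection term I use skew-symmetry, writing $b(\bar{u}_h,\tu_h,\PUh v)=-b(\bar{u}_h,\PUh v,\tu_h)$ to move the derivative onto the smooth factor. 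The transport part $\int(\bar{u}_h\cdot\Grad)\PUh v\cdot\tu_h$ converges to $\int(u\cdot\Grad)v\cdot u$ by the strong $L^2$ convergence of $\bar{u}_h$ and $\tu_h$ combined with the uniform $L^2(0,T;H^1_0)$ bound and the approximation property~\eqref{eq:H1approx}, while the symmetrizing part $\tfrac12\int\Div\bar{u}_h\,(\PUh v\cdot\tu_h)$ vanishes because $\Div\bar{u}_h\weak\Div u=0$ in $L^2$ against a strongly convergent factor. Hence the convection term converges to $\int_0^T b(u,v,u)\,dt=\int_0^T\int_\dom((u\cdot\Grad)v)\cdot u\,dx\,dt$, using $\Div u=0$, and collecting the terms reproduces exactly~\eqref{eq:weakformu}.

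The main obstacle, and the reason for the sharpened hypothesis $h^k=o(\sqrt{\Delta t})$, is the pressure contribution. Because $\Div v=0$, I rewrite $\int_0^T(p_h,\Div\PUh v)\,dt=\int_0^T(p_h,\Div(\PUh v-v))\,dt$ and bound it by $\norm{p_h}_{L^2([0,T]\times\dom)}\norm{\Div(\PUh v-v)}_{L^2([0,T]\times\dom)}$. The LBB-based pressure estimate from Lemma~\ref{lem:timederivativebounds} gives $\norm{p_h}_{L^2}\leq C\Delta t^{-1/2}(E_h^0+\norm{f}^2_{L^2})^{1/2}$, whereas~\eqref{eq:H1approx} with $s=k+1$ gives $\norm{\Div(\PUh v-v)}_{L^2}\leq C h^{k}\norm{v}_{H^{k+1}}$; the product is therefore $O(h^k/\sqrt{\Delta t})$, which tends to zero precisely when $h^k=o(\sqrt{\Delta t})$. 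This is the delicate coupling between mesh size and time step: since no bound better than $O(\Delta t^{-1/2})$ is available for the pressure, the divergence of $\PUh v$ must decay faster than $\sqrt{\Delta t}$ for this term to vanish.

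It remains to verify the energy inequality, the initial condition, and the regularity of $\partial_t u$. For the energy inequality I discard the nonnegative splitting terms $\norm{\tu_h^{m+1}-u_h^m}_{L^2}^2$ and $\norm{u_h^{m+1}-\tu_h^{m+1}}_{L^2}^2$ in~\eqref{eq:discenergybalance}, rewrite the remainder through the interpolants, and pass to the limit using weak lower semicontinuity of $\norm{u_h(t)}_{L^2}^2$ and of $\int_0^t\norm{\Grad\tu_h}_{L^2}^2$ together with the convergence $E_h^0+\Delta t\sum(f^m,\tu_h^{m+1})\to\tfrac12\norm{u_0}_{L^2}^2+\int_0^t(f,u)\,ds$, which yields~\eqref{eq:energyineq} for a.e.\ $t$. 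The regularity $\partial_t u\in L^{4/3}(0,T;(H^1_{\Div}(\dom))^*)$ follows from~\eqref{eq:weakformu}: the transport term is estimated in $L^{4/3}(0,T;(H^1_{\Div})^*)$ by the Ladyzhenskaya/Sobolev inequalities applied to $u\in L^\infty(L^2)\cap L^2(H^1)$, while the viscous and forcing terms lie in $L^2(0,T;(H^1_{\Div})^*)$. Finally, $u_h^0=\PVh\PUh u_0\to u_0$ in $L^2(\dom)$ (since $\PUh u_0\to u_0$ by~\eqref{eq:L2approx} and $\PVh$ is a contraction fixing $L^2_{\Div}(\dom)$) identifies the initial data, and the energy inequality together with the weak-in-time continuity furnished by the $\partial_t u$ bound gives $\limsup_{t\to0}\norm{u(t)}_{L^2}\leq\norm{u_0}_{L^2}$, which combined with weak convergence upgrades to $\norm{u(t)-u_0}_{L^2}\to 0$, completing the proof that $u$ is a Leray--Hopf solution.
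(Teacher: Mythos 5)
Your proposal is correct and follows essentially the same route as the paper's proof: testing the scheme with $\PUh v$ for smooth divergence-free $v$, passing to the limit term by term via the already-established strong $L^2$ and weak $H^1$ convergences, bounding the pressure term by $\norm{p_h}_{L^2([0,T]\times\dom)}\norm{\Div(\PUh v - v)}_{L^2([0,T]\times\dom)} = O(h^k/\sqrt{\Delta t})$ exactly as in the paper's term \emph{(viii)}, and then verifying the energy inequality, $\partial_t u\in L^{4/3}(0,T;(H^1_{\Div}(\dom))^*)$, and continuity at $t=0$ by the same lower-semicontinuity, density, and weak-time-continuity arguments. The only deviations are cosmetic — you work with $\partial_t \hu_h$ rather than the paper's $\partial_t u_h$, and you move the derivative onto the test function in the convection term via skew-symmetry, whereas the paper keeps $\Grad\tu_h$ and pairs it weakly against the strongly convergent $\bar{u}_h\,v$ — and neither changes the substance of the argument.
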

\begin{proof}
	Using the definitions of the interpolations~\eqref{eq:defuh}--\eqref{eq:defph}, we can rewrite the numerical scheme~\eqref{eq:step1fully}--\eqref{eq:projectionfullydiscrete} as
		\begin{equation}
			\label{eq:udiscinterp}
			\left(\partial_t u_h,v\right)+ b(\bar{u}_h,\tu_h,v)+\mu(\Grad \tu_h,\Grad v) = (p_h,\Div v)+(f_h,v),
		\end{equation}
			and 
	\begin{equation}
		\label{eq:divconstraintinterp}
		(\Div u_h,q)=0,
	\end{equation}
	with the test functions in the same space as in~\eqref{eq:step1fully}--\eqref{eq:projectionfullydiscrete}.
	By the previous considerations, weak convergences~\eqref{eq:uhweakconv}--\eqref{eq:Hhweakconv}, Lemma~\ref{lem:discenergyestimate}, Lemma~\ref{lem:timederivativebounds}, Lemma~\ref{lem:samelimits}, and Lemma~\ref{lem:aubinlionsmodified}, we can improve~\eqref{eq:uhweakconv}--\eqref{eq:Hhweakconv} to
	\begin{align}
		u_h,\bar{u}_h,\hu_h,\widetilde{u}_h\to u,&\quad \text{in }\, L^2(0,T;L^2(\dom)),\label{eq:uhstrongconv}\\
		\bar{u}_h,\hu_h,\widetilde{u}_h\weak u,&\quad \text{in }\, L^2(0,T;H^1_0(\dom)).\label{eq:tuhweakconvagain}
	\end{align}
	From Lemma~\ref{lem:divfree2}, it follows that the limit $u$ is divergence free almost everywhere.

	Now we take a test function $v \in C^\infty_c((0,T)\times \dom)$ that is divergence free, and $\Delta t$ small enough such that $v(t,\cdot)=0$ on $[0,2\Delta t]$ and $[T-2\Delta t, T]$.
	We integrate in time and rewrite~\eqref{eq:udiscinterp} as 
		\begin{align}
			\label{eq:uapp}
			&\int_0^T\Big[\underbrace{\left(\partial_t u_h,v\right)}_{(i)}+ \underbrace{b(\bar{u}_h,\tu_h,v)}_{(ii)}+\underbrace{\mu(\Grad \tu_h,\Grad v)}_{(iii)} -\underbrace{(f_h,v)}_{(iv)}\Big] dt\\
			&=\int_0^T\Big[\underbrace{\left(\partial_t u_h,v-\PUh v\right)}_{(v)}+ \underbrace{b(\bar{u}_h,\tu_h,v-\PUh v)}_{(vi)}+\underbrace{\mu(\Grad \tu_h,\Grad (v-\PUh v))}_{(vii)}\Big] dt\notag\\
			&\quad +\int_0^T\Big[\underbrace{(p_h,\Div \PUh v)}_{(viii)}-\underbrace{(f_h,v-\PUh v)}_{(ix)}\Big] dt,\notag
		\end{align}
	where $\PUh$ is the $L^2$-orthogonal projection onto $\Uh$ as before. We consider the limits $h,\Delta t\to 0$ in each of these terms. For the first term, we integrate by parts to obtain
	\begin{equation*}
		\int_0^T(i) dt = -\int_0^T(u_h,\partial_t v) dt \stackrel{h\to 0}{\longrightarrow}-\int_0^T(u ,\partial_t v) dt ,
	\end{equation*}
	using that $u_h\weak u$ in $L^2([0,T]\times\dom)$ . For the second term, we use that $\bar{u}_h,\tu_h\to u$ in $L^2([0,T]\times\dom)$ and $\bar{u}_h,\tu_h\weak u$ in $L^2([0,T];H^1_0(\dom))$, and that $u$ is almost everywhere divergence free,
	\begin{align*}
		\int_0^T (ii) dt& = \int_0^T \left((\bar{u}_h\cdot\Grad)\tu_h\cdot v +\frac12 \Div\bar{u}_h \tu_h\cdot v\right) dt\\
&	\stackrel{h\to 0}{\longrightarrow} \int_0^T\int_{\dom}\left((u\cdot\Grad) u \cdot v +\frac12\Div u u \cdot v \right) dx dt =  \int_0^T   (u\cdot\Grad) u \cdot v  dt.
	\end{align*}
	For the third term, we use that $\Grad\tu_h\weak \Grad u$ in $L^2([0,T]\times\dom)$ and hence
	\begin{equation*}
		\int_0^T (iii) dt \stackrel{h\to 0}{\longrightarrow}\mu \int_0^T (\Grad u,\Grad v) dt.
	\end{equation*}
	We consider the fourth term: We have
		\begin{equation*}
			\begin{split}
					\int_0^T (iv) dt &= \sum_{m=0}^{N-1}\int_{t^m}^{t^{m+1}} (\PUh f^m,v) dt\\
					& = \sum_{m=0}^{N-1}\int_{t^m}^{t^{m+1}} (  f^m,\PUh v) dt\\
					& = \sum_{m=0}^{N-1}\int_{t^m}^{t^{m+1}} \frac{1}{\Delta t}\int_{t^m}^{t^{m+1}}(  f(s),\PUh v-v)ds dt + \sum_{m=0}^{N-1}\int_{t^m}^{t^{m+1}} \frac{1}{\Delta t}\int_{t^m}^{t^{m+1}}(  f(s), v) ds dt\\
					& \stackrel{h,\Delta t\to 0}{\longrightarrow} \int_0^T (f,v) dt,
			\end{split}
	\end{equation*}
	since time averages converge in $L^2$ and $\norm{v-\PUh v}_{L^2(\dom)}\to 0$ as $h\to 0$. Hence the left hand side of~\eqref{eq:uapp} converges to
	\begin{equation*}
		\int_0^T\left[-( u ,\partial_t v)+ b(u,u,v)+\mu(\Grad u,\Grad v) -(f,v)\right] dt .
	\end{equation*}
	In order to conclude that $u$ satisfies the weak formulation of~\eqref{eq:NS}, we thus need to show that the right hand side converges to zero.
	 So, we consider the terms on the right hand side: For term (v), since $\partial_t u_h\in \Uh$, we have $(\partial_t u_h,v)=(\partial_t u_h,\PUh v)$, hence this term vanishes.
	Next,
	\begin{equation*}
		\begin{split}
			\left|\int_0^T (vi)dt \right|&=\left|\int_0^Tb(\bar{u}_h,\tu_h,v-\PUh v) dt\right|\\
			&\leq \left(\norm{\bar{u}_h}_{L^\infty(0,T;L^2)}\norm{\Grad\tu_h}_{L^2([0,T]\times\dom)} +\frac12\norm{\tu_h}_{L^\infty(0,T;L^2)}\norm{\Div \bar{u}_h}_{L^2([\Delta t,T]\times\dom)}\right)\norm{v-\PUh v}_{L^2([0,T]\times\dom)}\\
			&\leq C h \norm{v }_{L^2(0,T;H^1(\dom))},
		\end{split}
	\end{equation*}
	where we used the approximation property of the $L^2$-projection,~\eqref{eq:L2approx}, and the energy estimate, Lemma~\ref{lem:discenergyestimate}. Thus also this term vanishes as $h,\Delta t \to 0$.
	For term (vii), we have
	\begin{equation*}
		\begin{split}
				\left|\int_0^T (vii)dt \right|&=	\left|\mu\int_0^T(\Grad\tu_h,\Grad(v-\PUh v))dt\right|\\
				&\leq \mu \norm{\Grad\tu_h}_{L^2([0,T]\times\dom)}\norm{\Grad(v-\PUh v)}_{L^2([0,T]\times\dom)}\\
				&\leq C h\norm{v}_{L^2(0,T;H^2(\dom))},
		\end{split}
	\end{equation*}
	again using the approximation property of the $L^2$-projection,~\eqref{eq:H1approx}, and the discrete energy estimate. 
	For the term (viii), we have, using that $v$ is divergence free,
	\begin{equation}\label{eq:pressuretermestimate}
		\begin{split}
		\left|\int_0^T (viii) dt\right|& =	\left|\int_0^T(p_h,\Div(\PUh v-v)) dt\right|\\
		&\leq  \norm{p_h}_{L^2([0,T]\times \dom)}\norm{\Div(\PUh v-v)}_{L^2([0,T]\times\dom)}\\
			& \leq \frac{1}{\sqrt{\Delta t}}\left(\sum_{m=0}^{N-1}\norm{\tu^{m+1}_h-u^{m+1}_h}_{L^2}^2\right)^{\frac12}\left(\int_0^T|v-\PUh v|_{H^1}^2 dt\right)^{\frac12} \\
			&\leq \frac{C (E_h^0+\norm{f}_{L^2([0,T]\times\dom)}^2)^{1/2} h^k}{\sqrt{\Delta t}}\norm{v}_{L^2(0,T;H^{k+1}(\dom))}, 
		\end{split}
	\end{equation}
	similar to the estimate~\eqref{eq:pressuretimecont}, and where we have used~\eqref{eq:H1approx}.  Clearly, also this term goes to zero if $h^k=o(\sqrt{\Delta t})$. 
	Finally, for the ninth term, we have, since $f_h=\PUh f^m$,
	\begin{equation*}
		(ix) = (f_h,v-\PUh v) = 0,
	\end{equation*}
	i.e., this term vanishes.
	Thus we obtain in the limit
	\begin{equation}
		\label{eq:uprelimlimit}
		\int_0^T\left[-( u ,\partial_t v)+ b(u,u,v)+\mu(\Grad u,\Grad v) -(f,v)\right] dt = 0,
	\end{equation}
	for any divergence free $v\in C^\infty_c((0,T)\times\dom))$. Passing to the limit in the discrete energy balance~\eqref{eq:discenergybalance} and using the properties of weak convergence, we obtain that the limit $u$ satisfies for almost any $t\in [0,T]$,
	\begin{equation}\label{eq:energylimit}
	\frac{1}{2}\norm{u(t)}_{L^2(\dom)}^2 +\mu\int_0^t\norm{\Grad u(s)}_{L^2(\dom)}^2 ds \leq \frac12\norm{u_0}_{L^2(\dom)}^2 + \int_0^t(f(s),u(s)) ds,
	\end{equation}
	which is the energy inequality~\eqref{eq:energyineq}. This implies the spatial regularity required in~\eqref{eq:regularity}. To see that $u$ satisfies the time regularity in~\eqref{eq:regularity}, we note that due to the spatial regularity of $u$, we can use the density of smooth functions and extend the weak formulation~\eqref{eq:uprelimlimit} to functions $v\in L^4(0,T;H^1_{\Div}(\dom))$. Finally, we establish the continuity at zero. We write
	\begin{equation*}
		\norm{u(t)-u_0}_{L^2(\dom)}^2 = \norm{u(t)}_{L^2(\dom)}^2 + \norm{u_0}_{L^2(\dom)}^2 - 2 (u(t),u_0).
	\end{equation*}
First, we note that $\partial_t u\in L^{4/3}(0,T;(H^1_{\Div}(\dom)^*))$ and $u\in L^\infty(0,T;L^2_{\Div}(\dom))$ imply that $u$ is weakly continuous in time with values in $L^2_{\Div}(\dom)$, i.e., the mapping
\begin{equation*}
	t\mapsto (u(t),v),
\end{equation*}
is continuous for any $v\in L^2_{\Div}(\dom)$ (see for example Lemma II.5.9 in~\cite{Boyer2013} or~\cite{Temam1977}). This implies that
\begin{equation*}
	\lim_{t\to 0} (u(t),u_0) = \norm{u_0}_{L^2(\dom)}^2.
\end{equation*}
Furthermore, from Fatou's lemma, we obtain that
\begin{equation*}
	\norm{u_0}_{L^2(\dom)}^2 \leq \lim_{t\to 0}\norm{u(t)}_{L^2(\dom)}^2. 
\end{equation*}
On the other hand, sending $t\to 0$ in the energy inequality~\eqref{eq:energylimit}, we obtain that
\begin{equation*}
	\lim_{t\to 0}\frac12\norm{u(t)}_{L^2(\dom)}^2 \leq \frac12\norm{u_0}_{L^2(\dom)}^2.
\end{equation*}
Thus
\begin{equation*}
	\lim_{t\to 0} \norm{u(t)}_{L^2(\dom)}^2 = \norm{u_0}_{L^2(\dom)}^2 
\end{equation*}
and we conclude that
	\begin{equation*}
	\lim_{t\to 0}\norm{u(t)-u_0}_{L^2(\dom)}^2 = 0.
\end{equation*}
	Therefore,   $u$ is a Leray-Hopf weak solution of~\eqref{eq:NS}. 
\end{proof}
\section{Convergence of projection method with projection step as a Poisson problem}\label{sec:poisson}
While the discretization of the projection step in~\eqref{eq:projection} could be considered as a Darcy problem, one can also consider a discretization as a Poisson problem. In this case, one can obtain stability and convergence of a fully discrete scheme for the incremental version of the projection method. This was already done in~\cite{Eymard2024} by the use of a special interpolation operator which was constructed explicitly for the lowest degree Taylor-Hood finite elements. No inf-sup condition such as~\eqref{eq:LBB} needed to be assumed for this to work. We will show here that the proof can be modified to prove that the linearly implicit discretization of the incremental projection method below converges without the need for this interpolation operator and therefore independently of the degree of the finite elements. This requires an adaption of the compactness arguments in~\cite{Eymard2024} but also shortens the proof significantly. We will outline the compactness arguments here and sketch the rest of the proof as it can be achieved in a similar way to the proof of convergence of Chorin's version of the projection method in the previous sections or alternatively the proof in~\cite{Eymard2024}.
We consider the following discretization of~\eqref{eq:NS}: We use the same notation and trilinear form $b$ as in Section~\ref{sec:spatial} with the following modifications. We will seek the intermediate velocity $\tu_h^{m+1}$ in a space $\Uh\subset H^1_0(\dom)$, the pressure $p_h^{m+1}$ in a space $\Ph\subset H^1(\dom)\cap L^2_0(\dom)$ and the final velocity $u_h^{m+1}$ in the space $\Yh = \Uh + \Grad \Ph$. Note that now $\Ph\subset H^1(\dom)$ and therefore $\Grad \Ph$ is well-defined (though possibly containing discontinuous functions). Also note that the final velocity $u_h^{m+1}$ may not satisfy homogeneous Dirichlet boundary conditions in this case.  We again require both the $L^2$-orthogonal projections $\PUh:L^2(\dom)\to \Uh$ and $\PPh:L^2(\dom)\to \Ph$ defined by~\eqref{eq:L2projection} to satisfy the properties~\eqref{eq:L2projproperties} for possibly different $k$. 

We start by approximating the initial data. Given $u_0\in L^2_{\Div}(\dom)$, we let $\tu_h^{0} = \PUh u_0$,  and define $u_h^0$ and $p^0_h$ by projecting it onto $\Vh\cap \Yh\times\Ph$, 
using the following projection step:
\begin{align*}
	\left(\frac{u^{0}_h-\tu^{0}_h}{\Delta t }, v\right)&= -(\Grad p^{0}_h, v),\\
	\left(u^{0}_h,\Grad q\right)&=0,	
\end{align*}
for all $(v,q)\in \Yh\times \Ph$.
Then we compute updates using the following scheme:
\begin{enumerate}
	\item[{\bf Step 1}] (Prediction step):
	For any $m\geq 0$, given $\tu_h^m\in \Uh$, $ u_h^{m} \in \Yh$, $p_h^m\in \Ph$, find $\tu_h^{m+1} \in \Uh $, such that for all $v\in \Uh $,
		\begin{equation}
			\label{eq:step1fully2}
			\left(\frac{\tu^{m+1}_h-u^m_h}{\Delta t},v\right)+ b(\tu^m_h,\tu^{m+1}_h,v)+\mu(\Grad \tu_h^{m+1},\Grad v)-(p_h^m,\Div v)= (\PUh f^{m}, v),
		\end{equation}
		\item[{\bf Step 2}] (Projection step): Next we seek $(u^{m+1}_h,p^{m+1}_h)\in \Yh\times\Ph$ which satisfy for all $(v,q)\in \Yh\times\Ph$,
		\begin{subequations}
			\label{eq:projectionfullydiscrete2}
			\begin{align}
				\label{eq:projection12}
				\left(\frac{u^{m+1}_h-\tu^{m+1}_h}{\Delta t }, v\right)&= -(\Grad( p^{m+1}_h-p_h^m), v),\\
				\label{eq:projection22}
				\left( u^{m+1}_h,\Grad q\right)&=0.
			\end{align}	
		\end{subequations}
	\end{enumerate}
\begin{remark}\label{rem:PVhonUh}
	We note that restricted to $\Yh$, $\PVh$ corresponds exactly to the projection onto $\mathbb{H}_h= \ker(C_h)$, where the operator $C_h:\Yh\to \Ph$ is defined as in~\cite{Guermond1998}, by $(C_h u,q)=(u,\Grad q)$ for all $q\in \Ph$, i.e., one writes~\eqref{eq:projection12}--\eqref{eq:projection22} as
	\begin{equation*}
		\begin{split}
			\frac{u^{m+1}_h-\tu^{m+1}_h}{\Delta t } & = -C_h^\top (p^{m+1}_h-p_h^m),\\
			C_h u^{m+1}_h & = 0.
		\end{split}
	\end{equation*}
		 Thus $\PVh$ is exactly the projection step~\eqref{eq:projectionfullydiscrete2} of the numerical scheme, when restricted to $\Yh$. In particular, we have $\PVh \tu_h^{m+1} = u^{m+1}_h$. 
\end{remark}
\begin{remark}[Formulation of projection step as a Poisson problem]
	The projection step can be rewritten as a Poisson problem as follows: Taking $\Grad q\in \Grad\Ph$ as a test function in~\eqref{eq:projection12} and using the weak divergence constraint~\eqref{eq:projection22}, we see that $p_h^{m+1}\in \Ph$ can be computed from
	\begin{equation*}
		(\Grad p^{m+1}_h,\Grad q) = (\Grad p^m_h,\Grad q)-\left(\frac{\Div \tu^{m+1}_h}{\Delta t},q\right),
	\end{equation*}
	and then $u_h^{m+1}$ can be computed via~\eqref{eq:projection12}.
	
\end{remark}
The solvability of the scheme~\eqref{eq:step1fully2}--\eqref{eq:projectionfullydiscrete2} is shown in the same way as for the nonincremental Darcy version of the scheme above in Lemma~\ref{lem:solvability}, alternatively see also~\cite[Lemma 2.1]{Eymard2024}. Furthermore, the scheme satisfies the discrete energy estimate
\begin{lemma}[Discrete energy estimate~\cite{Eymard2024}]
	\label{lem:discenergy2}
	The approximations computed by the scheme~\eqref{eq:step1fully2}--\eqref{eq:projectionfullydiscrete2} satisfy for any integer $M\in \left\{0,1,\dots, \frac{T}{\Delta t}\right\}$
	\begin{equation}
		\label{eq:energybalance2}
		E^M_h + \frac12 \sum_{m=0}^{M-1} \norm{\tu_h^{m+1} - u_h^m}_{L^2(\dom)}^2 +\mu\Delta t \sum_{m=0}^{M-1} \norm{\Grad \tu_h^{m+1}}_{L^2(\dom)}^2  = E_h^0 + \Delta t \sum_{m=0}^{M-1} (f^m,\tu_h^{m+1}),
	\end{equation}
	where 
	\begin{equation*}
		E_h^M := \frac12 \norm{u_h^M}_{L^2(\dom)}^2 + \frac{\Delta t^2}{2}\norm{\Grad p_h^{M}}_{L^2(\dom)}^2.
	\end{equation*}
	
\end{lemma}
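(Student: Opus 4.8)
The plan is to reproduce the structure of the proof of Lemma~\ref{lem:discenergyestimate}, testing each substep of the scheme against the natural velocity and applying the polarization identity $2a(a-b)=a^2-b^2+(a-b)^2$, but with extra bookkeeping for the pressure-increment term that is characteristic of the incremental formulation. First I would test the prediction step~\eqref{eq:step1fully2} with $v=\tu_h^{m+1}\in\Uh$. The skew-symmetry of $b$ kills the convective contribution, $b(\tu_h^m,\tu_h^{m+1},\tu_h^{m+1})=0$, and the polarization identity rewrites the discrete time derivative as $\frac{1}{2\Delta t}\big(\norm{\tu_h^{m+1}}_{L^2}^2-\norm{u_h^m}_{L^2}^2+\norm{\tu_h^{m+1}-u_h^m}_{L^2}^2\big)$. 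Since $\tu_h^{m+1}\in\Uh$, the forcing reduces to $(f^m,\tu_h^{m+1})$ by the definition of $\PUh$. The only genuinely new ingredient is the pressure term $(p_h^m,\Div\tu_h^{m+1})$: because $\tu_h^{m+1}\in\Uh\subset H^1_0(\dom)$ and $p_h^m\in\Ph\subset H^1(\dom)$, integration by parts carries no boundary contribution and gives $(p_h^m,\Div\tu_h^{m+1})=-(\Grad p_h^m,\tu_h^{m+1})$.

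Next I would re-express this pressure term through the projection step. Testing~\eqref{eq:projection12} with $v=\Grad p_h^m\in\Grad\Ph\subset\Yh$ and using $(u_h^{m+1},\Grad p_h^m)=0$, which follows from~\eqref{eq:projection22} with $q=p_h^m$, yields $(\tu_h^{m+1},\Grad p_h^m)=\Delta t\,(\Grad(p_h^{m+1}-p_h^m),\Grad p_h^m)$. Applying the polarization identity once more to the right-hand side gives
\[
(p_h^m,\Div\tu_h^{m+1})=-\frac{\Delta t}{2}\left(\norm{\Grad p_h^{m+1}}_{L^2}^2-\norm{\Grad p_h^m}_{L^2}^2-\norm{\Grad(p_h^{m+1}-p_h^m)}_{L^2}^2\right),
\]
which is exactly the source of the telescoping $\frac{\Delta t^2}{2}\norm{\Grad p_h^M}_{L^2}^2$ contribution to $E_h^M$ (together with a spurious increment term that I will need to cancel).

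Then I would test the projection step~\eqref{eq:projection12} with $v=u_h^{m+1}\in\Yh$. The divergence constraint~\eqref{eq:projection22} forces the right-hand side $-(\Grad(p_h^{m+1}-p_h^m),u_h^{m+1})$ to vanish, so the polarization identity produces $\frac{1}{2\Delta t}\big(\norm{u_h^{m+1}}_{L^2}^2-\norm{\tu_h^{m+1}}_{L^2}^2+\norm{u_h^{m+1}-\tu_h^{m+1}}_{L^2}^2\big)=0$. The step I expect to be the crux of the argument is the exact cancellation of the two pressure-increment contributions: since~\eqref{eq:projection12}--\eqref{eq:projection22} hold for all $v\in\Yh$ and since $u_h^{m+1}$, $\tu_h^{m+1}$, and $\Grad(p_h^{m+1}-p_h^m)$ all lie in $\Yh$, the projection relation holds as a genuine $L^2$ identity $u_h^{m+1}-\tu_h^{m+1}=-\Delta t\,\Grad(p_h^{m+1}-p_h^m)$. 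Consequently $\frac{1}{2\Delta t}\norm{u_h^{m+1}-\tu_h^{m+1}}_{L^2}^2=\frac{\Delta t}{2}\norm{\Grad(p_h^{m+1}-p_h^m)}_{L^2}^2$, which precisely cancels the $-\frac{\Delta t}{2}\norm{\Grad(p_h^{m+1}-p_h^m)}_{L^2}^2$ term generated in the previous paragraph. This cancellation is what makes the incremental scheme energy stable and what selects the augmented energy $E_h^M$; getting its sign and the factor $\Delta t^2$ right is the delicate bit.

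Finally I would add the two tested equations, multiply by $\Delta t$, and sum over $m=0,\dots,M-1$. After the cancellation the surviving terms telescope cleanly: the kinetic part $\frac12\norm{u_h^{m+1}}_{L^2}^2-\frac12\norm{u_h^m}_{L^2}^2$ collapses to $\frac12\norm{u_h^M}_{L^2}^2-\frac12\norm{u_h^0}_{L^2}^2$, and the pressure part $\frac{\Delta t^2}{2}\big(\norm{\Grad p_h^{m+1}}_{L^2}^2-\norm{\Grad p_h^m}_{L^2}^2\big)$ collapses to $\frac{\Delta t^2}{2}\big(\norm{\Grad p_h^M}_{L^2}^2-\norm{\Grad p_h^0}_{L^2}^2\big)$, while the dissipation $\mu\Delta t\norm{\Grad\tu_h^{m+1}}_{L^2}^2$, the numerical dissipation $\frac12\norm{\tu_h^{m+1}-u_h^m}_{L^2}^2$, and the forcing $\Delta t\,(f^m,\tu_h^{m+1})$ accumulate into sums. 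Grouping the endpoint terms into $E_h^M$ and $E_h^0$ as defined then gives precisely~\eqref{eq:energybalance2}, completing the proof.
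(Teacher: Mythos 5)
Your proof is correct, and it is essentially the argument the paper delegates to \cite[Lemma 3.1]{Eymard2024}: the same testing strategy as the paper's own Lemma~\ref{lem:discenergyestimate}, augmented by the standard pressure-gradient telescoping for the incremental scheme. All the delicate points check out — in particular, your observation that $u_h^{m+1}-\tu_h^{m+1}+\Delta t\,\Grad(p_h^{m+1}-p_h^m)$ lies in $\Yh=\Uh+\Grad\Ph$ and is $L^2$-orthogonal to all of $\Yh$, hence vanishes identically, is exactly what legitimizes converting $\frac{1}{2\Delta t}\norm{u_h^{m+1}-\tu_h^{m+1}}_{L^2}^2$ into $\frac{\Delta t}{2}\norm{\Grad(p_h^{m+1}-p_h^m)}_{L^2}^2$ and produces the cancellation yielding the augmented energy $E_h^M$.
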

\begin{proof}
	This was proved in~\cite[Lemma 3.1]{Eymard2024}.
\end{proof}
\begin{remark}
	Note that the interpolation operator $\Pi_N$ as defined in~\cite{Eymard2024} was not needed to prove this lemma in~\cite{Eymard2024}, it was only to prove the convergence of the scheme.
\end{remark}
As in Remark~\ref{rem:fneq0}, this results in a uniform in $h,\Delta t >0$ bound on the energy after applying the discrete Gr\"onwall inequality, Lemma~\ref{lem:discretegronwall}. 
Next, we define the piecewise constant interpolations in time of the approximations $u_h^m$, $\tu_h^m$ and $p_h^m$ by
\begin{alignat}{2}
	u_h(t)& = u_h^m,\qquad &t\in [t^m,t^{m+1}),\label{eq:defuh2}\\
	\widetilde{u}_h(t) &= \widetilde{u}_h^{m+1},\qquad &t\in (t^m,t^{m+1}],\\
	p_h(t)& = p_h^{m+1},\qquad & t\in (t^m,t^{m+1}],\\
	f_h(t)& = \frac{1}{\Delta t}\int_{t^m}^{t^{m+1}}\PUh f(s)ds,\qquad &t\in [t^m,t^{m+1}),\label{eq:defph2}
\end{alignat}
for $m=0,1,2,\dots$ with
\begin{equation*}
	\tu_h(r)=u^0_h,\quad \text{and}\quad p_h(r)=p_h^0,\quad \text{for }\, r\leq 0.
\end{equation*}

\subsection{Convergence of the numerical scheme}
For the proof of convergence of this scheme, we enounter similar issues as for the previous scheme that was based on a Darcy formulation of the projection step. On the one hand, we have uniform bounds on $\tu_h$ in $L^2(0,T;H^1_0(\dom))$ on the other hand, $u_h$ is approximately weakly divergence free, but we cannot say the same for $\tu_h$ directly. Furthermore, we have very weak bounds on the pressure approximation.

First, from the energy balance, Lemma~\ref{lem:discenergy2}, we obtain the following uniform a priori estimates for the sequences  $\{{u}_h\}_{h>0}$ and $\{\tu_h\}_{h>0}$:
\begin{align*}
	&\{{u}_h\}_{h>0}\subset L^\infty(0,T;L^2(\dom)),\\
	&\{\widetilde{u}_h\}_{h>0}\subset L^\infty(0,T;L^2(\dom))\cap L^2(0,T;H^1_0(\dom)).
\end{align*}
These uniform estimates imply, using the Banach-Alaoglu theorem that there exist weakly convergent subsequences, which, for the ease of notation, we still denote by $h\to 0$,
\begin{align}
{u}_h\weakstar {u},\quad	\widetilde{u}_h\weakstar \widetilde{u},&\quad \text{in }\, L^\infty(0,T;L^2(\dom)),	\label{eq:uhweakconv2}\\
	\widetilde{u}_h\weak \widetilde{u},&\quad \text{in }\, L^2(0,T;H^1_0(\dom)).\label{eq:Hhweakconv2}
\end{align}
Due to the discrete energy estimate~\eqref{eq:energybalance2}, Lemma~\ref{lem:samelimits} still applies to the sequences $\{{u}_h\}_{h>0}$ and $\{\tu_h\}_{h>0}$ and they have the same weak limit $u\in L^2(0,T;H^1_0(\dom))\cap L^\infty(0,T;L^2(\dom))$. It also follows just as in Lemma~\ref{lem:divfree2} that the limit is weakly divergence free. However, in this case, we cannot conclude that piecewise linear interpolations in time of the approximations $u^m_h$, $\tu_h^m$ converge to the same limit due to the lack of the corresponding bounds in the energy inequality. Therefore, we provide an estimate on the time differences of $\tu_h$ instead of $\partial_t u_h$, similar to~\cite{Eymard2024}. This will be one of the ingredients to apply the Simon compactness result~\cite{Simon1987}, restated in Theorem~\ref{thm:Simonlemma}, in order to derive compactness of the approximating sequences in $L^2([0,T]\times\dom)$.

\begin{lemma}
	\label{lem:timecontinuityutilde}
	Denote $Z = (H^1_{\Div}(\dom)\cap H^s(\dom))^*$. Assume that $h^s\leq C\Delta t$, where $2\leq s\leq k+1$ and $k$ is the polynomial degree of $\Uh$. Then the aproximations $\tu_h$ computed by~\eqref{eq:step1fully2}--\eqref{eq:projectionfullydiscrete2} satisfy for any $0\leq \tau <T$,  
	\begin{equation}
		\label{eq:timecontutildeweak}
		\int_0^{T-\tau}\norm{\tu_h(t+\tau)-\tu_h(t)}_Z^2 dt \leq C \tau (\tau+\Delta t).
	\end{equation}

\end{lemma}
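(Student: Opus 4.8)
The plan is to bound the time-increment $\tu_h(t+\tau)-\tu_h(t)$ in the weak norm $Z=(H^1_{\Div}\cap H^s(\dom))^*$ by summing the scheme updates~\eqref{eq:step1fully2} across the time steps that fall between $t$ and $t+\tau$, then integrating in $t$. First I would observe that since $\tu_h$ is piecewise constant, $\tu_h(t+\tau)-\tu_h(t)$ is a telescoping sum of increments $\tu_h^{j+1}-\tu_h^{j}$ over the roughly $\tau/\Delta t$ indices $j$ separating the two time levels. Testing against $v\in H^1_{\Div}\cap H^s(\dom)$ and splitting $v=(v-\PUh v)+\PUh v$, each single increment $(\tu_h^{j+1}-\tu_h^j,\PUh v)$ can be expanded: rewriting $\tu_h^{j+1}-\tu_h^j = (\tu_h^{j+1}-u_h^j)+(u_h^j-\tu_h^j)$, the first difference is controlled via~\eqref{eq:step1fully2} (it equals $\Delta t$ times the convection, diffusion, pressure, and forcing terms), and the second is a projection increment controlled by~\eqref{eq:projection12} at step $j$. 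The key point is that because $v$ is divergence-free, the pressure term $(p_h^m,\Div v)$ and the terms involving $\PUh v - v$ get treated exactly as in Lemma~\ref{lem:timederivativebounds}: the divergence of $v$ vanishes while $\Div(\PUh v - v)$ is small by~\eqref{eq:H1approx}, which is precisely where the hypothesis $h^s\leq C\Delta t$ enters to absorb the $h^{s-1}/\sqrt{\Delta t}$-type loss in the pressure estimate.

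Concretely, I would estimate the summed increment over one window as
\begin{equation*}
	\left|(\tu_h(t+\tau)-\tu_h(t),v)\right| \leq C\left(\sum_{j} \Delta t\,\|\Grad \tu_h^{j+1}\|_{L^2} + \text{lower-order terms}\right)\|v\|_{H^1_{\Div}\cap H^s},
\end{equation*}
where the sum runs over the indices in the window of width $\tau$. The convection term $b(\tu_h^j,\tu_h^{j+1},\PUh v)$ is handled with Morrey's/Sobolev embedding exactly as in term II of Lemma~\ref{lem:timederivativebounds}, using the uniform $L^2$-bound on $\tu_h^j$ from Remark~\ref{rem:utildeL2bound} and the $L^2(0,T;H^1_0)$-bound on $\Grad\tu_h$ from the energy estimate~\eqref{eq:energybalance2}. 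After dividing by $\|v\|_{Z^*}=\|v\|_{H^1_{\Div}\cap H^s}$, Cauchy-Schwarz in the window index $j$ produces a factor $(\tau/\Delta t)^{1/2}$ from counting the $\sim\tau/\Delta t$ terms, and a factor $(\Delta t\sum_j\|\Grad\tu_h^{j+1}\|_{L^2}^2)^{1/2}$ bounded uniformly by the energy estimate; multiplying these yields $\|\tu_h(t+\tau)-\tu_h(t)\|_Z^2 \leq C\tau$ pointwise-in-$t$ for the diffusion/convection contribution, while the projection-increment and forcing contributions yield the additional $\tau\,\Delta t$ term. Integrating in $t$ over $[0,T-\tau]$ then gives the claimed bound $C\tau(\tau+\Delta t)$.

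The main obstacle I anticipate is bookkeeping the window boundaries correctly: for general $\tau$ the endpoints $t$ and $t+\tau$ need not align with the grid $\{t^m\}$, so the telescoping sum has partial contributions at the two ends, and one must verify these are absorbed into the stated $\tau(\tau+\Delta t)$ rate rather than producing an uncontrolled $O(\Delta t)$ remainder independent of $\tau$. The cleanest route is to bound the count of full steps in $[t,t+\tau]$ by $\lceil \tau/\Delta t\rceil \leq \tau/\Delta t + 1$ and carry the resulting two terms separately, so that the $+1$ generates the $\tau\,\Delta t$ piece after the Cauchy-Schwarz counting. A secondary subtlety is ensuring the pressure estimate is the one that forces $h^s\leq C\Delta t$ rather than a weaker condition: since $p_h^m$ is bounded only through $\|\Grad p_h^m\|_{L^2}$ in the incremental energy $E_h^m$, I would route the pressure contribution through $(p_h^m,\Div(\PUh v - v))$, bound $\Div(\PUh v - v)$ by $h^{s-1}\|v\|_{H^s}$, and use $\Delta t^2\sum_m\|\Grad p_h^m\|_{L^2}^2 \leq C$ from~\eqref{eq:energybalance2} together with $h^s\leq C\Delta t$ to close the estimate uniformly.
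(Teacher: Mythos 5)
Your overall architecture is the same as the paper's: telescope $\tu_h(t+\tau)-\tu_h(t)=\sum_{m=m_1}^{m_2}(\tu_h^{m+1}-\tu_h^m)$, replace the test function by $\PUh v$ (the increment lies in $\Uh$), split each increment as $(\tu_h^{m+1}-u_h^m)+(u_h^m-\tu_h^m)$ via \eqref{eq:step1fully2} and \eqref{eq:projection12}, then count $\Delta t(m_2-m_1+1)\leq \tau+\Delta t$, apply Cauchy--Schwarz, and integrate in $t$. The genuine gap is in your pressure estimate. You route the pressure through $(p_h^m,\Div(\PUh v-v))$ and bound $\norm{\Div(\PUh v-v)}_{L^2}\leq Ch^{s-1}\norm{v}_{H^s}$ by \eqref{eq:H1approx}. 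But the only available pressure control is $\norm{\Grad p_h^m}_{L^2}\leq C\Delta t^{-1}$, uniformly in $m$: your invoked bound $\Delta t^2\sum_m\norm{\Grad p_h^m}_{L^2}^2\leq C$ is \emph{not} what \eqref{eq:energybalance2} provides, since the energy contains $\tfrac{\Delta t^2}{2}\norm{\Grad p_h^M}_{L^2}^2$ only at the single time level $M$ (summing would cost a factor $T/\Delta t$). With Poincar\'e this gives $\norm{p_h^m}_{L^2}\leq C\Delta t^{-1}$, so each step of your pressure term is of size $\Delta t^{-1}h^{s-1}\norm{v}_{H^s}$, and under the stated hypothesis $h^s\leq C\Delta t$ one only gets $h^{s-1}/\Delta t=h^s/(h\,\Delta t)\leq C/h$, which blows up as $h\to 0$. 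Your route would close only under the strictly stronger condition $h^{s-1}\leq C\Delta t$. The analogy you draw to the ``$h^{s-1}/\sqrt{\Delta t}$-type loss'' of Lemma~\ref{lem:timederivativebounds} is the wrong template: there, in the Darcy scheme, $p_h$ carries only an $L^2$ bound obtained through the LBB condition; here $\Ph\subset H^1(\dom)$, and that extra regularity must be used.

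The paper's fix is to integrate by parts (legitimate since $p_h^m\in H^1(\dom)$ and $\PUh v-v\in H^1_0(\dom)$): the prediction and projection pressure contributions combine into $-\Delta t\,(2\Grad p_h^m-\Grad p_h^{m-1},\PUh v)$, one uses $(\Grad p,v)=0$ for $v\in H^1_{\Div}(\dom)$, and then estimates $(2\Grad p_h^m-\Grad p_h^{m-1},\PUh v-v)$ by $C\Delta t^{-1}h^s\norm{v}_{H^s}$ via the $L^2$-approximation property \eqref{eq:L2approx} --- gaining a full power of $h$ over your version, so that $h^s\leq C\Delta t$ closes exactly. A secondary, fixable point: bounding the windowed dissipation sum by a constant pointwise, as you propose, yields $\norm{\tu_h(t+\tau)-\tu_h(t)}_Z^2\leq C\tau$ and hence only $CT\tau$ after integration; to obtain the stated $C\tau(\tau+\Delta t)$ you should keep the windowed sum inside the $t$-integral and use Fubini (producing the factor $\tau$) together with $\Delta t(m_2-m_1+1)\leq\tau+\Delta t$, as the paper does. (The weaker $C\tau$ bound would still suffice for the application in Lemma~\ref{lem:timecont}, but it is not the inequality claimed in the lemma.)
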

\begin{proof}
	Given $\tau>0$ and $t\in (0,T-\tau]$, let $m_1\in \N$ such that $\tu_h(t) = u_h^{m_1}$ and $ m_2\in \N$ such that $\tu_h(t+\tau) = \tu_h^{m_2+1}$. If $\tau<\Delta t$, there is nothing to prove, so we can assume without loss of generality that $m_2\geq m_1$. Then
	\begin{equation*}
\tu_h(t+\tau)-\tu_h(t) = \sum_{m=m_1}^{m_2} (\tu_h^{m+1}-\tu_h^m).
	\end{equation*}
	We take a test function $v\in H^1_{\Div}(\dom)\cap H^s(\dom)$,  and plug in the scheme \eqref{eq:step1fully2}--\eqref{eq:projectionfullydiscrete2},
	\begin{align*}
		(\tu_h(t+\tau)-\tu_h(t),v) &= \sum_{m=m_1}^{m_2} (\tu_h^{m+1}-\tu_h^m,v)\\
		& = \sum_{m=m_1}^{m_2} (\tu_h^{m+1}-\tu_h^m,\PUh v)\\
		& = - \Delta t  \sum_{m=m_1}^{m_2} \underbrace{b(\tu_h^m,\tu_h^{m+1},\PUh v)}_{\text{I}} - \Delta t  \sum_{m=m_1}^{m_2} \underbrace{\mu(\Grad \tu_h^{m+1},\Grad \PUh v)}_{\text{II}} \\
		&\quad  - \Delta t  \sum_{m=m_1}^{m_2} \underbrace{(2\Grad p_h^{m}-\Grad p_h^{m-1},\PUh v)}_{\text{III}} + \Delta t  \sum_{m=m_1}^{m_2} \underbrace{(f^m,\PUh v)}_{\text{IV}} .
	\end{align*}
	We estimate each of the terms I -- IV, similar to what we did in Lemma~\ref{lem:timederivativebounds}:
	\begin{align*}
		|\text{I}|&=\left|b(\tu_h^m,\tu_h^{m+1},\PUh v)\right|\\
		& \leq \norm{\tu^m_h}_{L^2} \norm{\Grad\tu_h^{m+1}}_{L^2}\norm{\PUh v}_{L^\infty}+\frac12\norm{\tu^{m+1}_h}_{L^2} \norm{\Div\tu_h^{m}}_{L^2}\norm{\PUh v}_{L^\infty}\\
			& \leq C \left(\norm{\tu^m_h}_{L^2} \norm{\Grad\tu_h^{m+1}}_{L^2}+\norm{\tu^{m+1}_h}_{L^2} \norm{\Grad\tu_h^{m}}_{L^2}\right)\norm{ v}_{L^\infty}\\
				& \leq C \left(\norm{\tu^m_h}_{L^2} \norm{\Grad\tu_h^{m+1}}_{L^2}+\norm{\tu^{m+1}_h}_{L^2} \norm{\Grad\tu_h^{m}}_{L^2}\right)\norm{ v}_{H^2},
	\end{align*}
	using~\eqref{eq:Lr}.
	For the second term, we have
	\begin{align*}
		|\text{II}|
		& \leq\mu \norm{\Grad \tu^{m+1}_h}_{L^2(\dom)}\norm{\Grad\PUh v}_{L^2(\dom)}\\
		& \leq C \norm{\Grad \tu^{m+1}_h}_{L^2(\dom)}\norm{ v}_{H^1(\dom)},
	\end{align*}
	using~\eqref{eq:H1}.
	For the third term, III, we have, using that $v$ is divergence free,  the properties of the $L^2$-projection,~\eqref{eq:L2projproperties}, and that by the energy estimate $\norm{\Grad p}_{L^\infty(0,T;L^2(\dom))}\leq C \Delta t^{-1}$,
 \begin{align*}
		|\text{III}|&\leq C(\norm{\Grad p^m_h}_{L^2(\dom)}+\norm{\Grad p^{m-1}_h}_{L^2(\dom)})\norm{\PUh v-v}_{L^2( \dom)}\\
		& \leq C(\norm{\Grad p^m_h}_{L^2(\dom)}+\norm{\Grad p^{m-1}_h}_{L^2(\dom)})h^s\norm{ v}_{H^s( \dom)}\\
		& \leq C \frac{h^s}{\Delta t}\norm{ v}_{H^s( \dom)}\\
		& \leq  C \norm{ v}_{H^s( \dom)},
	\end{align*}
	under the condition that $h^s\leq C {\Delta t}$.
	Finally, for the last term,  IV, we have
	\begin{equation*}
		|\text{IV}|  \leq \norm{f^m}_{ L^2(\dom)}\norm{v}_{ L^2(\dom)} .
	\end{equation*}
	Combining the estimates I -- IV, we have, after taking the supremum over $v\in H^1_{\Div}(\dom)\cap H^s(\dom)$,
	\begin{align*}
		\norm{\tu_h(t+\tau)-\tu_h(t)}_Z& \leq C \Delta t  \sum_{m=m_1}^{m_2} \Big( \norm{\tu^m_h}_{L^2} \norm{\Grad\tu_h^{m+1}}_{L^2}+\norm{\tu^{m+1}_h}_{L^2} \norm{\Grad\tu_h^{m}}_{L^2}\\
		&\quad\hphantom{\leq C \Delta t  \sum_{m=m_1}^{m_2} \Big(} + \norm{\Grad \tu^{m+1}_h}_{L^2(\dom)}+ 1+ \norm{f^m}_{ L^2(\dom)} \Big).
	\end{align*}
	We square this identity and use that $\Delta t (m_2-m_1) \leq \tau +\Delta t$,
	\begin{align*}
			\norm{\tu_h(t+\tau)-\tu_h(t)}_Z^2& \leq C (\tau+\Delta t)\Delta t  \sum_{m=m_1}^{m_2} \Big( \norm{\tu^m_h}_{L^2} \norm{\Grad\tu_h^{m+1}}_{L^2}+\norm{\tu^{m+1}_h}_{L^2} \norm{\Grad\tu_h^{m}}_{L^2}\\
			&\quad\hphantom{\leq C (\tau+\Delta t)\Delta t  \sum_{m=m_1}^{m_2} \Big( } + \norm{\Grad \tu^{m+1}_h}_{L^2(\dom)}+ 1+ \norm{f^m}_{ L^2(\dom)} \Big).
	\end{align*}
	and then integrate this identity over time to obtain, using the energy estimate, Lemma~\ref{lem:discenergy2},  
	\begin{equation*}
	\begin{split}
			&\int_0^{T-\tau}\norm{\tu_h(t+\tau)-\tu_h(t)}^2_Z  dt  \\
			& \leq C (\tau+\Delta t)\int_0^{T-\tau} \int_t^{t+\tau} \Big( \norm{\tu _h(s-\Delta t)}_{L^2}^2 \norm{\Grad\tu_h(s)}^2_{L^2}+\norm{\tu _h(s)}_{L^2}^2 \norm{\Grad\tu_h(s-\Delta t)}^2_{L^2}\\
			&\quad \hphantom{\leq C (\tau+\Delta t)\int_0^{T-\tau} \int_t^{t+\tau} \Big(}+ \norm{\Grad \tu_h}^2_{L^2(\dom)}+ 1+ \norm{f_h}^2_{ L^2(\dom)} \Big) ds dt\\
			& \leq C (\tau+\Delta t)\tau \int_0^{T-\tau} \Big( \norm{\tu _h(s-\Delta t)}_{L^2}^2 \norm{\Grad\tu_h(s)}^2_{L^2}+\norm{\tu _h(s)}_{L^2}^2 \norm{\Grad\tu_h(s-\Delta t)}^2_{L^2}\\
			&\quad \hphantom{\leq C (\tau+\Delta t)\int_0^{T-\tau}  \Big(}+ \norm{\Grad \tu_h}^2_{L^2(\dom)}+ 1+ \norm{f_h}^2_{ L^2(\dom)} \Big) ds \\
			& \leq C (\tau+\Delta t)\tau. 
	\end{split}
	\end{equation*}
\end{proof}

Next, we use Lemma~\ref{lem:lionslike} to show uniform time continuity of $\tu_h$ in $L^2([0,T]\times\dom)$ which we can then use together with Simon's version of the Aubin-Lions-Simon lemma~\cite[Theorem 1]{Simon1987}, stated in Theorem~\ref{thm:Simonlemma}, to conclude precompactness of the sequence $\{\tu_h\}_{h>0}$ in $L^2([0,T]\times\dom)$. The following lemma and its proof closely follow~\cite[Lemma 3.6]{Eymard2024}. We include it here both for completeness and to illustrate that it can be established using Lemma~\ref{lem:lionslike}, instead of the version employed in~\cite{Eymard2024}.
\begin{lemma}\label{lem:timecont}
	The approximations $\tu_h$ computed by the scheme~\eqref{eq:step1fully2}--\eqref{eq:projectionfullydiscrete2} satisfy
	\begin{equation}
		\label{eq:timecont2}
		\int_0^{T-\tau}\norm{\tu_h(t+\tau)-\tu_h(t)}_{L^2(\dom)}^2 dt \longrightarrow 0,\quad \text{as }\, \tau\to 0,
	\end{equation}
	uniformly with respect to $h,\Delta t>0$.
\end{lemma}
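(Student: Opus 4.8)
The plan is to split the time increment $w_\tau(t):=\tu_h(t+\tau)-\tu_h(t)$ into its $\PVh$-component and the orthogonal complement, controlling the first by the Ehrling-type inequality of Lemma~\ref{lem:lionslike} together with the weak time-continuity of Lemma~\ref{lem:timecontinuityutilde}, and showing the second is of order $\Delta t$ in $L^2([0,T]\times\dom)$. Since $\PVh$ is an orthogonal projection, for a.e.\ $t$ one has the Pythagorean identity $\norm{w_\tau(t)}_{L^2}^2=\norm{\PVh w_\tau(t)}_{L^2}^2+\norm{w_\tau(t)-\PVh w_\tau(t)}_{L^2}^2$, and I would estimate the two pieces separately.

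For the complementary part, recall from Remark~\ref{rem:PVhonUh} that $u_h^{m+1}=\PVh\tu_h^{m+1}$, so $\tu_h^{m+1}-u_h^{m+1}=\tu_h^{m+1}-\PVh\tu_h^{m+1}$ is orthogonal to $\Vh$, whereas $u_h^{m+1}-u_h^m\in\Vh$. Decomposing $\tu_h^{m+1}-u_h^m$ along these orthogonal directions gives $\norm{\tu_h^{m+1}-u_h^m}_{L^2}^2=\norm{\tu_h^{m+1}-u_h^{m+1}}_{L^2}^2+\norm{u_h^{m+1}-u_h^m}_{L^2}^2$, hence $\norm{\tu_h^{m+1}-u_h^{m+1}}_{L^2}\leq\norm{\tu_h^{m+1}-u_h^m}_{L^2}$. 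Summing over $m$ and invoking the energy estimate of Lemma~\ref{lem:discenergy2} in its Gr\"onwall form (Remark~\ref{rem:fneq0}) yields $\int_0^T\norm{\tu_h-\PVh\tu_h}_{L^2}^2\,dt=\Delta t\sum_m\norm{\tu_h^{m+1}-u_h^{m+1}}_{L^2}^2\leq C\Delta t$, and therefore $\int_0^{T-\tau}\norm{w_\tau-\PVh w_\tau}_{L^2}^2\,dt\leq C\Delta t$.

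For the projected part, since $w_\tau(t)\in H^1_0(\dom)$ I would apply Lemma~\ref{lem:lionslike} pointwise in $t$: for every $\eta>0$ there are $C_\eta,h_\eta>0$ so that for $h\leq h_\eta$, $\norm{\PVh w_\tau(t)}_{L^2}\leq\eta\norm{w_\tau(t)}_{H^1_0}+C_\eta\norm{w_\tau(t)}_Z$. Integrating in time and using the uniform bound $\int_0^{T-\tau}\norm{w_\tau}_{H^1_0}^2\,dt\leq 4\norm{\tu_h}_{L^2(0,T;H^1_0)}^2\leq C$ from the energy estimate together with $\int_0^{T-\tau}\norm{w_\tau}_Z^2\,dt\leq C\tau(\tau+\Delta t)$ from Lemma~\ref{lem:timecontinuityutilde} gives $\int_0^{T-\tau}\norm{\PVh w_\tau}_{L^2}^2\,dt\leq C\eta^2+C_\eta^2 C\tau(\tau+\Delta t)$. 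Adding the two contributions, for $h\leq h_\eta$ we obtain $\int_0^{T-\tau}\norm{w_\tau}_{L^2}^2\,dt\leq C\eta^2+C_\eta^2 C\tau(\tau+\Delta t)+C\Delta t$.

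To extract genuine uniformity as $\tau\to0$, given $\epsilon>0$ I would first fix $\eta$ so that $C\eta^2<\epsilon/3$, which fixes $C_\eta$ and $h_\eta$; since $\Delta t\le T$ the middle term is at most $C_\eta^2 C\tau(\tau+T)\to0$ as $\tau\to0$. The term $C\Delta t$ does not involve $\tau$, so it is absorbed using $\Delta t\to0$ along the discretization sequence: for the tail (indices with $\Delta t$ small and $h\le h_\eta$) all three terms fall below $\epsilon$ once $\tau$ is small, while each of the finitely many remaining members is a fixed piecewise-constant-in-time function and hence individually satisfies $\int_0^{T-\tau}\norm{w_\tau}_{L^2}^2\,dt\to0$ as $\tau\to0$ (its integrand is supported on a set of measure $O(\tau)$ with uniformly bounded values). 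Taking the minimal admissible $\tau$ over these finitely many members then gives~\eqref{eq:timecont2} uniformly. The main obstacle is precisely this $C\Delta t$ contribution, which appears because the incremental energy balance controls $\tu_h^{m+1}-u_h^m$ rather than $\tu_h^{m+1}-u_h^{m+1}$, together with the restriction $h\le h_\eta$ in the Ehrling inequality; both are resolved by the tail argument, which is legitimate since Lemma~\ref{lem:timecont} is used only through Simon's criterion (Theorem~\ref{thm:Simonlemma}) for the sequence with $h,\Delta t\to0$.
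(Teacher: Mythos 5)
Your proof is correct and follows essentially the same route as the paper: both arguments split $\tu_h(t+\tau)-\tu_h(t)$ into its $\PVh$-component, controlled by the Ehrling-type Lemma~\ref{lem:lionslike} together with the $Z$-norm time continuity of Lemma~\ref{lem:timecontinuityutilde}, and a complementary part of size $O(\sqrt{\Delta t})$ in $L^2([0,T]\times\dom)$ coming from the discrete energy estimate. Your deviations are only cosmetic refinements: the exact Pythagorean identity in place of the paper's add-and-subtract of $u_h(t+\tau)-u_h(t)$ with the triangle inequality; the orthogonality bound $\norm{\tu_h^{m+1}-u_h^{m+1}}_{L^2}\leq\norm{\tu_h^{m+1}-u_h^m}_{L^2}$ where the paper instead exploits that its staggered piecewise-constant interpolants give $\tu_h(t)-u_h(t)=\tu_h^{m+1}-u_h^m$ directly; and an explicit treatment of the finitely many family members with $\Delta t$ not yet small (needed for Simon's criterion), which the paper leaves implicit in its restriction $h<\min\{h_\eta,h_1\}$.
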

\begin{proof}
	We recall from Remark~\ref{rem:PVhonUh} that ${u}_h$ is given by the projection of $\tu_h$ onto $\Vh$, specifically ${u}_h^m = \PVh \tu_h^m$ for all $m\geq 0$ and therefore also that $\PVh u_h = u_h$. Then, we can write $\tu_h = \tu_h-u_h-\PVh \tu_h +\PVh u_h + \PVh \tu_h$ and estimate the quantity in~\eqref{eq:timecont2} as follows, using triangle inequality and that $\PVh$ is an orthogonal projection:
	\begin{align*}
	&\int_0^{T-\tau}\norm{\tu_h(t+\tau)-\tu_h(t)}_{L^2(\dom)}^2 dt\\
	& \leq 3\int_0^{T-\tau}\norm{\tu_h(t+\tau)-\tu_h(t)-(u_h(t+\tau)-u_h(t))}_{L^2(\dom)}^2\\
	&\quad  + 3\int_0^{T-\tau}\norm{\PVh( \tu_h(t+\tau)-\tu_h(t))-\PVh( u_h(t+\tau)-  u_h(t))}_{L^2(\dom)}^2dt \\
	&\quad +3\int_0^{T-\tau}\norm{\PVh( \tu_h(t+\tau)- \tu_h(t))}_{L^2(\dom)}^2 dt\\
	& \leq 6	\underbrace{\int_0^{T-\tau}\norm{\tu_h(t+\tau)-\tu_h(t) - ({u}_h(t+\tau)-{u}_h(t))}_{L^2(\dom)}^2 dt}_{(i)} \\
	&\quad +  3\underbrace{\int_0^{T-\tau}\norm{ \PVh({\tu}_h(t+\tau)- {\tu}_h(t))}_{L^2(\dom)}^2 dt}_{(ii)}.
	\end{align*}
	We let $\epsilon>0$ arbitrary and given. For the first term, we have, using that $\norm{\tu_h-{u}_h}_{L^2}^2\leq C \Delta t$  by (an adaption of) Lemma~\ref{lem:samelimits}, that there is $h_1>0$ (and corresponding $\Delta t $) small enough such that for any $0<h<h_1$, we have
	\begin{equation*}
		(i) \leq 2\int_0^{T-\tau}\norm{\tu_h(t+\tau)  - {u}_h(t+\tau) }_{L^2(\dom)}^2 dt + 2\int_0^{T}\norm{\tu_h(t)-{u}_h(t)}_{L^2(\dom)}^2 dt < \frac{\epsilon}{8}.
	\end{equation*}
	For the second term (ii), we use Lemma~\ref{lem:lionslike}   combined with the weak time continuity of $\tu_h$, Lemma~\ref{lem:timecontinuityutilde}, the discrete energy inequality, Lemma~\ref{lem:discenergy2} to estimate for any $\eta>0$ and any $0<h<\min\{h_\eta,h_1\}$,
	\begin{equation*}
	\begin{split}
			(ii)& \leq \eta \int_0^{T-\tau} \norm{\tu_h(t+\tau)-\tu_h(t)}_{H^1_0}^2 dt + C_\eta \int_0^{T-\tau}\norm{\tu_h(t+\tau)-\tu_h(t)}_Z^2 dt\\
			& \leq \eta C_{E_0,f} + C_\eta \int_0^{T-\tau}\norm{\tu_h(t+\tau)-\tu_h(t)}_Z^2 dt
	\end{split}
	\end{equation*}
	where $Z:= (H^1_{\Div}(\dom)\cap H^s(\dom))^*$. Now we first choose $\eta = \epsilon / (24C_{E_0,f})$ and then since we know from Lemma~\ref{lem:timecontinuityutilde} that the second term goes to zero as $\tau\to 0$, we have that for any $0<\tau<\bar{\tau}$ small enough that
	\begin{equation*}
		C_\eta\int_0^{T-\tau}\norm{\tu_h(t+\tau)-\tu_h(t )}_Z^2 dt \leq \frac{\epsilon}{24}.
	\end{equation*}
	Thus, combining these estimates, we obtain that for any
	 $0<h<\min\{h_\eta,h_1\}$ and $0<\tau< \bar{\tau}$, 
	\begin{equation*}
			\int_0^{T-\tau}\norm{\tu_h(t+\tau)-\tu_h(t)}_{L^2(\dom)}^2 dt<\epsilon,
	\end{equation*}
	 which proves the result since $\epsilon$ was arbitrary.
\end{proof}
The result of this lemma, combined with the uniform $L^2([0,T];H^1_0(\dom))$-bound on the $\tu_h$ coming from the energy estimate, Lemma~\ref{lem:discenergy2}, allow us to apply Simon's version of the Aubin-Lions-Simon lemma, Theorem~\ref{thm:Simonlemma}, to conclude that a subsequence of $\{\tu_h\}_{h>0}$ converges strongly in $L^2([0,T]\times\dom)$ to the limit $u$. By Lemma~\ref{lem:samelimits}, this implies that also $\{{u}_h\}_{h>0}$ has a strongly convergent subsequence to the same limit $u$. Then the fact that $u$ is a Leray-Hopf solution of the incompressible Navier-Stokes equations follows along the lines of Theorem~\ref{thm:convergence}, or one can also check that Step 2 of Theorem 3.8 in~\cite{Eymard2024} applies.

\section*{Acknowledgments}
I would like to thank Alexandre Chorin and Saleh Elmohamed for helpful discussions and providing useful references on this topic.

\appendix
\section{Discrete Gr\"onwall inequality}
The following standard lemma is useful for proving an energy bound in the case that the external source in the fluid equation is nonzero.
\begin{lemma}
	\label{lem:discretegronwall}
	Let $\{a_n\}_{n\in \N}\geq 0$, and $\{b_n\}_{n\in \N}\geq 0$ be two nonnegative sequences satisfying
	\begin{equation}\label{eq:assgronwall}
		a_n \leq (1+\mu \Delta t)(a_{n-1} +\beta b_{n-1}),
	\end{equation}
	for parameters $\mu,\Delta t, \beta\geq 0$.
	Then $a_n$ satisfies
	\begin{equation}\label{eq:discgronwall}
		a_n \leq \exp(\mu \Delta t n)a_0 +\beta \sum_{m=0}^{n-1}\exp(\mu\Delta t (n-m))b_m,
	\end{equation}
	and in particular,
	\begin{equation*}
		a_n \leq \exp(\mu \Delta t n)\left(a_0+\beta \sum_{m=0}^{n-1}\ b_m \right).
	\end{equation*}
\end{lemma}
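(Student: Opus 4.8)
The plan is to establish~\eqref{eq:discgronwall} by induction on $n$ and then obtain the ``in particular'' bound by majorizing the exponential weights. The only analytic input is the elementary inequality $1+x\leq e^x$, valid for every $x\geq 0$, which converts the factor $(1+\mu\Delta t)$ appearing in the hypothesis~\eqref{eq:assgronwall} into $\exp(\mu\Delta t)$.

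First I would check the base case $n=0$: here~\eqref{eq:discgronwall} reduces to $a_0\leq a_0$, the sum being empty, so it holds trivially. For the inductive step I assume~\eqref{eq:discgronwall} at index $n-1$. Applying~\eqref{eq:assgronwall} together with $1+\mu\Delta t\leq\exp(\mu\Delta t)$ gives
\[
	a_n\leq \exp(\mu\Delta t)\bigl(a_{n-1}+\beta b_{n-1}\bigr),
\]
and inserting the inductive hypothesis for $a_{n-1}$ yields
\[
	a_n\leq \exp(\mu\Delta t)\exp(\mu\Delta t(n-1))a_0 + \beta\sum_{m=0}^{n-2}\exp(\mu\Delta t)\exp(\mu\Delta t(n-1-m))b_m + \beta\exp(\mu\Delta t)b_{n-1}.
\]
The first term collapses to $\exp(\mu\Delta t n)a_0$, each summand becomes $\exp(\mu\Delta t(n-m))b_m$, and the trailing term is precisely the missing $m=n-1$ entry since $\exp(\mu\Delta t)=\exp(\mu\Delta t(n-(n-1)))$. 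Collecting these reproduces~\eqref{eq:discgronwall} at index $n$, which closes the induction.

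Finally, I would deduce the simplified estimate by bounding each exponential weight in the sum from above by the largest one: for $0\leq m\leq n-1$ one has $n-m\leq n$, hence $\exp(\mu\Delta t(n-m))\leq\exp(\mu\Delta t n)$. Factoring $\exp(\mu\Delta t n)$ out of both the leading term and the sum then gives $a_n\leq \exp(\mu\Delta t n)\bigl(a_0+\beta\sum_{m=0}^{n-1}b_m\bigr)$. I do not expect any genuine obstacle here; the only point demanding care is the index bookkeeping in the inductive step, namely confirming that the contribution generated from $b_{n-1}$ occupies exactly the top index of the telescoped sum.
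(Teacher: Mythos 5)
Your proof is correct and follows essentially the same route as the paper: induction on $n$ using the elementary bound $1+\mu\Delta t\leq\exp(\mu\Delta t)$, with the ``in particular'' statement obtained by majorizing each weight $\exp(\mu\Delta t(n-m))$ by $\exp(\mu\Delta t n)$. The index bookkeeping for the $b_{n-1}$ term matches the paper's argument exactly, so there is nothing to add.
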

\begin{proof}
	We can prove this by induction on $n$. Clearly, it is true for $n=0$. For the induction step, we assume the statement holds for $a_{n-1}$ and deduce that it also holds for $a_n$. Thus, we take~\eqref{eq:assgronwall} for $a_n$ and plug in~\eqref{eq:discgronwall} for $n-1$:
	\begin{align*}
		a_n & \leq (1+\mu \Delta t)(a_{n-1} +\beta b_{n-1})\\
		& \leq (1+\mu \Delta t)\left(\exp(\mu \Delta t (n-1))a_0 +\beta \sum_{m=0}^{n-2}\exp(\mu\Delta t (n-1-m))b_m+\beta b_{n-1}\right)\\
		& \leq \exp(\mu\Delta t )\left(\exp(\mu \Delta t (n-1))a_0 +\beta \sum_{m=0}^{n-1}\exp(\mu\Delta t (n-1-m))b_m\right)\\
		& \leq\exp(\mu \Delta t n)a_0 +\beta \sum_{m=0}^{n-1}\exp(\mu\Delta t (n-m))b_m,
	\end{align*} 
	where we used that for positive $x$, we have $1+x\leq \exp(x)$. The second statement follows by upper bounding $\exp(\mu\Delta t(n-m))$ by $\exp(\mu\Delta t n)$ in the sum above.
\end{proof}

\section{Aubin-Lions-Simon lemma}
\label{app:ALSlemma}

We recall Simon's version of the Aubin-Lions-Simon lemma~\cite[Theorem 1]{Simon1987}:
\begin{theorem}[Simon's compactness criterion~\cite{Simon1987}]
	\label{thm:Simonlemma}
	Let $F\subset L^p(0,T;B)$ where $1\leq p<\infty$ and $B$ is a Banach space. Then $F$ is relatively compact in $L^p(0,T;B)$ if and only if the following two conditions are satisfied:
	\begin{enumerate}
		\item $\left\{\int_{t_1}^{t_2} f(t) dt \, : \, f\in F\right\}$ is relatively compact in $B$ for all $0<t_1<t_2<T$,  \label{cond1} 
		
		\item $\norm{f(\cdot + \tau)-f}_{L^p(0,T-\tau;B)}\longrightarrow 0$ as $\tau\to 0$, uniformly for $f\in F$.   \label{cond2}
	\end{enumerate}
	
\end{theorem}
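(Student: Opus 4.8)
The plan is to prove the two implications of the equivalence separately, the necessity of \eqref{cond1} and \eqref{cond2} being the routine direction. For \eqref{cond1}, note that for fixed $0<t_1<t_2<T$ the map $f\mapsto\int_{t_1}^{t_2}f\,dt$ is a bounded linear operator from $L^p(0,T;B)$ into $B$, since by Hölder's inequality $\norm{\int_{t_1}^{t_2}f\,dt}_B\le (t_2-t_1)^{1/p'}\norm{f}_{L^p(0,T;B)}$ with $p'$ the conjugate exponent. A bounded linear operator is continuous and therefore maps the relatively compact set $F$ to a relatively compact subset of $B$. For \eqref{cond2}, translation continuity $\norm{f(\cdot+\tau)-f}_{L^p(0,T-\tau;B)}\to 0$ holds for each fixed $f$ by density of step functions in $L^p(0,T;B)$; uniformity over $F$ then follows from a finite-net argument: cover $F$ by finitely many balls of radius $\eta/3$ centered at $f_1,\dots,f_n$, and use that the shift $S_\tau f=f(\cdot+\tau)$ has operator norm at most $1$ to dominate $\norm{S_\tau f-f}_{L^p}$ by $\tfrac{2\eta}{3}+\max_i\norm{S_\tau f_i-f_i}_{L^p}$.

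For the sufficiency direction I would use that $L^p(0,T;B)$ is complete (as $B$ is Banach and $p<\infty$), so it suffices to show that $F$ is totally bounded. The main tool is the time-average operator
\[
	f_\epsilon(t)=\frac1\epsilon\int_t^{t+\epsilon}f(s)\,ds,\qquad t\in(0,T-\epsilon).
\]
The first key property, coming from \eqref{cond2}, is a uniform approximation estimate: writing $f_\epsilon(t)-f(t)=\frac1\epsilon\int_0^\epsilon\bigl(f(t+\sigma)-f(t)\bigr)d\sigma$, Jensen's inequality and Fubini's theorem give
\[
	\norm{f_\epsilon-f}_{L^p(0,T-\epsilon;B)}\le \sup_{0<\sigma\le\epsilon}\norm{f(\cdot+\sigma)-f}_{L^p(0,T-\sigma;B)},
\]
which tends to $0$ as $\epsilon\to0$ uniformly for $f\in F$ by \eqref{cond2}.

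The second key property, the heart of the argument, is that for each fixed $\epsilon>0$ and each interior interval $[\alpha,\beta]\subset(0,T)$ with $\beta+\epsilon<T$, the family $\{f_\epsilon|_{[\alpha,\beta]}:f\in F\}$ is relatively compact in $C([\alpha,\beta];B)$, which I would establish via the Arzelà–Ascoli theorem. Pointwise relative compactness in $B$ is exactly \eqref{cond1} applied with $t_1=t$, $t_2=t+\epsilon$. Equicontinuity follows from the identity $f_\epsilon(t')-f_\epsilon(t)=\frac1\epsilon\int_t^{t'}\bigl(f(s+\epsilon)-f(s)\bigr)ds$ together with Hölder's inequality, yielding
\[
	\norm{f_\epsilon(t')-f_\epsilon(t)}_B\le \frac{|t'-t|^{1/p'}}{\epsilon}\,\norm{f(\cdot+\epsilon)-f}_{L^p(0,T-\epsilon;B)},
\]
whose right-hand factor is uniformly bounded over $f\in F$ by \eqref{cond2}. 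Since $C([\alpha,\beta];B)$ embeds continuously into $L^p(\alpha,\beta;B)$, this makes $\{f_\epsilon\}$ totally bounded in $L^p(\alpha,\beta;B)$. Combining the two properties through a triangle inequality then shows that $F$ lies within any prescribed $\eta>0$ of a totally bounded family in the $L^p$ norm, hence is totally bounded and therefore relatively compact.

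I expect the main obstacle to be the behavior near the endpoints $t=0$ and $t=T$: the average $f_\epsilon$ is only defined on $(0,T-\epsilon)$ and \eqref{cond1} is stated for intervals strictly inside $(0,T)$, so the interior compactness must be patched across an exhaustion $\alpha\downarrow0$, $\beta\uparrow T$, while simultaneously showing that the mass of $F$ near the endpoints is uniformly small, i.e.\ $\sup_{f\in F}\bigl(\int_0^\sigma\norm{f}_B^p\,dt+\int_{T-\sigma}^T\norm{f}_B^p\,dt\bigr)\to0$ as $\sigma\to0$. I would extract this uniform tail control from \eqref{cond2} by extending each $f$ by zero outside $(0,T)$ and estimating the box-average on all of $\R$: the extension introduces precisely those two endpoint integrals as extra terms in the translation difference, and \eqref{cond2} forbids mass from concentrating at the endpoints, forcing them to vanish uniformly. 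Handling this tail carefully, and then assembling the interior Arzelà–Ascoli compactness with the uniform approximation estimate into a genuine total-boundedness statement on all of $(0,T)$, is the technically delicate part of the proof.
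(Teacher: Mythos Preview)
The paper does not prove this theorem: it is stated in the appendix as a recalled result from Simon~\cite{Simon1987} and used as a black box, with no proof given. Your outline is essentially the standard argument from Simon's original paper --- necessity via continuity of the averaging map and a finite-net argument for uniform translation continuity, sufficiency via Arzel\`a--Ascoli applied to the mollified family $f_\epsilon$ combined with uniform approximation $f_\epsilon\to f$ --- and you have correctly identified the endpoint-tail control as the technically delicate step. There is no alternative approach in the paper to compare against.
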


\bibliographystyle{abbrv}
\bibliography{projection}

\begin{thebibliography}{10}

\bibitem{Achdou2000}
Y.~Achdou and J.-L. Guermond.
\newblock Convergence analysis of a finite element
  projection/{L}agrange-{G}alerkin method for the incompressible
  {N}avier-{S}tokes equations.
\newblock {\em SIAM J. Numer. Anal.}, 37(3):799--826, 2000.

\bibitem{Arnold1984}
D.~N. Arnold, F.~Brezzi, and M.~Fortin.
\newblock A stable finite element for the {S}tokes equations.
\newblock {\em Calcolo}, 21(4):337--344 (1985), 1984.

\bibitem{Bell1989}
J.~B. Bell, P.~Colella, and H.~M. Glaz.
\newblock {A second-order projection method for the incompressible
  navier-stokes equations}.
\newblock {\em Journal of Computational Physics}, 85(2):257--283, 1989.

\bibitem{Boffi2013}
D.~Boffi, F.~Brezzi, and M.~Fortin.
\newblock {\em Mixed Finite Element Methods and Applications}.
\newblock Springer Series in Computational Mathematics. Springer Berlin
  Heidelberg, 2013.

\bibitem{Boyer2013}
F.~Boyer and P.~Fabrie.
\newblock {Navier--Stokes equations for homogeneous fluids}.
\newblock {\em {Mathematical Tools for the Study of the Incompressible
  Navier-Stokes Equations and Related Models}}, pages 345--407, 2013.

\bibitem{Chorin1967}
A.~J. Chorin.
\newblock The numerical solution of the {N}avier-{S}tokes equations for an
  incompressible fluid.
\newblock {\em Bull. Amer. Math. Soc.}, 73:928--931, 1967.

\bibitem{Chorin1968}
A.~J. Chorin.
\newblock Numerical solution of the {N}avier-{S}tokes equations.
\newblock {\em Math. Comp.}, 22:745--762, 1968.

\bibitem{Douglas1975}
J.~Douglas, Jr., T.~Dupont, and L.~Wahlbin.
\newblock The stability in {$L^{q}$} of the {$L^{2}$}-projection into finite
  element function spaces.
\newblock {\em Numer. Math.}, 23:193--197, 1974/75.

\bibitem{E2002}
W.~E and J.-G. Liu.
\newblock Projection method. {III}. {S}patial discretization on the staggered
  grid.
\newblock {\em Math. Comp.}, 71(237):27--47, 2002.

\bibitem{Ern2004}
A.~Ern and J.-L. Guermond.
\newblock {\em Theory and practice of finite elements}, volume 159 of {\em
  Applied Mathematical Sciences}.
\newblock Springer-Verlag, New York, 2004.

\bibitem{Ern2021}
A.~Ern and J.-L. Guermond.
\newblock {\em Finite elements {I}---{A}pproximation and interpolation},
  volume~72 of {\em Texts in Applied Mathematics}.
\newblock Springer, Cham, 2021.

\bibitem{Eymard2024}
R.~Eymard and D.~Maltese.
\newblock Convergence of the incremental projection method using conforming
  approximations.
\newblock {\em Computational Methods in Applied Mathematics}, 24(3):623--647,
  2024.

\bibitem{Gallouet2023}
T.~Gallou\"et, R.~Herbin, J.~C. Latch\'e, and D.~Maltese.
\newblock Convergence of the fully discrete incremental projection scheme for
  incompressible flows.
\newblock {\em J. Math. Fluid Mech.}, 25(3):Paper No. 63, 26, 2023.

\bibitem{Girault1986}
V.~Girault and P.-A. Raviart.
\newblock {\em Finite element methods for {N}avier-{S}tokes equations},
  volume~5 of {\em Springer Series in Computational Mathematics}.
\newblock Springer-Verlag, Berlin, 1986.
\newblock Theory and algorithms.

\bibitem{Goda1979}
K.~Goda.
\newblock A multistep technique with implicit difference schemes for
  calculating two- or three-dimensional cavity flows.
\newblock {\em Journal of Computational Physics}, 30(1):76--95, 1979.

\bibitem{Guermond1996}
J.-L. Guermond.
\newblock Some implementations of projection methods for {N}avier-{S}tokes
  equations.
\newblock {\em RAIRO Mod\'{e}l. Math. Anal. Num\'{e}r.}, 30(5):637--667, 1996.

\bibitem{Guermond2006}
J.~L. Guermond, P.~Minev, and J.~Shen.
\newblock An overview of projection methods for incompressible flows.
\newblock {\em Comput. Methods Appl. Mech. Engrg.}, 195(44-47):6011--6045,
  2006.

\bibitem{Guermond1998}
J.-L. Guermond and L.~Quartapelle.
\newblock {On the approximation of the unsteady Navier--Stokes equations by
  finite element projection methods}.
\newblock {\em Numerische Mathematik}, 80:207--238, 1998.

\bibitem{Hopf1951}
E.~Hopf.
\newblock \"uber die {A}nfangswertaufgabe f\"ur die hydrodynamischen
  {G}rundgleichungen.
\newblock {\em Math. Nachr.}, 4:213--231, 1951.

\bibitem{Hou1992}
T.~Y. Hou and B.~T.~R. Wetton.
\newblock Convergence of a finite difference scheme for the {N}avier-{S}tokes
  equations using vorticity boundary conditions.
\newblock {\em SIAM J. Numer. Anal.}, 29(3):615--639, 1992.

\bibitem{Hou1993}
T.~Y. Hou and B.~T.~R. Wetton.
\newblock Second-order convergence of a projection scheme for the
  incompressible {N}avier-{S}tokes equations with boundaries.
\newblock {\em SIAM J. Numer. Anal.}, 30(3):609--629, 1993.

\bibitem{Kuroki2020}
H.~Kuroki and K.~Soga.
\newblock On convergence of {C}horin's projection method to a {L}eray-{H}opf
  weak solution.
\newblock {\em Numer. Math.}, 146(2):401--433, 2020.

\bibitem{Leray1934}
J.~Leray.
\newblock Sur le mouvement d'un liquide visqueux emplissant l'espace.
\newblock {\em Acta Math.}, 63(1):193--248, 1934.

\bibitem{Maeda2022}
M.~Maeda and K.~Soga.
\newblock More on convergence of {C}horin's projection method for
  incompressible {N}avier-{S}tokes equations.
\newblock {\em J. Math. Fluid Mech.}, 24(2):Paper No. 41, 31, 2022.

\bibitem{Shen1996}
J.~Shen.
\newblock On error estimates of the projection methods for the
  {N}avier-{S}tokes equations: second-order schemes.
\newblock {\em Math. Comp.}, 65(215):1039--1065, 1996.

\bibitem{Simon1987}
J.~Simon.
\newblock Compact sets in the space {$L^p(0,T;B)$}.
\newblock {\em Ann. Mat. Pura Appl. (4)}, 146:65--96, 1987.

\bibitem{Temam1968}
R.~Temam.
\newblock Une m\'ethode d'approximation de la solution des \'equations de
  {N}avier-{S}tokes.
\newblock {\em Bull. Soc. Math. France}, 96:115--152, 1968.

\bibitem{Temam1969}
R.~T\'{e}mam.
\newblock Sur l'approximation de la solution des \'{e}quations de
  {N}avier-{S}tokes par la m\'{e}thode des pas fractionnaires. {II}.
\newblock {\em Arch. Rational Mech. Anal.}, 33:377--385, 1969.

\bibitem{Temam1977}
R.~Temam.
\newblock {\em Navier-{S}tokes equations. {T}heory and numerical analysis}.
\newblock Studies in Mathematics and its Applications, Vol. 2. North-Holland
  Publishing Co., Amsterdam-New York-Oxford, 1977.

\bibitem{vanKan1986}
J.~van Kan.
\newblock A second-order accurate pressure-correction scheme for viscous
  incompressible flow.
\newblock {\em SIAM Journal on Scientific and Statistical Computing},
  7(3):870--891, 1986.

\end{thebibliography}

\end{document}